\newcommand{\Ahat}{\hat{A}}
\newcommand{\Bhat}{\hat{B}}
\newcommand{\bw}{{\textstyle \bigwedge}}
\renewcommand{\d}{\delta}
\newcommand{\ce}{\mathcal{E}}
\newcommand{\cet}{\widetilde{\ce}}
\newcommand{\Cth}{Carath\'{e}odory}
\newcommand{\D}{\Delta}
\newcommand{\dt}{\tilde{d}}
\newcommand{\dT}{d_\mathrm{T}}
\newcommand{\eval}[2]{\left. #1 \right|_{#2}}
\newcommand{\Fb}{\fz{}{2}}
\newcommand{\Fm}{\fz{}{m}}
\newcommand{\Fmb}{\fz{2}{m}}
\newcommand{\Fmk}{\fz{k}{m}}
\newcommand{\fz}[2]{\mbox{$\overset{\hspace{0.1em}\scriptscriptstyle\o}{\raisebox{0ex}[1.2ex]{\phantom{T}}}\hspace{-0.7em}T^{#1}_{#2}$}}
\newcommand{\g}{\gamma}
\newcommand{\ghat}{\hat{\g}}
\newcommand{\GL}{\mathrm{GL}}
\newcommand{\gl}{\mathfrak{gl}}
\newcommand{\gt}{\tilde{\g}}
\newcommand{\id}{\mathrm{id}}
\newcommand{\iT}{i_{\mathrm{T}}}
\newcommand{\J}{\mathcal{J}}
\newcommand{\jbar}{\bar{\jmath}}
\renewcommand{\L}{\Lambda}
\newcommand{\Lie}{\mathcal{L}}
\renewcommand{\o}{\mathrm{o}}
\newcommand{\p}{\partial}
\newcommand{\pd}[2]{\frac{\p #1}{\p #2}}
\newcommand{\pdb}[3]{\frac{\p #1}{\p #2 \, \p #3}}
\newcommand{\pde}[3]{\eval{\pd{#1}{#2}}{#3}}
\newcommand{\pdeb}[4]{\eval{\pdb{#1}{#2}{#3}}{#4}}
\newcommand{\pdx}[4]{\frac{\p^{#1}{#2}}{\p #3 \cdots \p #4}}
\newcommand{\pdxa}[5]{\frac{\p^{#1}{#2}}{\p #3 \cdots \p #4 \, \p #5}}
\newcommand{\phibar}{\bar{\phi}}
\newcommand{\phihat}{\hat{\phi}}
\newcommand{\psihat}{\hat{\psi}}
\newcommand{\R}{\mathbb{R}}
\newcommand{\Sg}{\mathfrak{S}}
\newcommand{\tr}{\mbox{\textsc{t}}}
\newcommand{\Tht}{\widetilde{\Theta}}
\newcommand{\udot}{\dot{u}}
\newcommand{\varphit}{\tilde{\varphi}}
\newcommand{\ve}{\varepsilon}
\newcommand{\vf}[1]{\frac{\partial}{\partial #1}}
\newcommand{\vfe}[2]{\eval{\vf{#1}}{#2}}
\newcommand{\vt}{\vartheta}
\def\hook{\kern 3pt \vrule height 0pt depth 0.4pt width 3pt
\vrule height 5pt depth 0.4pt\kern 3pt}
\newtheorem{theorem}{Theorem}
\newtheorem{prop}[theorem]{Proposition}
\newtheorem{lem}[theorem]{Lemma}
\newtheorem{cor}[theorem]{Corollary}
\newcommand{\art}[6]{#1: #2 {\it #3\/} {\bf #4} (#5) #6}
\newcommand{\book}[4]{#1: {\it #2\/} (#3, #4)}
\title{Homogeneous variational problems: a minicourse}
\author
{D.\,J.\ Saunders}
\begin{document}

\maketitle

\abstract{A Finsler geometry may be understood as a homogeneous variational problem, where the Finsler function is the Lagrangian. The extremals in Finsler geometry are curves, but in more general variational problems we might consider extremal submanifolds of dimension $m$. In this minicourse we discuss these problems from a geometric point of view.\\[2ex]
\textbf{MSC:} 35A15, 58A10, 58A20\\
\textbf{Keywords:} calculus of variations, parametric problems
}


\section{Introduction}

This paper is a written-up version of the major part of a minicourse given at the sixth Bilateral Workshop on Differential Geometry and its Applications, held in Ostrava in May~2011. Much of the discussion at these workshops is on Finsler geometry, where the interest is in variational problems defined on tangent manifolds by a `Finsler function', a smooth function defined on the slit tangent manifold (excluding the zero section) and satisfying certain homogeneity and nondegeneracy properties. The extremals of such problems are geometric curves in the original (base) manifold, without any particular parametrization but with an orientation.

For this particular workshop it was felt that it might be worthwhile to describe slightly more general problems, looking at variational problems where the extremals were submanifolds of dimension $m$, but where the action function still depended upon no more than the first derivatives of the submanifold~\cite{GH96},\cite{Run73}; for example, minimal surface problems would be included in this description. This minicourse introduces a version of the geometric background needed to express such problems, in terms of velocity manifolds. There is an alternative approach to such problems involving manifolds of contact elements (quotients of velocity manifolds); we refer to this only briefly, when we consider the action of the jet group.

Although we consider only first order variational problems, we nevertheless need to use second order velocities:\ for instance, the Euler-Lagrange equations for first order variational problems are second-order differential equations. We do this in a slightly unusual way, looking at a particular submanifold of the double velocity manifold. Having done this, we look at some geometrical and cohomological constructions, before obtaining a version of the first variation formula for variational problems with fixed boundary conditions. The final part of the minicourse, which considered various concepts of regularity, has been omitted from this paper for reasons of space; the concepts described may be found in a recent paper~\cite{CS09}. We give only a few other references:\ \cite{KMS93} provides extensive background material on various types of jet manifold and the actions of the jet groups; \cite{Sau09} introduces in a more general context the type of cohomological approach we use these types of variational problem; and~\cite{Sau10}, with a philosophy similar to that of the present paper, compares these problems with those defined on jets of sections of fibrations. 

I should like to thank the organisers of the Workshop for inviting me to give this course. I acknowledge the  support of grant no.\  201/09/0981 for Global Analysis and its Applications from the Czech Science Foundation; grant no.\ MEB 041005 for Finsler structures and the Calculus of Variations ; and also the joint IRSES project GEOMECH (EU FP7, nr 246981).


\section{Velocities}

In this section we see how to construct manifolds of first order and second order velocities, and also how certain groups, the jet groups, act on these manifolds.


\subsection{First order velocities}

Let $E$ be a connected, paracompact, Hausdorff manifold of class $C^\infty$ and of finite dimension $n$; let $O \subset \R^m$ (with $m < n$) be open and connected, with $0 \in O$. A map $\g : O \to E$ will be called an \emph{$m$-curve in $E$}. The $1$-jet $j^1_0\g$ of $\g$ at zero will be called a \emph{velocity} (or \emph{$m$-velocity}), and the set $T_m E = \{j^1_0\g\}$ of velocities of all $m$-curves in $E$ will be called the \emph{velocity (or $m$-velocity) manifold of $E$}. We map $T_m E$ to $E$ by
\[
\tau_{mE} : T_m E \to E \, , \qquad \tau_{mE}(j^1_0\g) = \g(0) \, .
\]

We shall show that $T_m E$ really is a manifold (and is connected, paracompact and Hausdorff, and indeed is a vector bundle over $E$) by identifying it with the Whitney sum over $E$ of $m$ copies of the tangent manifold $TE$.
\begin{lem}
\label{L1}
There is a canonical identification $T_m E \cong \bigoplus^m TE$.
\end{lem}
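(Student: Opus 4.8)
The plan is to construct an explicit fibred bijection and then transport the smooth vector-bundle structure of $\bigoplus^m TE$ back to $T_m E$. The starting observation is that a $1$-jet at $0$ records exactly two pieces of data: the base point $\g(0)$ and the differential $d\g_0 : T_0 O \to T_{\g(0)} E$; indeed $j^1_0\g = j^1_0\gt$ precisely when $\g(0) = \gt(0)$ and $d\g_0 = d\gt_0$. Identifying $T_0 O$ with $\R^m$ and using its standard basis $e_1, \dots, e_m$, a linear map out of $\R^m$ is the same as the $m$-tuple of images of the basis vectors. I would therefore define
\[
\Phi : T_m E \to \bigoplus^m TE \, , \qquad \Phi(j^1_0\g) = \bigl( d\g_0(e_1), \dots, d\g_0(e_m) \bigr) \, ,
\]
noting that the $m$ vectors $d\g_0(e_i)$ all lie in the single fibre $T_{\g(0)} E$, so the target really is the Whitney sum; by construction $\Phi$ covers the identity on $E$, i.e.\ it intertwines $\tau_{mE}$ with the Whitney-sum projection. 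Because the definition uses only the intrinsic standard basis of $\R^m$ and no chart on $E$, the resulting identification is canonical.

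Next I would check that $\Phi$ is a bijection. Well-definedness and injectivity are immediate from the characterisation of $1$-jets above: two $m$-curves have the same velocity iff they share base point and differential, which is exactly the condition that $\Phi$ returns the same tuple. For surjectivity, given a tuple $(w_1, \dots, w_m)$ of vectors based at a common point $p$, I would realise it by an explicit representative: choose a chart $(U, x^a)$ about $p$, write each $w_i = w_i^a\, \p/\p x^a$ in these coordinates, and take the $m$-curve $\g$ determined on a small ball $O$ by $x^a \circ \g(t) = x^a(p) + t^i w_i^a$. Then $\g(0) = p$ and $d\g_0(e_i) = w_i$, whence $\Phi(j^1_0\g) = (w_1, \dots, w_m)$.

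It remains to promote the set-bijection to a diffeomorphism. Since $\bigoplus^m TE$ is already known to be a smooth, connected, paracompact, Hausdorff manifold and a rank-$mn$ vector bundle over $E$, I would simply declare $\Phi$ to be an isomorphism of all these structures and thereby define the manifold (and bundle) structure on $T_m E$. To confirm this structure is the intended one and is coordinate-independent, I would write out the induced fibre coordinates: a chart $x^a$ on $E$ gives functions $u^a_i$ on $T_m E$ recording the components of $d\g_0(e_i)$, and under $\Phi$ these match the natural vector-bundle coordinates on $\bigoplus^m TE$ slot by slot, with transition functions that are just $m$ copies of the tangent-bundle transitions and hence smooth and fibrewise linear. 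The one step genuinely deserving care — and the only place the argument could go wrong — is the first: one must be sure that the jet equivalence relation captures the pair $(\g(0), d\g_0)$ exactly, neither forgetting information nor retaining higher-order data, for this is what makes $\Phi$ both well-defined and injective. Once that is pinned down, everything else is the routine transport of structure along a bijection.
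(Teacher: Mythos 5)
Your proposal is correct and is essentially the paper's own argument: the map $j^1_0\g \mapsto \bigl( d\g_0(e_1), \ldots, d\g_0(e_m) \bigr)$ coincides with the paper's $j^1_0\g \mapsto \bigl( j^1_0(\g \circ i_1), \ldots, j^1_0(\g \circ i_m) \bigr)$, since $j^1_0(\g \circ i_k) = d\g_0(e_k)$. You simply spell out the well-definedness, surjectivity, and transport-of-structure steps that the paper leaves implicit.
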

\begin{proof}
Let $i_k : \R \to \R^m$ be the inclusion $i_k(s) = (0, \ldots, 0, s, 0, \ldots, 0)$. Then each $\g \circ i_k$ is a curve in $E$, and the map
\[
j^1_0\g \mapsto \bigl( j^1_0(\g \circ i_1), \ldots, j^1_0(\g \circ i_m) \bigr)
\]
is a bijection $T_m E \to \bigoplus^m TE$ preserving the fibration over $E$.
\end{proof}
\begin{cor}
\label{C2}
Let $\{dt^i\}$ be the canonical basis of $\R^{m*}$; then
\[
T_m E \to TE \otimes \R^{m*} \, , \qquad (\xi_1, \ldots, \xi_m) \mapsto \xi_i \otimes dt^i
\] 
is a vector bundle isomorphism. \qed
\end{cor}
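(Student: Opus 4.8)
The plan is to invoke Lemma~\ref{L1} to identify $T_m E$ with $\bigoplus^m TE$, reducing the statement to showing that the map $\bigoplus^m TE \to TE \otimes \R^{m*}$ given fibrewise by $(\xi_1, \ldots, \xi_m) \mapsto \xi_i \otimes dt^i$ (with the summation convention) is a vector bundle isomorphism over $E$. Here $\R^{m*}$ is a fixed $m$-dimensional vector space, so $TE \otimes \R^{m*}$ denotes the tensor product of $TE$ with the trivial bundle $E \times \R^{m*}$, whose fibre over $x$ is $T_x E \otimes \R^{m*}$.

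The heart of the matter is a purely algebraic observation, which I would state first: for any vector space $V$ and any vector space $W$ carrying a chosen basis $\{e^1, \ldots, e^m\}$, the map $\bigoplus^m V \to V \otimes W$, $(v_1, \ldots, v_m) \mapsto \sum_i v_i \otimes e^i$, is a linear isomorphism. It is manifestly linear; it is surjective because the elementary tensors $v \otimes e^i$ span $V \otimes W$; and it is injective because, $\{e^i\}$ being a basis, every element of $V \otimes W$ admits a unique expression of this form. Applying this fibrewise with $V = T_x E$ and $W = \R^{m*}$ equipped with its basis $\{dt^i\}$ shows that our map restricts to a linear isomorphism on each fibre.

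Finally I would verify that these fibrewise isomorphisms constitute a smooth vector bundle morphism covering the identity on $E$. This is immediate: the map is by construction fibre-preserving and fibrewise linear, and in any local trivialization of $TE$---which induces trivializations of both $\bigoplus^m TE$ and $TE \otimes \R^{m*}$---it is expressed by the single constant linear isomorphism of the previous paragraph, and so is smooth. Since a smooth, fibre-preserving, fibrewise-linear map that is a fibrewise isomorphism and covers the identity is a vector bundle isomorphism, the proof concludes. I do not anticipate any genuine obstacle here: the corollary merely records that tensoring with a fixed vector space equipped with a distinguished basis is the same operation as forming the associated direct sum, so it is essentially a tensorial paraphrase of Lemma~\ref{L1}. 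The only point needing attention is the confirmation of smoothness, and even that reduces to the remark that the fibrewise map is constant in suitable trivializing coordinates.
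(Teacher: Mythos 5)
Your argument is correct and follows exactly the route the paper intends: the corollary is stated with no proof beyond the \qed, being regarded as an immediate consequence of Lemma~\ref{L1} together with the standard fibrewise identification of $\bigoplus^m V$ with $V \otimes W$ once a basis of the $m$-dimensional space $W$ is fixed. Your proposal simply makes explicit the algebraic isomorphism and the smoothness check that the paper leaves tacit.
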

If $(U; u^a)$ is a chart on $E$ then $(U^1; u^a, u^a_i)$ is a chart on $T_m E$, where
\[
U^1 = \tau_{mE}^{-1}(U) \, , \qquad u^a_i(j^1_0\g) = D_i \g^a (0) = D_i (u^a \circ \g) (0) \, . 
\]
If $j^1_0\g = (\xi_1, \ldots, \xi_m)$ then it is clear that $u^a_i(j^1_0\g) = \udot^a(\xi_i)$. The rule for changing coordinates on $T_m E$ is therefore
\[
v^b_i(j^1_0\g) = \pde{v^b}{u^a}{\g(0)} u^a_i(j^1_0\g) \, .
\]
We can see from this that the superscript $a$ labeling the coordinate function $u^a_i$ depends on the original choice of chart $u^a$ on $E$, whereas the subscript $i$ is independent of this choice and so is the index of a component of the velocity (namely, the tangent vector $\xi_i$). We call indices of this latter type \emph{counting indices} rather than \emph{coordinate indices}.

We shall be particularly interested in the subsets of $T_m E$ containing those velocities $j^1_0\g$ where the $m$-curve $\g$ has certain properties. Write $\Fm E$ for the subset
\[
\{ j^1_0\g \in T_m E : \g \text{ is an immersion near zero} \} \, ;
\]
if $j^1_0\g = (\xi_1, \ldots, \xi_m)$ and $j^1_0\g \in \Fm E$ then $\{ \xi_1, \ldots, \xi_m \}$ will be linearly independent. An element of $\Fm E \subset T_m  E$ will be called a \emph{regular velocity}.
\begin{prop}
The regular velocities form an open-dense submanifold.
\end{prop}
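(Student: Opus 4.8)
The plan is to work entirely in a chart and reduce the statement to an elementary fact about matrices. Using a chart $(U^1; u^a, u^a_i)$ on $T_m E$ as constructed above, a velocity $j^1_0\g = (\xi_1, \ldots, \xi_m)$ lying over $U$ is encoded by the $n \times m$ matrix $(u^a_i)$ whose $i$-th column records the components $\udot^a(\xi_i)$ of the tangent vector $\xi_i$. The vectors $\xi_1, \ldots, \xi_m$ are linearly independent precisely when this matrix has rank $m$ (which is possible since $m < n$), so in the chart $\Fm E$ is exactly the set of velocities whose coordinate matrix has full column rank. I would first note that this condition does not depend on the chart: the change-of-coordinates rule $v^b_i = (\partial v^b / \partial u^a)\, u^a_i$ multiplies the matrix on the left by the Jacobian $(\partial v^b / \partial u^a)$, which is invertible, and left multiplication by an invertible matrix preserves rank. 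Hence the subsets defined chartwise glue to a well-defined subset of $T_m E$, and it suffices to establish openness and density within each chart.

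For openness, I would use the fact that an $n \times m$ matrix has rank $m$ if and only if at least one of its $m \times m$ minors is nonzero. Each such minor is a polynomial, hence a continuous function, in the coordinates $u^a_i$; the locus where a given minor is nonzero is therefore open, and the set of regular velocities in the chart is the union of these finitely many open sets, so it is open. Since an open subset of a manifold is an embedded submanifold of the same dimension, this simultaneously disposes of the submanifold claim.

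For density, I would argue that the complementary set of non-regular velocities in the chart, being the locus where the rank is strictly less than $m$, is contained in the common zero set of all the $m \times m$ minors, and in particular in the zero set of the single minor given by the determinant of the top $m \times m$ block of $(u^a_i)$. This determinant is a nonzero polynomial in the $nm$ variables $u^a_i$, so its zero set is a proper algebraic subvariety of $\R^{nm}$ and therefore has empty interior. Consequently the regular velocities are dense in the chart, and since the charts cover $T_m E$ they are dense in $T_m E$.

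The argument is essentially routine; the only point deserving real attention is the verification that the regularity condition is chart-independent, so that the local pictures assemble into a genuine subset of $T_m E$ rather than merely a family of unrelated coordinate conditions. The hypothesis $m < n$ is used twice: it guarantees that full rank $m$ is attainable, so that $\Fm E$ is nonempty, and it ensures that the chosen minor is a nonconstant polynomial, which is precisely what makes the degeneracy locus a proper subvariety and hence drives the density argument.
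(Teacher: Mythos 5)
Your argument is correct and is essentially the paper's own: openness comes from the nonvanishing of the $m \times m$ minors of the coordinate matrix $(u^a_i)$ (which the paper packages invariantly as the continuous fibred map $(\xi_1,\ldots,\xi_m) \mapsto \xi_1 \wedge \cdots \wedge \xi_m$ into $\bigwedge^m TE$, whose fibre components are precisely those minors), and density comes from observing that the non-regular locus lies inside the zero set of the top $m \times m$ determinant, a nonzero polynomial in the fibre coordinates. The only substantive difference is at the last step: where you cite the general fact that a nonzero polynomial cannot vanish on a nonempty open set, the paper proves this directly for that determinant $f$ by exhibiting the constant mixed partial $\p^m f / \p u^1_1 \, \p u^2_2 \cdots \p u^m_m = 1$, which would have to vanish on any open set on which $f$ vanished.
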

\begin{proof}
To show that $\Fm E$ is open in $T_m E$, define the map $\wedge : T_m E \to \bigwedge^m TE$ by $(\xi_1, \ldots, \xi_m) \mapsto \xi_1 \wedge \cdots \wedge \xi_m$. Then
\begin{itemize}
\item The map $\wedge$ is fibred over the identity on $E$ and is continuous (it is polynomial in the fibre coordinates $u^a_i$);
\item $j^1_0\g \in \Fm E$ exactly when $\wedge(j^1_0\g) \ne 0$;
\item the zero section of $\bigwedge^m TE$ is closed.
\end{itemize}
To show that $\Fm E$ is dense in $T_m E$, define the map $f : U^1 \to \R$ by $f(j^1_0\g) = \det \bigl( u^j_i(j^1_0\g) \bigr)$, where $(u^j_i)$ is the $m \times m$ submatrix containing the first $m$ rows of the $n \times m$ matrix $u^a_i$. If $j^1_0\g \in O \subset U^1$ where $O$ is open and $O \cap \Fm E = \emptyset$ then $f$ vanishes on $O$. But
\[
\eval{\frac{\p^m f}{\p u^1_1 \, \p u^2_2 \cdots \p u^m_m}}{j^1_0\g} = 1 \, .
\]
\end{proof}


\subsection{Second order velocities}

We define a \emph{second-order $m$-velocity} in the same way as a 2-jet at zero of an $m$-curve, and write
\[
T^2_m E = \{j^2_0\g \} \, , \qquad
\Fmb E = \{j^2_0\g : \g \text{ is an immersion near zero} \} \, .
\]
We also let $\tau^2_{mE} : T^2_m E \to E$,  $\tau^{2,1}_{mE} : T^2_m E \to T_m E$ be the projections
\[
\tau^2_{mE}(j^2_0\g) = \g(0) \, , \qquad \tau^{2,1}_{mE}(j^2_0\g) = j^1_0\g \, .
\]
We take charts on $T^2_m E$ to be $(U^2; u^a, u^a_i, u^a_{ij})$ where $U^2 = (\tau^2_{mE})^{-1}(U)$ and
\[
u^a_i(j^2_0\g) = D_i \g^a(0) \, , \qquad u^a_{ij}(j^2_0\g) = D_i D_j \g^a(0)
\]
so that $u^a_{ij} = u^a_{ji}$ (this constraint will cause complications in certain coordinate formul\ae). These charts form an atlas such that $T^2_m E$ becomes a manifold with the standard properties. We shall not demonstrate this directly; we shall show instead that it may be identified with a closed submanifold of a larger manifold, the manifold of double velocities.


\subsection{Double velocities}

We know that $T_m E$ is a manifold, so it has its own velocity manifold
\[
T_{m^\prime}T_m E = \{ j^1_0\gt \}
\]
where $\gt$ is an $m^\prime$-curve in $T_m E$. This is the \emph{$(m^\prime,m)$ double velocity manifold}. Charts on $T_{m^\prime}T_m E$ are therefore
\[
\bigl( (U^1)^1; u^a, u^a_i, u^a_{;j}, u^a_{i;j} \bigr) \, ,
\]
where $1 \le i \le m$ and $1 \le j \le m^\prime$, corresponding to the charts $(U^1; u^a, u^a_i)$ on $T_m E$. In most applications we have either $m^\prime = m$ or $m^\prime = 1$. We shall be interested in a particular submanifold of double velocities, known as holonomic double velocities.


\subsection{Holonomic double velocities}

If $\g$ is an $m$-curve in $E$ then its \emph{prolongation} is the $m$-curve $\jbar^1\g$ in $T_m E$ where
\[
\jbar^1\g(t) = j^1_0(\g \circ \tr_t)
\]
and $\tr_t : \R^m \to \R^m$ is the translation map $\tr_t(s) = t + s$. Thus $j^1_0 \jbar^1\g \in T_m T_m E$. We use the notation $\jbar^1\g$ rather than $j^1\g$; the latter would be a map satisfying $j^1\g(t) = j^1_t\g$ whose codomain would be a set containing jets at arbitrary points of $\R^m$ rather than just at zero.
\begin{prop}
\label{P4}
The map
\[
\iota : T^2_m E \to T_m T_m E \, , \qquad \iota(j^2_0\g) = j^1_0 \jbar^1\g
\]
is an injection. Its image is the submanifold described in coordinates by
\[
u^a_i = u^a_{;i} \, , \qquad u^a_{i;j} = u^a_{j;i} \, .
\]
The image of the chart $(U^2; u^a, u^a_i, u^a_{ij})$ under the injection is the restriction of the chart $\bigl( (U^1)^1; u^a, u^a_i, u^a_{;j}, u^a_{i;j} \bigr)$ to the submanifold.
\end{prop}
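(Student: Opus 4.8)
The plan is to work in an adapted chart and compute, for a point $\iota(j^2_0\g)$, each of the four families of coordinate functions in terms of the derivatives of $\g$ at $0$; everything then follows by comparison with the coordinates of $j^2_0\g$. Writing $\gt = \jbar^1\g$, the base point is $\gt(0) = j^1_0(\g\circ\tr_0) = j^1_0\g$, so at once $u^a(\iota(j^2_0\g)) = \g^a(0)$ and $u^a_i(\iota(j^2_0\g)) = D_i\g^a(0)$. For the fibre coordinates I would evaluate the composites $u^a\circ\gt$ and $u^a_i\circ\gt$ as functions on $O$: since $(\g\circ\tr_t)^a(s)=\g^a(t+s)$, one finds $u^a(\gt(t)) = \g^a(t)$ and $u^a_i(\gt(t)) = D_i\g^a(t)$, and differentiating at $0$ gives
\[
u^a_{;j}(\iota(j^2_0\g)) = D_j\g^a(0), \qquad u^a_{i;j}(\iota(j^2_0\g)) = D_jD_i\g^a(0) \, .
\]
Comparing with the chart $(U^2;u^a,u^a_i,u^a_{ij})$ this reads $u^a\circ\iota=u^a$, $u^a_i\circ\iota=u^a_i$, $u^a_{;j}\circ\iota=u^a_j$ and $u^a_{i;j}\circ\iota=u^a_{ij}$. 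In particular the right-hand sides depend only on $j^2_0\g$, so $\iota$ is well defined; and since $j^2_0\g$ is recovered from these coordinates ($u^a_{ij}=u^a_{i;j}\circ\iota$, etc.), $\iota$ is injective.

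Next I would identify the image with the subset $S$ cut out by $u^a_i=u^a_{;i}$ and $u^a_{i;j}=u^a_{j;i}$. The inclusion $\Im\iota\subseteq S$ is read straight off the formulae: $u^a_i(\iota(j^2_0\g))=D_i\g^a(0)=u^a_{;i}(\iota(j^2_0\g))$, while the symmetry $u^a_{ij}=u^a_{ji}$ built into $T^2_mE$ (equality of mixed partials) forces $u^a_{i;j}(\iota(j^2_0\g))=u^a_{j;i}(\iota(j^2_0\g))$. For the converse inclusion $S\subseteq\Im\iota$ I would construct a preimage: working in a chart about the base point of a given $z\in S$, with coordinates $c^a,c^a_i,c^a_{;j},c^a_{i;j}$, define the polynomial $m$-curve $\g^a(t)=c^a+c^a_i t^i+\half c^a_{i;j}\,t^it^j$ (valid for small $t$). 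The defining equations $c^a_i=c^a_{;i}$ and the symmetry $c^a_{i;j}=c^a_{j;i}$ ensure that the second-order data of $\g$ at $0$ match those of $z$, whence $\iota(j^2_0\g)=z$.

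Finally, the chart statement is immediate from the coordinate representation: $\iota$ carries $U^2$ bijectively onto $(U^1)^1\cap S$, and on the image the four target coordinate functions become $u^a,\,u^a_i,\,u^a_{;j}=u^a_j,\,u^a_{i;j}$ (symmetric), reproducing exactly the chart $(U^2;u^a,u^a_i,u^a_{ij})$. The step I expect to be the real content is the converse inclusion $S\subseteq\Im\iota$: it succeeds precisely because the symmetry condition $u^a_{i;j}=u^a_{j;i}$ is the \emph{holonomy} (integrability) constraint needed to realise the prescribed second-order data by an honest $m$-curve. Without that condition the data would describe a non-holonomic double velocity with no underlying second-order velocity, so this is exactly where the geometry — rather than routine chain-rule bookkeeping — enters.
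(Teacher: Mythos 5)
Your proposal is correct and follows essentially the same route as the paper: compute the four coordinate families of $\iota(j^2_0\g)$ to get injectivity and the inclusion of the image in the constraint set, then realise any point of that set as $j^1_0\jbar^1\g$ for the quadratic $m$-curve $\g^a(t)=c^a+c^a_i t^i+\tfrac{1}{2}c^a_{i;j}t^it^j$. The only cosmetic difference is that the paper phrases injectivity by taking two curves with equal images and showing their $2$-jets coincide, which is the same argument as your observation that $j^2_0\g$ is recoverable from the image coordinates.
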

\begin{proof}
Suppose $\g_1$, $\g_2$ are two $m$-curves in $E$ such that $j^1_0 \jbar^1\g_1 = j^1_0 \jbar^1\g_2$. Then for $\g_1$
\begin{align*}
u^a(j^2_0\g_1) & = u^a(\g_1(0)) = u^a(\jbar^1\g_1(0)) = u^a(j^1_0 \jbar^1\g_1) \, ; \\
u^a_i(j^2_0\g_1) & = D_i(u^a \circ \g_1)(0) = u^a_i(\jbar^1\g_1(0)) = u^a_i(j^1_0 \jbar^1\g_1) \, ; \\
u^a_{ij}(j^2_0\g_1) & = D_i D_j(u^a \circ \g_1)(0) = D_i(u^a_j \circ \jbar^1\g_1)(0) = u^a_{j;i}(j^1_0 \jbar^1\g_1)
\end{align*}
and similarly for $\g_2$, so that $j^2_0\g_1 = j^2_0\g_2$ and the map is an injection.

For any $m$-curve $\g$ in $E$
\[
u^a_{;i}(j^1_0 \jbar^1\g) = D_i(u^a \circ \jbar^1\g)(0) = D_i(u^a \circ \g)(0) = u^a_i(\jbar^1\g(0)) = u^a_i(j^1_0 \jbar^1\g)
\]
and
\[
u^a_{j;i}(j^1_0 \jbar^1\g) = D_i(u^a_j \circ \jbar^1\g)(0) = D_i(D_j(u^a \circ \g))(0)
\]
so that $u^a_i = u^a_{;i}$ and $u^a_{i;j} = u^a_{j;i}$ when restricted to the image of the injection.

Furthermore, if $\gt$ is an $m$-curve in $T_m E$ satisfying
\[
u^a_i(j^1_0\gt) = u^a_{;i}(j^1_0\gt) \, , \qquad u^a_{i;j}(j^1_0\gt) = u^a_{j;i}(j^1_0\gt)
\]
then the $m$-curve $\g$ in $E$ given in coordinates near $\tau_{mE}(\gt(0))$ by
\[
\g^a(t) = u^a(j^1_0\gt) + u^a_i(j^1_0\gt) t^i + \tfrac{1}{2} u^a_{i;j}(j^1_0\gt) t^i t^j
\]
so that $j^1_0 \jbar^1\g = j^1_0\gt$; thus the image of the injection is described locally by the equations $u^a_i = u^a_{;i}$, $u^a_{i;j} = u^a_{j;i}$ and is therefore a submanifold of $T_m T_m E$.

The relationship between the charts $(U^2; u^a, u^a_i, u^a_{ij})$ and $\bigl( (U^1)^1; u^a, u^a_i, u^a_{;j}, u^a_{i;j} \bigr)$ is immediate.
\end{proof}
The image of $T^2_m E$ in $T_m T_m E$ is called the submanifold of \emph{holonomic} double velocities. There is no canonical projection $T_m T_m E \to T^2_m E$; we may, however, consider a tubular neighbourhood $\nu : N \to T^2_m E$ of $T^2_m E$ in $T_m T_m E$, and then the condition $\nu\circ\iota = \id_{T^2_m E}$ (where $\iota : T^2_m E \to T_m T_m E$ is the injection) gives rise to the constraints
\begin{align*}
\pd{\nu^a}{u^c} & = \d^a_c \, , &
\pd{\nu^a}{u^c_p} + \pd{\nu^a}{u^c_{;p}} & = 0 \, , &
\pd{\nu^a}{u^c_{p;q}} + \pd{\nu^a}{u^c_{q;p}} & = 0 \\[3ex]
\pd{\nu^a_i}{u^c} & = 0 \, , &
\pd{\nu^a_i}{u^c_p} + \pd{\nu^a_i}{u^c_{;p}} & = \d^a_c \d^p_i \, , &
\pd{\nu^a_i}{u^b_{p;q}} + \pd{\nu^a_i}{u^b_{q;p}} & = 0 \\[3ex]
\pd{\nu^a_{ij}}{u^c} & = 0 \, , &
\pd{\nu^a_{ij}}{u^c_p} + \pd{\nu^a_{ij}}{u^c_{;p}} & = 0 \, , &
\pd{\nu^a_{ij}}{u^c_{p;q}} + \pd{\nu^a_{ij}}{u^c_{q;p}} & = \d^a_c (\d^p_i \d^q_j + \d^p_j \d^q_i) \, .
\end{align*}
for the coordinates of $\nu$, and hence to the conditions
\begin{align*}
d\nu^a & = du^a + \pd{\nu^a}{u^c_p} (du^c_p - du^c_{;p}) + \tfrac{1}{2} \pd{\nu^a}{u^c_{p;q}} (du^c_{p;q} - du^c_{q;p}) \\
d\nu^a_i & = \tfrac{1}{2} (du^a_i + du^a_{;i})
+ \tfrac{1}{2} \biggl( \pd{\nu^a_i}{u^c_p} - \pd{\nu^a_i}{u^c_{;p}} \biggr) (du^c_p - du^c_{;p}) 
+ \tfrac{1}{2} \pd{\nu^a_i}{u^c_{p;q}} (du^c_{p;q} - du^c_{q;p}) \\
d\nu^a_{ij} 
& = \tfrac{1}{2} (du^a_{i;j} +  du^a_{j;i}) + \pd{\nu^a_{ij}}{u^c_p} (du^c_p - du^c_{;p})
+ \tfrac{1}{2} \pd{\nu^a_{ij}}{u^c_{p;q}} (du^c_{p;q} - du^c_{q;p}) \, .
\end{align*}
We shall use these conditions later on.


\subsection{The exchange map}

There is another way of describing the submanifold of holonomic velocities.

A map $\psi : O^\prime \times O \to E$, where $O \subset \R^m$, $O^\prime \subset \R^{m^\prime}$  are open and connected, and where $0_{\R^m} \in O$ and $0_{\R^{m^\prime}} \in O^\prime$, is called a \emph{double $(m^\prime,m)$-curve}. For each $s\in O^\prime$
\[
\psi_s : O \to E \, , \qquad \psi_s(t) = \psi(s,t)
\]
is then an $m$-curve in $E$, so that $j^1_0\psi_s \in T_m E$. Thus
\[
j^1_0 (s \mapsto j^1_0\psi_s) \in T_{m^\prime} T_m E \, .
\]
\begin{lem}
\label{L5}
The \emph{exchange map} $e : T_{m^\prime} T_m E \to T_m T_{m^\prime}E$ is well-defined by $\psi \mapsto \psihat$ where $\psihat(t,s) = \psi(s,t)$ and is a smooth bijection.
\end{lem}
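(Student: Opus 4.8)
The plan is to reduce everything to a computation in the natural charts, where I expect the exchange of arguments to appear as nothing more than a relabelling of fibre coordinates together with the symmetry of mixed second derivatives. First I would unwind the definitions to read off the coordinates of the source point. If $\psi$ is a double $(m^\prime,m)$-curve and $\psi^a = u^a\circ\psi$, then differentiating through the definitions of the jet coordinates on $T_{m^\prime}T_m E$ gives, for the point $j^1_0(s\mapsto j^1_0\psi_s)$,
\[
u^a = \psi^a(0,0),\quad u^a_i = \pde{\psi^a}{t^i}{(0,0)},\quad u^a_{;j} = \pde{\psi^a}{s^j}{(0,0)},\quad u^a_{i;j} = \pdeb{\psi^a}{t^i}{s^j}{(0,0)},
\]
with $1\le i\le m$ and $1\le j\le m^\prime$. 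Thus a point of $T_{m^\prime}T_m E$ records precisely the value of any representing double curve together with its first $t$-derivatives, first $s$-derivatives, and mixed second derivatives at the origin.

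Next I would apply the identical computation to the double $(m,m^\prime)$-curve $\psihat$, recalling that $\psihat(t,s)=\psi(s,t)$. In the analogous adapted chart on $T_m T_{m^\prime}E$ the inner velocity coordinate now carries the $m^\prime$-index and the prolongation the $m$-index, and the same four families of derivatives reappear with the roles of $t$ and $s$ interchanged. Using the equality of mixed partials $\pdeb{\psi^a}{t^i}{s^j}{(0,0)} = \pdeb{\psi^a}{s^j}{t^i}{(0,0)}$, these target coordinates are determined by exactly the four families listed above: in coordinates $e$ sends the point $(u^a,u^a_i,u^a_{;j},u^a_{i;j})$ to the point with the same $u^a$, with $u^a_i$ and $u^a_{;j}$ interchanged, and with $u^a_{i;j}$ becoming $u^a_{j;i}$. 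Because these coordinates depend on $\psi$ only through the recorded derivatives, two double curves with the same image in $T_{m^\prime}T_m E$ produce the same image in $T_m T_{m^\prime}E$, so $e$ is well defined; and every point of $T_{m^\prime}T_m E$ is realised by the polynomial double curve $\psi^a(s,t)=u^a+u^a_i t^i+u^a_{;j}s^j+u^a_{i;j}t^i s^j$, exactly as in the proof of Proposition~\ref{P4}, so $e$ is defined on all of $T_{m^\prime}T_m E$.

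Smoothness is then immediate, since the coordinate expression of $e$ is a fixed linear isomorphism between the fibre-coordinate spaces of the two adapted atlases. For bijectivity I would observe that the construction is symmetric: the exchange map $T_m T_{m^\prime}E \to T_{m^\prime}T_m E$ built in the same way is a two-sided inverse, because exchanging the two arguments twice recovers the original double curve; hence $e$ is a (smooth) bijection, indeed a diffeomorphism. The main obstacle is the well-definedness step — verifying that the entire double curve enters only through the finitely many derivatives stored in the point of $T_{m^\prime}T_m E$, and that these match the target data. The crux of this is precisely the symmetry of the second mixed partial derivatives, which is what makes the interchange of $t$ and $s$ consistent at the level of jets.
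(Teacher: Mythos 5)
Your proposal is correct and follows essentially the same route as the paper: compute the four families of jet coordinates of $j^1_0(s\mapsto j^1_0\psi_s)$ in terms of derivatives of $\psi$ at the origin, repeat for $\psihat$, and conclude well-definedness, smoothness from the (linear) coordinate formul\ae, and bijectivity from the reverse exchange being a two-sided inverse (the paper phrases this as $e$ being an involution). Your explicit appeal to the symmetry of mixed partials makes precise a step the paper leaves implicit in ``carrying out the same calculation for $\psihat$'', which is a worthwhile clarification but not a different argument.
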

\begin{proof}
The element of $T_m T_m E$ defined by $\psi$ satisfies
\begin{align*}
u^a(j^1_0 (s \mapsto j^1_0\psi_s)) & = u^a(j^1_0\psi_0) = \psi^a_0(0) = \psi^a(0,0) \, , \\
u^a_i(j^1_0 (s \mapsto j^1_0\psi_s)) & = u^a_i(j^1_0\psi_0) = D_i(u^a \circ \psi_0)(0) = D_{2:i} \psi^a(0,0) \, , \\
u^a_{;j}(j^1_0 (s \mapsto j^1_0\psi_s)) & = D_j(u^a \circ (s \mapsto j^1_0\psi_s))(0) = D_j(s \mapsto \psi^a_s)(0) 
= D_{1:j} \psi^a(0,0) \, , \\
u^a_{i;j}(j^1_0 (s \mapsto j^1_0\psi_s)) & = D_j(u^a_i \circ (s \mapsto j^1_0\psi_s))(0) \\
& \qquad  = D_j(s \mapsto D_i \psi^a_s(0))(0) \, ,
= D_{1:j}D_{2:i} \psi^a(0,0)
\end{align*}
and carrying out the same calculation for $\psihat$ shows that $e$ is a well-defined injection. It is clearly an involution, and hence is a bijection. The coordinate formul\ae
\[
u^a \circ e = u^a \, , \qquad u^a_i \circ e = u^a_{;i} \, , \qquad u^a_{;j} \circ e = u^a_j \, , \qquad u^a_{i;j} \circ e = u^a_{j;i}
\]
show that it is smooth.
\end{proof}
\begin{prop}
The holonomic submanifold of $T_m T_m E$ is the fixed point set of the exchange map.
\end{prop}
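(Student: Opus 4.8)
The plan is to exploit the coordinate description of the exchange map $e$ established in Lemma~\ref{L5}, together with the coordinate description of the holonomic submanifold established in Proposition~\ref{P4}, and simply compare the two sets of equations. First I would note that the fixed point set of $e$ is the set of points $p \in T_m T_m E$ (here $m^\prime = m$) satisfying $u^a(e(p)) = u^a(p)$ and likewise for each of the fibre coordinates $u^a_i$, $u^a_{;j}$, $u^a_{i;j}$. Reading off the coordinate formul\ae\ $u^a \circ e = u^a$, $u^a_i \circ e = u^a_{;i}$, $u^a_{;j} \circ e = u^a_j$, and $u^a_{i;j} \circ e = u^a_{j;i}$, the first equation is automatic, the second and third each give $u^a_i = u^a_{;i}$, and the last gives $u^a_{i;j} = u^a_{j;i}$.

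Next I would observe that these are precisely the equations $u^a_i = u^a_{;i}$ and $u^a_{i;j} = u^a_{j;i}$ which, by Proposition~\ref{P4}, cut out the holonomic submanifold $\iota(T^2_m E)$ in $T_m T_m E$. Hence a point lies in the fixed point set of $e$ if and only if it lies in the holonomic submanifold, which is the desired conclusion.

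The main thing to be careful about, rather than a genuine obstacle, is the role of symmetry in the indices. The coordinate $u^a_{i;j}$ on $T_m T_m E$ carries an \emph{unsymmetrised} pair of counting indices: $i$ arises from the inner velocity and $j$ from the outer one, and a priori $u^a_{i;j} \ne u^a_{j;i}$ on the full double velocity manifold. The fixed-point condition $u^a_{i;j} = u^a_{j;i}$ therefore genuinely imposes the symmetry that, under the identification $\iota$ of Proposition~\ref{P4}, corresponds to the constraint $u^a_{ij} = u^a_{ji}$ on $T^2_m E$. I would make this compatibility explicit so that it is clear the two index symmetrisations match up correctly, and that no spurious extra equations appear. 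Since $e$ is a smooth involution (Lemma~\ref{L5}), its fixed point set is automatically a closed submanifold, consistent with the submanifold structure already obtained in Proposition~\ref{P4}; I would remark on this only briefly, as the coordinate comparison already delivers the identification.
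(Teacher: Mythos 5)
Your argument is correct and is essentially the paper's own proof: the paper simply declares the result ``immediate from the coordinate formul\ae\ for $e$'', and you have spelled out exactly that comparison between the fixed-point equations of $e$ and the defining equations $u^a_i = u^a_{;i}$, $u^a_{i;j} = u^a_{j;i}$ of the holonomic submanifold from Proposition~\ref{P4}. Your additional remark about the unsymmetrised indices on $T_m T_m E$ is a helpful clarification but does not change the route.
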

\begin{proof}
This is immediate from the coordinate formul\ae\ for $e$.
\end{proof}


\subsection{Jet groups}

If we consider $m$-curves in $\R^m$ rather than in some other manifold, then we have the possibility of composing two such $m$-curves. If we insist that the origin must map to itself then the composition will always exist, although possibly with a smaller domain then the domains of the two original $m$-curves. We shall want the jets of these $m$-curves to have inverses, so that the curves themselves will need to be immersions near zero; it is convenient to assume that they are, in fact, diffeomorphisms onto their images.

So let $O \subset \R^m$ be open and connected with $0 \in O$, and let $\phi : O \to \phi(O) \subset \R^m$ be a diffeomorphism with $\phi(0) = 0$. The \emph{first and second order jet groups} are
\[
L^1_m = \{ j^1_0\phi \} \, , \qquad
L^2_m = \{ j^2_0\phi \} \, .
\]
The products for $L^1_m$ and $L^2_m$ are given by
\[
j^1_0\phi_1 \cdot j^1_0\phi_2 = j^1_0(\phi_1 \circ \phi_2) \, , \qquad
j^2_0\phi_1 \cdot j^2_0\phi_2 = j^2_0(\phi_1 \circ \phi_2) \, .
\]
\begin{lem}
The product rules define group structures on $L^1_m$ and $L^2_m$.
\end{lem}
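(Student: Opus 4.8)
The plan is to verify the group axioms directly for each of $L^1_m$ and $L^2_m$, the one point requiring genuine care being well-definedness of the products; associativity, the existence of an identity, and the existence of inverses will then follow almost formally from the corresponding properties of composition of maps.

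First I would dispose of well-definedness, which is the main obstacle. Since the products are specified using representative diffeomorphisms $\phi_1, \phi_2$, I must check that $j^k_0(\phi_1 \circ \phi_2)$ depends only on $j^k_0\phi_1$ and $j^k_0\phi_2$ for $k = 1, 2$. The composite $\phi_1 \circ \phi_2$ is defined on a neighbourhood of $0$ because $\phi_2(0) = 0$ lies in the domain of $\phi_1$. For $k = 1$ this is the chain rule: $D(\phi_1 \circ \phi_2)(0) = D\phi_1(0)\, D\phi_2(0)$, using $\phi_2(0) = 0$, so the first-order jet of the composite is determined by the first derivatives of the factors at zero; indeed $j^1_0\phi \mapsto D\phi(0)$ identifies $L^1_m$ with $\GL(m, \R)$ and the product with matrix multiplication. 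For $k = 2$ I would differentiate the chain rule once more to express $D_i D_j (\phi_1 \circ \phi_2)^a(0)$ in terms of the first and second partial derivatives of $\phi_1$ and $\phi_2$ at zero (the symmetry in $i, j$ being automatic); again only the $2$-jets of the factors enter. More conceptually, this is the standard fact that the Taylor polynomial of a composite to order $k$ is computed from those of the factors to order $k$, so composition descends to jets.

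Closure is then immediate, since $\phi_1 \circ \phi_2$ is again a diffeomorphism near zero fixing the origin, so its jet lies in $L^k_m$. Associativity follows from associativity of map composition, as both sides of the associative law equal $j^k_0(\phi_1 \circ \phi_2 \circ \phi_3)$. The identity element is $j^k_0\id$, because $\phi \circ \id = \id \circ \phi = \phi$ near zero gives $j^k_0\phi \cdot j^k_0\id = j^k_0\id \cdot j^k_0\phi = j^k_0\phi$.

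Finally, for inverses I would use that each representative $\phi$ is a diffeomorphism onto its image with $\phi(0) = 0$; hence a smooth local inverse $\phi^{-1}$ exists on a neighbourhood of $0$ and satisfies $\phi^{-1}(0) = 0$, so $j^k_0\phi^{-1}$ is again an element of $L^k_m$. Then $\phi \circ \phi^{-1} = \phi^{-1} \circ \phi = \id$ near zero yields $j^k_0\phi \cdot j^k_0\phi^{-1} = j^k_0\phi^{-1} \cdot j^k_0\phi = j^k_0\id$, exhibiting the inverse, while well-definedness ensures the class $j^k_0\phi^{-1}$ depends only on $j^k_0\phi$. This completes the verification of the axioms.
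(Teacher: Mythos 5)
Your proof is correct and takes essentially the same route as the paper's: well-definedness via the first- and second-order chain rule, associativity inherited from composition of maps, identity $j^k_0(\id_{\R^m})$, and inverses obtained from the local inverse diffeomorphism $\phi^{-1}$, which fixes the origin. You supply slightly more detail (e.g.\ noting that $\phi_1\circ\phi_2$ is defined near zero because $\phi_2(0)=0$, and checking the identity and inverse laws on both sides), but there is no substantive difference in approach.
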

\begin{proof}
The products are well-defined because the first (or second) derivatives of a composite depend only upon the first (or second) derivatives of the individual maps, by the first (or second) order chain rule; sssociativity of the products is inherited from that of composition. The diffeomorphism $\id_{\R^m}$ satisfies
\[
j^1_0(\id_{\R^m}) \cdot j^1_0\phi = j^1_0(\id_{\R^m} \circ \phi) = j^1_0\phi \, ;
\]
the map $\phibar : \phi(O) \to O$ given by $\phibar = \phi^{-1}$ satisfies $\phibar(0) = 0$, and 
\[
j^1_0\phibar \cdot j^1_0\phi = j^1_0(\phibar \circ \phi) = j^1_0(\id_O) = j^1_0(\id_{\R^m}) \, .
\]
Similar formul\ae\ hold for second-order jets.
\end{proof}
The map $L^1_m \to \R^{m^2}$, $j^1_0\phi \mapsto \bigl( D_j \phi^i(0) \bigr)$ defines global coordinates on $L^1_m$, and identifies it with $\GL(m,\R)$. The map $L^2_m \to \R^{m^2(m+3)/2}$, $j^2_0\phi \mapsto \bigl( D_j \phi^i(0), D_j D_k \phi^i(0) \bigr)$ defines global coordinates on $L^2_m$. Writing
\[
A^i_j = D_j \phi^i(0) \, , \qquad B^i_{jk} = D_j D_k \phi_i(0)
\]
where $\det A^i_j \ne 0$ because $\phi$ is a diffeomorphism, the product rule in $L^1_m$ is
\[
(A\Ahat)^i_j = A^i_h \Ahat^h_j
\]
and the product rule in $L^2_m$ is
\begin{align*}
\bigl( (A,B)(\Ahat,\Bhat) \bigr)^i_j & = A^i_h \Ahat^h_j \, , \\
\bigl( (A,B)(\Ahat,\Bhat) \bigr)^i_{jk} & = A^i_l \Bhat^l_{jk} + B^i_{hl} \Ahat^h_j \Ahat^l_k \, ,
\end{align*}
the latter formula arising from the second order chain rule
\begin{align*}
D_j D_k (\phi\phihat)^i(0) & = D_j(D_l\phi^i \circ \phihat) D_k\phihat^l)(0) \\ & 
= D_l\phi^i(0) D_j D_k \phihat^l(0) +  D_h D_l \phi^i(0) D_j\phihat^h(0) D_k\phihat^l(0) 
\end{align*}
using $\phi(0) = \phihat(0) = 0$.
\begin{cor}
The groups $L^1_m$ and $L^2_m$ are Lie groups. \qed
\end{cor}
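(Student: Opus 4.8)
The plan is to verify the two defining properties of a Lie group directly in global coordinates: that each of $L^1_m$ and $L^2_m$ is a smooth manifold, and that multiplication and inversion are smooth. The manifold structure is already available, since the coordinate maps constructed above identify $L^1_m$ with $\GL(m,\R)$ and $L^2_m$ with the subset $\{(A,B) : \det A \ne 0, \ B^i_{jk} = B^i_{kj}\}$ of $\R^{m^2(m+3)/2}$; the latter is an open subset (the condition $\det A \ne 0$ is open and the symmetry of $B$ cuts out a linear subspace) and so is trivially a smooth manifold, Hausdorff and second countable, covered by this single chart. The group axioms were established in the preceding lemma. Hence the entire task reduces to checking smoothness of the two operations in these charts.

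For $L^1_m$ this is classical. The product rule $(A\Ahat)^i_j = A^i_h\Ahat^h_j$ is polynomial in the coordinates and therefore smooth, while inversion is given by Cramer's rule as a rational function whose denominator is $\det A$, which is nowhere zero on $\GL(m,\R)$. Thus $L^1_m$ carries exactly the standard Lie group structure of the general linear group.

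For $L^2_m$ I would argue in the same spirit. Both product formulas displayed above, for $\bigl((A,B)(\Ahat,\Bhat)\bigr)^i_j$ and for $\bigl((A,B)(\Ahat,\Bhat)\bigr)^i_{jk}$, are polynomial in the entries of $(A,B)$ and $(\Ahat,\Bhat)$, so multiplication is smooth. For inversion I would solve $(A,B)(\Ahat,\Bhat) = (\d^i_j, 0)$ for the inverse coordinates: the first equation forces $\Ahat = A^{-1}$, and substituting this into the second gives the linear system $A^i_l \Bhat^l_{jk} = -B^i_{hl}\Ahat^h_j\Ahat^l_k$, with solution $\Bhat^l_{jk} = -(A^{-1})^l_i B^i_{hp}(A^{-1})^h_j(A^{-1})^p_k$. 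Since the entries of $A^{-1}$ are rational in those of $A$ with denominator $\det A \ne 0$, the inverse coordinates depend smoothly on $(A,B)$, and inversion is smooth.

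The only step requiring genuine work is the inversion in $L^2_m$: one must exhibit the explicit inverse and confirm that its only denominators are powers of $\det A$. This is routine once $\Ahat = A^{-1}$ has been substituted, since the remaining system for $\Bhat$ is linear with invertible coefficient matrix $A$; everything else rests on the single observation that the products are polynomial while $\det A$ never vanishes.
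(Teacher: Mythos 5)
Your argument is correct and is precisely the one the paper intends: the corollary is stated with no proof because it is meant to follow immediately from the global coordinates and the displayed polynomial product rules, which is exactly what you verify (smoothness of multiplication because the formulas are polynomial, smoothness of inversion via $\Ahat = A^{-1}$ and the explicit solution $\Bhat^l_{jk} = -(A^{-1})^l_i B^i_{hp}(A^{-1})^h_j(A^{-1})^p_k$, whose only denominators are powers of $\det A$). No gaps; this is the standard and intended route.
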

\begin{lem}
The \emph{oriented} subgroups $L^{1+}_m$ and $L^{2+}_m$, where $\phi$ preserves orientation, are connected. 
\end{lem}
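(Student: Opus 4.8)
The plan is to reduce both assertions to the classical fact that $\GL^+(m,\R)$ is connected, and then to establish that fact. The first step is to recognise which elements lie in the oriented subgroups. The orientation behaviour of a diffeomorphism $\phi$ near the origin is governed entirely by the sign of its Jacobian at zero, namely $\det\bigl(D_j\phi^i(0)\bigr) = \det A$; hence under the identification $L^1_m \cong \GL(m,\R)$ the subgroup $L^{1+}_m$ is exactly $\GL^+(m,\R) = \{A : \det A > 0\}$. Since this orientation depends only on the first-order part $A$ and not on the second-order coefficients $B^i_{jk}$, the subgroup $L^{2+}_m$ consists precisely of those $(A,B) \in L^2_m$ with $\det A > 0$.

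Next I would dispose of the second-order case by reading off the topology from the global coordinates already introduced. The coordinates $(A^i_j, B^i_{jk})$, with $B^i_{jk} = B^i_{kj}$, identify $L^2_m$ as a manifold with $\GL(m,\R) \times \R^N$, where $N = m^2(m+1)/2$; the multiplication is a polynomial semidirect-product rule but does not disturb the underlying product topology. Under this identification $L^{2+}_m \cong \GL^+(m,\R) \times \R^N$, so that the connectedness of $L^{2+}_m$ follows at once from that of $L^{1+}_m$ together with the (obvious) connectedness of $\R^N$. Equivalently, the projection $(A,B) \mapsto A$ is a surjective homomorphism $L^{2+}_m \to L^{1+}_m$ whose kernel $\{(I,B)\}$ is, by the product rule, the additive vector group $(\R^N,+)$; a group extension with connected base and connected kernel is connected.

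It remains to prove that $\GL^+(m,\R)$ is connected, which is the only substantive point. Here I would appeal to the polar decomposition: every $A \in \GL(m,\R)$ factors uniquely and continuously as $A = QP$ with $Q$ orthogonal and $P$ symmetric positive-definite, and the constraint $\det A > 0$ forces $Q \in \mathrm{SO}(m)$. This exhibits a homeomorphism $\GL^+(m,\R) \cong \mathrm{SO}(m) \times \{\text{symmetric positive-definite matrices}\}$; the second factor is convex and hence connected, so the product is connected as soon as $\mathrm{SO}(m)$ is. The connectedness of $\mathrm{SO}(m)$ I would obtain by induction on $m$ from the fibre bundle $\mathrm{SO}(m-1) \hookrightarrow \mathrm{SO}(m) \to S^{m-1}$, whose fibre is connected by the inductive hypothesis and whose base $S^{m-1}$ is connected for $m \ge 2$ (the case $m = 1$ being trivial). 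I expect this last ingredient, the connectedness of the rotation group $\mathrm{SO}(m)$, to be the crux of the argument; once it is in hand, everything else is merely unwinding the coordinate descriptions established above.
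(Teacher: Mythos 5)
Your proof is correct and follows essentially the same route as the paper: identify $L^{1+}_m$ with $\GL^+(m,\R)$, and reduce $L^{2+}_m$ to it by observing that the second-order coordinates $B^i_{jk}$ contribute only a contractible factor (your product decomposition $\GL^+(m,\R)\times\R^N$ is the same idea as the paper's explicit path $s \mapsto (A^i_j, sB^i_{jk})$ joining $(A,B)$ to $(A,0)$). The only difference is that you also supply a proof of the standard fact that $\GL^+(m,\R)$ is connected, via polar decomposition and induction on $\mathrm{SO}(m)$, where the paper simply cites this as known.
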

\begin{proof}
As $L^1_m$ may be identified with $\GL(m,\R)$, the subgroup $L^1_m$ where $\phi$ preserves orientation may be identified with $\GL^+(m,\R)$, the subgroup of matrices satisfying $\det A^i_j > 0$, which is connected.

The map $L^1_m \to L^2_m$ given by $j^1_0\phi \mapsto j^2_0\phihat$, where $\phihat$ is the linear map $\phihat^i(t) = A^i_j t^j$ with $(A^i_j)$ being the matrix corresponding to $j^1_0\phi$, is continuous; the coordinates of the image are $(A^i_j, 0)$. The image of the subgroup $L^{1+}_m$ under this map is therefore connected. But every element of $L^{2+}_m$ may be joined to an element of this image by a path given in coordinates by
\[
s \mapsto (A^i_j, sB^i_{jk}) \, , \qquad s \in [0,1]
\] 
\end{proof}


\subsection{Group actions}

The jet groups $L^1_m$ and $L^2_m$ act on the velocity manifolds $T_m E$ and $T^2_m E$ by
\[
(j^1_0\phi, j^1_0\g) \mapsto j^1_0(\g\circ\phi) \, , \qquad
(j^2_0\phi, j^2_0\g) \mapsto j^2_0(\g\circ\phi) \, .
\]
These are right actions, and in coordinates they are
\begin{align*}
u^a & \mapsto u^a \\
u^a_i & \mapsto u^a_h A^h_i \\
u^a_{ij} & \mapsto u^a_{hk} A^h_i A^k_j + u^a_h B^h_{jk}
\end{align*}
where $A^i_j$ and $B^i_{jk}$ are the global coordinates of $j^2_0\phi$.
\begin{lem}
The action of $L^1_m$ on $T_m E$ restricts to $\Fm E$, and the restricted action is free. The action of $L^2_m$ on $T^2_m E$ restricts to $\Fmb E$, and the restricted action is free.
\end{lem}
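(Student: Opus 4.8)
The plan is to treat each clause of the lemma as two independent assertions --- that the action maps the regular locus to itself (so that it genuinely restricts), and that the restricted action has trivial stabilisers --- and to observe that the restriction is essentially formal while freeness is where the regularity hypothesis does all the work. Throughout I would read off the consequences of the coordinate formul\ae\ for the action already displayed, rather than working with the curves directly, since the fixed-point equations become transparent linear-algebra statements in those coordinates.

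For the restriction I would argue as follows. A velocity $j^1_0\g$ lies in $\Fm E$ exactly when $\g$ is an immersion near zero; since $\phi$ is a diffeomorphism fixing the origin, $\g\circ\phi$ is again an immersion near zero, so $j^1_0(\g\circ\phi) \in \Fm E$ whenever $j^1_0\g \in \Fm E$. Equivalently, and more in keeping with the coordinate approach, regularity of $j^1_0\g = (\xi_1, \ldots, \xi_m)$ says precisely that the $n \times m$ matrix $(u^a_i)$ has rank $m$; the action sends this matrix to $(u^a_h A^h_i)$, which is right multiplication by the invertible matrix $A = (A^h_i) \in \GL(m,\R)$, and right multiplication by an invertible matrix preserves rank. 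The same first-order computation disposes of $\Fmb E$, because membership in $\Fmb E$ depends only on $j^1_0\g = \tau^{2,1}_{mE}(j^2_0\g)$ and the action on the first-order coordinates $u^a_i$ is identical in the two cases.

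For freeness of the $L^1_m$ action I would suppose that $j^1_0\phi$ fixes a regular velocity $(\xi_1, \ldots, \xi_m)$. The fixed-point condition read off from the coordinate action is $u^a_h A^h_i = u^a_i$, that is, $(A^h_i - \d^h_i)\xi_h = 0$ for each counting index $i$. Since $j^1_0\g$ is regular the vectors $\xi_1, \ldots, \xi_m$ are linearly independent, so every coefficient $A^h_i - \d^h_i$ vanishes, forcing $A = I$ and hence $j^1_0\phi$ to be the identity of $L^1_m$. For $L^2_m$ I would bootstrap from this: the first-order part of the fixed-point equations is unchanged, so the argument above again gives $A = I$, and substituting $A^h_i = \d^h_i$ into the second-order rule $u^a_{ij} \mapsto u^a_{hk}A^h_i A^k_j + u^a_h B^h_{ij}$ leaves $u^a_h B^h_{ij} = 0$ for all $a,i,j$. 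For each fixed pair $(i,j)$ this reads $B^h_{ij}\xi_h = 0$, so linear independence of the $\xi_h$ forces $B^h_{ij} = 0$; thus $(A,B) = (I,0)$ and $j^2_0\phi$ is the identity of $L^2_m$.

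The only genuinely load-bearing step --- and the sole place the hypothesis enters --- is the passage from the fixed-point equations to the vanishing of the group parameters, which is exactly the use of linear independence of $\xi_1, \ldots, \xi_m$ (equivalently, full column rank of $(u^a_i)$, equivalently $j^1_0\g \in \Fm E$). I would emphasise that without regularity the action is not free: for instance the zero velocity is fixed by the whole group. I would also note that the symmetry constraint $u^a_{ij} = u^a_{ji}$ causes no trouble here, since the second-order equations are solved for each index pair independently; so although there is no real obstacle, the proof is worth stating carefully precisely because it pinpoints why regularity is indispensable.
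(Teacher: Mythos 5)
Your proposal is correct and follows essentially the same route as the paper: restriction is immediate because composing an immersion with a diffeomorphism is again an immersion, and freeness follows by reading the fixed-point equations in coordinates, using full rank of $(u^a_i)$ (equivalently linear independence of $\xi_1,\ldots,\xi_m$) to force $A=I$ first and then $B=0$. The only differences are presentational --- you add the rank-preservation reformulation of the restriction step and the remark about the symmetry of $u^a_{ij}$, neither of which changes the argument.
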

\begin{proof}
The map $\phi$ is a diffeomorphism onto its image, so if $\g$ is an immersion near zero then so is $\g\circ\phi$.

We use coordinates to show that the restricted actions are free. Suppose first that $j^1_0(\g\circ\phi) = j^1_0\g$, so that
\[
u^a_j(j^1_0\g) = u^a_i(j^1_0\g) A^i_j \, ;
\]
as $\g$ is an immersion near zero and $u^a_i(j^1_0\g) = D_i \g^a(0)$, it follows that the $m \times n$ matrix $u^a_i(j^1_0\g)$ must have rank $m$, so that $A^i_j = \d^i_j$ and hence $j^1_0\phi = j^1_0(\id_{\R^m})$.

Now suppose that $j^2_0(\g\circ\phi) = j^2_0\g$, so that $u^a_j(j^2_0\g) = u^a_i(j^2_0\g) A^i_j$ and now also
\[
u^a_{hk}(j^2_0\g) = u^a_{ij}(j^2_0\g) A^i_h A^j_k + u^a_i(j^2_0\g) B^i_{hk} \, .
\]
As before we see that $A^i_j = \d^i_j$, so that
\[
u^a_{hk}(j^2_0\g) = u^a_{hk}(j^2_0\g) + u^a_i(j^2_0\g) B^i_{hk}
\]
and therefore that $u^a_i(j^2_0\g) B^i_{hk} = 0$; the rank condition on $u^a_i(j^2_0\g)$ now tells us that $B^i_{hk} = 0$.
\end{proof}


\subsection{Infinitesimal actions}

Let $(a^i_j)$ be an element of the Lie algebra of $L^1_m$; the identification of the group with $\GL(m,r)$ means that its Lie algebra may be identified with $\gl(m,\R)$ so that $(a^i_j)$ is an arbitrary $m \times m$ matrix.
\begin{lem}
The vector field on $T_m E$ corresponding to $(a^i_j)$ is
\[
a^i_j u^a_i \vf{u^a_j} \, .
\]
\end{lem}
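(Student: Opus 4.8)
The plan is to compute the fundamental vector field of the right action directly from its definition. Given a one-parameter subgroup $t \mapsto g(t)$ in $L^1_m$ with $g(0) = j^1_0(\id_{\R^m})$ and $\dot{g}(0) = (a^i_j)$, the associated vector field at a point $\xi = j^1_0\g \in T_m E$ is obtained by differentiating the curve $t \mapsto \xi \cdot g(t)$ at $t = 0$. The strategy is therefore to write this curve in the coordinates $(u^a, u^a_i)$ using the known coordinate formula for the action, and then take the $t$-derivative.

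First I would recall from the group action formulae stated above that, in coordinates, $L^1_m$ acts by fixing $u^a$ and sending $u^a_i \mapsto u^a_h A^h_i$, where $(A^h_i)$ are the matrix coordinates of the acting element. Concretely, I would choose the one-parameter subgroup with matrix coordinates $A^h_i(t) = \d^h_i + t\, a^h_i + o(t)$, so that $\eval{\frac{d}{dt} A^h_i(t)}{t=0} = a^h_i$. Substituting into the action formula, the coordinate $u^a_i$ of the point $\xi \cdot g(t)$ is $u^a_h(\xi)\, A^h_i(t)$, while $u^a$ is unchanged. Differentiating at $t = 0$ then gives $\eval{\frac{d}{dt}}{t=0} u^a_i(\xi \cdot g(t)) = u^a_h(\xi)\, a^h_i$, and the $u^a$ components contribute nothing. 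Reading off the components of the tangent vector produces exactly $a^i_j u^a_i \vf{u^a_j}$ once the summation indices are matched (the free index on $\vf{}$ is the counting index $j$, and $h$ is relabelled accordingly).

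The only genuine subtlety is bookkeeping with the two kinds of indices: the coordinate index $a$, which is merely carried along as a spectator, and the counting indices, which are the ones the group acts upon. I would take care to verify that the sign and placement of indices in $a^i_j u^a_i \vf{u^a_j}$ agree with the convention that this is a \emph{right} action, since for a right action the infinitesimal generator is linear in the Lie-algebra element in the straightforward way used here, with no inversion or transpose appearing. A short remark confirming that the formula is independent of the choice of one-parameter subgroup representing $(a^i_j)$ — which holds because the derivative only sees $\dot{g}(0)$ — completes the argument.

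I do not expect any real obstacle: the result follows immediately once the coordinate expression for the action is differentiated, and the proof is essentially a one-line computation dressed in the language of fundamental vector fields. The main thing to get right is the index placement, ensuring that $a^i_j u^a_i \vf{u^a_j}$ and not, say, $a^j_i u^a_i \vf{u^a_j}$ emerges from the differentiation.
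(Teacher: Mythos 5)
Your proposal is correct and follows essentially the same route as the paper: choose a curve in $\GL(m,\R)$ through the identity with tangent $(a^i_j)$, push it through the coordinate formula $u^a_i \mapsto u^a_h A^h_i$ for the action, and differentiate at the identity to read off the components $a^i_j u^a_i\,\partial/\partial u^a_j$. The paper uses the affine curve $\sigma(s) = (\delta^i_j + s a^i_j)$ rather than a one-parameter subgroup, but as you note only $\dot g(0)$ matters, so the two computations coincide.
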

\begin{proof}
The map $\sigma : (-\ve,\ve) \to \GL(m,\R)$, defined for sufficiently small $\ve$ by $\sigma(s) = (\d^i_j + s a^i_j)$, is a curve in $\GL(m,\R)$ whose tangent vector at the identity is $(a^i_j)$. If $j^1_0\g \in T_m E$ then the corresponding curve through $j^1_0\g$ is given in coordinates by
\[
s \mapsto \left( u^b(j^1_0\g), (\d^i_j + s a^i_j) u^b_i(j^1_0\g) \right) \, .
\]
The resulting tangent vector $\xi \in T_{j^1_0\g} T_m E$ satisfies
\[
\udot^b(\xi) = 0 \, , \qquad \udot^b_j(\xi) = a^i_j u^b_i(j^1_0\g)
\]
so that the vector field on $T_m E$ defined by the Lie algebra element $(a^i_j)$ is
\[
a^i_j u^b_i \vf{u^b_j} \, .
\]
\end{proof}
We write $d^j_i$ for the Lie derivative operation of the basis vector field $\D^j_i = u^a_i \p / \p u^a_j$.


\subsection{Second order infinitesimal actions}

There is a similar result for the action of the Lie algebra of $L^2_m$.
\begin{lem}
Let $(a^i_j, b^i_{jk})$ be an element of the Lie algebra of $L^2_m$. The corresponding vector field on $T^2_m E$ is
\[
a^i_j u^a_i \vf{u^a_j} + \frac{1}{\#(jk)} \bigl( 2a^i_j u^a_{ik} + b^i_{jk} u^a_i \bigr) \vf{u^a_{jk}} \, .
\]
where $\#(jk)$ equals 1 if $j=k$ and equals 2 otherwise.
\end{lem}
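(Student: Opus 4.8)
The plan is to imitate the proof of the first-order infinitesimal action lemma: realise $(a^i_j, b^i_{jk})$ as the velocity of a curve in $L^2_m$ through the identity, let this curve act on a fixed second-order velocity, and read off the tangent vector to the resulting curve in $T^2_m E$.

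First I would take the curve $\sigma : (-\ve,\ve) \to L^2_m$ given in the global coordinates $(A^i_j, B^i_{jk})$ by $\sigma(s) = (\d^i_j + s a^i_j,\, s\, b^i_{jk})$, which is defined for small $\ve$ since $\d^i_j + s a^i_j$ is then invertible. Its coordinates at $s=0$ are $(\d^i_j, 0)$, so $\sigma(0)$ is the identity $j^2_0(\id_{\R^m})$, and $\dot\sigma(0) = (a^i_j, b^i_{jk})$. Fixing $j^2_0\g \in T^2_m E$ with coordinates $(u^a, u^a_i, u^a_{ij})$ and applying the coordinate formul\ae\ for the action given above, the curve $s \mapsto j^2_0\g \cdot \sigma(s)$ has coordinates
\begin{align*}
u^a(s) &= u^a \, , \\
u^a_j(s) &= u^a_i(\d^i_j + s a^i_j) \, , \\
u^a_{jk}(s) &= u^a_{il}(\d^i_j + s a^i_j)(\d^l_k + s a^l_k) + s\, b^i_{jk} u^a_i \, .
\end{align*}
Differentiating at $s=0$ gives the components of the generating vector $\xi$: namely $\udot^a(\xi)=0$, $\udot^a_j(\xi) = a^i_j u^a_i$ and $\udot^a_{jk}(\xi) = a^i_j u^a_{ik} + a^i_k u^a_{ij} + b^i_{jk} u^a_i$. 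The first two already account for the first-order part $a^i_j u^a_i \vf{u^a_j}$.

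The one delicate point, which I expect to be the main obstacle, is assembling these components into a vector field in the presence of the symmetry constraint $u^a_{ij} = u^a_{ji}$, so that the operators $\vf{u^a_{jk}}$ are not independent for $j \ne k$. I would adopt the convention that $\vf{u^a_{jk}}$ and $\vf{u^a_{kj}}$ denote the same operator and that the indices $j,k$ range over all ordered pairs; each off-diagonal direction is then counted twice, and the factor $\frac{1}{\#(jk)}$ compensates exactly for this. Concretely, contracting the non-symmetric coefficient $\frac{1}{\#(jk)}\bigl(2 a^i_j u^a_{ik} + b^i_{jk} u^a_i\bigr)$ against the symmetric operator $\vf{u^a_{jk}}$ effectively symmetrises it in $j,k$: for $j<k$ the two ordered pairs contribute $a^i_j u^a_{ik} + a^i_k u^a_{ij} + b^i_{jk} u^a_i = \udot^a_{jk}(\xi)$ (using the symmetry of $b^i_{jk}$), while for $j=k$ the single term gives $2 a^i_j u^a_{ij} + b^i_{jj} u^a_i = \udot^a_{jj}(\xi)$. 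Hence the assembled field agrees with $\xi$ on every independent coordinate $u^a_{jk}$ with $j \le k$, which is precisely the claimed expression.
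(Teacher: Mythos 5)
Your proposal is correct and follows essentially the same route as the paper: the same curve $\sigma(s) = (\d^i_j + s a^i_j,\, s b^i_{jk})$ through the identity of $L^2_m$, the same coordinate computation of the induced curve through $j^2_0\g$, and the same differentiation at $s=0$. Your explicit verification of how the $1/\#(jk)$ factor reconciles the non-symmetric coefficients with the constraint $u^a_{jk}=u^a_{kj}$ is a point the paper leaves implicit, and it is done correctly (using the symmetry of $b^i_{jk}$).
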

\begin{proof}
Let $\g$ be the curve in $L^2_m$ through the identity $j^2_0(\id)$ given in coordinates by
\[
s \mapsto \bigl( \d^i_j + s a^i_j, s b^i_{jk} \bigr) \, .
\]
If $j^2_0\g \in T^2_m E$ then the corresponding curve through $j^2_0\g$ is given in coordinates by
\[
s \mapsto \left( u^a(j^2_0\g), u^a_i(j^2_0\g) (\d^i_j + s a^i_j), 
u^a_{hi}(j^2_0\g) (\d^h_j + s a^h_j) (\d^i_k + s a^i_k) + s u^a_i(j^2_0\g) b^i_{jk} \right) \, .
\]
The resulting tangent vector $\xi \in T_{j^2_0\g} T^2_m E$ satisfies
\begin{align*}
\udot^a(\xi) & = 0 \\
\udot^a_j(\xi) & = a^i_j u^a_i(j^2_0\g) \\
\udot^a_{jk}(\xi) & = a^i_k u^a_{ij}(j^2_0\g) + a^i_j u^a_{ik}(j^2_0\g) + b^i_{jk} u^a_i(j^2_0\g)
\end{align*}
so that the vector field on $T^2_m E$ defined by the Lie algebra element corresponding to $(a^i_j,b^i_{jk})$ is
\[
a^i_j u^a_i \vf{u^a_j} + \frac{1}{\#(jk)} \bigl( 2 a^i_j u^a_{ik} + u^i_{jk} u^a_i \bigr) \vf{u^a_{jk}} \, .
\]
\end{proof}
We write $d^j_i$ and $d^{jk}_i$ for the Lie derivative operation of the basis vector fields
\[
\D^j_i = u^a_i \vf{u^a_j} + \frac{2}{\#(jk)} u^a_{ik} \vf{u^a_{jk}} \, , \qquad
\D^{jk}_i = \frac{1}{\#(jk)} u^a_i \vf{u^a_{jk}} \, .
\]
Note the use of the symbol $\#(jk)$ to compensate for the fact that the coordinate functions $u^a_{jk}$ and $£u^a_{kj}$ are equal, so that summing over $j$ and $k$ could result in double-counting.


\section{Geometric structures}

The special structure of velocity manifolds manifests itself in the existence of certain differential operators (`total derivatives') and differential forms (`contact forms') which capture certain aspects of the structure. The total derivatives and contact forms may also be used to identify those maps between velocity manifolds, and vector fields on velocity manifolds, which have been constructed by a process known as prolongation. Finally, there is an algebraic method of lifting tangent vectors from a manifold to its velocity manifold called the vertical lift, and this gives rise to vertical endomorphisms.


\subsection{Total derivatives}

The identity map $T_m E \to T_m E$ defines a section of the pull-back bundle $\tau_{mE}^* T_m E \to T_m E$. Its components $d_i$ are the \emph{total derivatives}, vector fields along $\tau_{mE}$. At a point $j^1_0\g$, the identification $T_m E \cong \bigoplus^m TE$ from Lemma~\ref{L1} gives the $k$-th component of $j^1_0\g$ as
\[
\eval{d_k}{j^1_0\g} = j^1_0(\g\circ i_k) = T\g(j^1_0 i_k) = T\g \biggl( \vfe{t^k}{0} \biggr) \, .
\]
Note that the subscript $k$ is a counting index, not a coordinate index. In coordinates, if $f$ is a function on $E$ then
\begin{align*}
\eval{d_k f}{j^1_0\g} & = \eval{d_k}{j^1_0\g} f = T\g \biggl( \vfe{t^k}{0} \biggr) f = \pde{(f \circ \g)}{t^k}{0} \\
& \qquad \qquad = \pde{f}{u^a}{\g(0)} D_k\g^a(0) = u^a_k(j^1_0\g) \pde{f}{u^a}{\g(0)}
\end{align*}
so that
\[
d_k = u^a_k \vf{u^a} \, .
\]
It is clear from this coordinate formula that the image of $(d_1, \ldots, d_m)$, a subspace of $T_{\g(0)}E$ corresponding to each point $j^1_0\g \in T_m E$, does not have constant rank on $T_m E$. But its restriction to $\Fm E$, where the $m \times n$ matrix $u^a_i$ has maximal rank, \emph{does} have constant rank $m$.


\subsection{Second order total derivatives}

We take a similar approach to second order total derivatives. The inclusion map $T^2_m E \to T_m T_m E$ defines a section of the pull-back bundle $\tau^{2,1\,*}_{mE} T_m T_m E \to T^2_m E$; its components $d_i$ are the \emph{second order total derivatives}, vector fields along $\tau^{2,1}_{mE}$. At a point $j^2_0\g$,
\[
\eval{d_k}{j^2_0\g} = T(j^1\g) \biggl( \vfe{t^k}{0} \biggr) \, ;
\]
in coordinates
\[
d_k = u^a_k \vf{u^a} + u^a_{kj} \vf{u^a_j} \, .
\]
Once again the image of $(d_1, \ldots, d_m)$, a subspace of $T_{j^1_0\g}T^m E$ corresponding to each point $j^2_0\g \in T^2_m E$, does not have constant rank on $T^2_m E$, but its restriction to $\Fmb E$ \emph{does} have constant rank $m$.


\subsection{Contact 1-forms}

Contact 1-forms on $T_m E$ or on $T^2_m E$ are the horizontal 1-forms which annihilate total derivatives, so that $\theta$ is a contact $1$-form exactly when
\[
\langle \theta, d_k \rangle = 0 \, .
\]
Here, `horizontal' means horizontal over $E$ for a $1$-form on $T_m E$, and it means horizontal over $T_m E$ for a $1$-form on $T^2_m E$, so that it makes sense to evaluate such forms on total derivatives; indeed, the modules of such horizontal $1$-forms are dual to the modules of vector fields along $T_m E \to E$ or along $T^2_m E \to T_m E$.

In fact we shall consider contact $1$-forms, not on the whole of $T_m E$ or $T^2_m E$, but on the submanifolds of regular velocities $\Fm E$ and $\Fmb E$. The reason is that, as mentioned previously, the image of the map $(d_1, \ldots, d_m)$ has constant rank $m$ only on the regular submanifolds; it is, for example, zero on the zero section of $T_m E$, and so every horizontal cotangent vector on that zero section is annihilated by all the total derivatives. If we were to include non-regular velocities then there would be `contact' cotangent vectors which were not the values of any (smooth, and hence continuous) contact $1$-form. 

The important property of contact $1$-forms is that they always pull back to zero under prolongations.
\begin{lem}
If $\theta$ is a contact $1$-form on $\Fm E$ then $(\jbar^1\g)^* \theta = 0$. If it is a contact $1$-form on $\Fmb E$ then $(\jbar^2\g)^* \theta = 0$, where the prolonged $m$-curve $\jbar^2\g$ is defined by $\jbar^2\g(t) = j^2_0(\g\circ\tr_t)$.
\end{lem}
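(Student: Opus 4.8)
The plan is to work in coordinates, using horizontality to pin down the shape of $\theta$ and then showing that the coefficients of the pulled-back form are precisely the contractions $\langle \theta, d_k \rangle$, which vanish by the contact condition. Throughout I take $\g$ to be an immersion, so that the prolongations $\jbar^1\g$ and $\jbar^2\g$ actually take values in the regular submanifolds $\Fm E$ and $\Fmb E$ on which $\theta$ is defined and on which the contact condition holds.

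For the first-order case, horizontality over $E$ means that in a chart $\theta = \theta_a \, du^a$ for some functions $\theta_a$ on $\Fm E$, with no $du^a_i$ terms appearing. The prolongation is described in coordinates by $u^a \circ \jbar^1\g = \g^a$ and $u^a_i \circ \jbar^1\g = D_i\g^a$, both read off directly from $\jbar^1\g(t) = j^1_0(\g \circ \tr_t)$. Pulling back I would compute
\[
(\jbar^1\g)^* \theta = (\theta_a \circ \jbar^1\g) \, d\g^a = \bigl( (\theta_a u^a_k) \circ \jbar^1\g \bigr) \, dt^k \, ,
\]
using $d\g^a = D_k\g^a \, dt^k$ and $D_k\g^a = u^a_k \circ \jbar^1\g$. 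Since $d_k = u^a_k \, \p/\p u^a$, the bracketed coefficient is exactly $\langle \theta, d_k \rangle \circ \jbar^1\g$, which is zero because $\theta$ is a contact form.

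The second-order case is identical in spirit. Now horizontality over $T_m E$ forces $\theta = \theta_a \, du^a + \theta^i_a \, du^a_i$ (no $du^a_{ij}$ terms), and the second prolongation satisfies $u^a \circ \jbar^2\g = \g^a$, $u^a_i \circ \jbar^2\g = D_i\g^a$, $u^a_{ij} \circ \jbar^2\g = D_iD_j\g^a$. The same differentiation gives a $dt^k$-coefficient equal to $(\theta_a u^a_k + \theta^i_a u^a_{ki}) \circ \jbar^2\g$, which is $\langle \theta, d_k\rangle \circ \jbar^2\g$ for the second-order total derivative $d_k = u^a_k \, \p/\p u^a + u^a_{kj} \, \p/\p u^a_j$, and again vanishes. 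Invariantly, the point in both cases is that $\tau_{mE} \circ \jbar^1\g = \g$ and $\tau^{2,1}_{mE} \circ \jbar^2\g = \jbar^1\g$, so that the tangent vector $T(\jbar^1\g)(\p/\p t^k)$ projects under $\tau_{mE}$ (resp.\ under $\tau^{2,1}_{mE}$) onto the total derivative $d_k$; horizontality of $\theta$ then means the pairing depends only on this projection.

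The computations themselves are routine, so the only thing needing care is the first sentence of the plan: the contact condition $\langle \theta, d_k \rangle = 0$ is imposed only on the regular submanifolds, so one must check that the prolongation of an immersion is regular before the condition may be invoked along it. This is where the hypothesis that $\g$ is an immersion enters, via the observation that $\g \circ \tr_t$ is an immersion near $0$ whenever $\g$ is an immersion near $t$.
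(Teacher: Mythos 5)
Your proof is correct and follows essentially the same route as the paper's: both arguments come down to evaluating $(\jbar^1\g)^*\theta$ on $\p/\p t^k$ and recognising that the resulting coefficient is the pairing $\langle\theta, d_k\rangle$ taken along the prolongation, which vanishes by the contact condition; you write this in coordinates while the paper phrases it invariantly via $\eval{d_k}{j^1_0\g} = T\g(\p/\p t^k|_0)$, a reformulation you yourself supply in your closing remark. Your explicit note that the prolongation of an immersion lands in the regular submanifold is a point the paper leaves implicit, and is worth having.
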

\begin{proof}
If $\theta$ is a contact $1$-form on $\Fm E$ then 
\begin{align*}
\biggl\langle \eval{(\jbar^1\g)^* \theta}{t} \, , \vfe{t^k}{t} \bigg\rangle 
& = \biggl\langle \eval{(j^1(\g\circ\tr_t))^* \theta}{t} \, , \vfe{t^k}{t} \bigg\rangle \\
& = \biggl\langle \eval{(j^1\g)^* \theta}{0} \, , \vfe{t^k}{0} \bigg\rangle \\
& = \biggl\langle \eval{\theta}{j^1_0\g} \, , T\g \biggl( \vfe{t^k}{0} \biggr) \bigg\rangle \\
& = \langle \eval{\theta}{j^1_0\g} \, , \eval{d_k}{j^1_0\g} \rangle = 0 \, .
\end{align*}
The proof for a contact $1$-form on $\Fmb E$ is similar.
\end{proof}
\begin{prop}
If $\theta$ is a $1$-form on $\Fm E$ satisfying $(\jbar^1\g)^* \theta = 0$ for every prolonged $m$-curve $\jbar^1\g$ in $\Fm E$ then $\theta$ is horizontal over $E$, and is a contact $1$-form. A similar result holds for contact $1$-forms on $\Fmb E$.
\end{prop}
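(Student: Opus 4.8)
The plan is to argue in coordinates, exploiting the freedom to prescribe the second derivatives at the origin of a test $m$-curve. Write a general $1$-form on $\Fm E$ as
\[
\theta = \theta_a \, du^a + \theta^i_a \, du^a_i \, .
\]
In these coordinates $\theta$ is horizontal over $E$ exactly when all the coefficients $\theta^i_a$ vanish, while a horizontal form is contact exactly when $\theta_a u^a_k = 0$ for each $k$, this being the condition $\langle \theta, d_k \rangle = 0$ with $d_k = u^a_k \, \partial/\partial u^a$. The goal is therefore to extract both families of equations from the hypothesis $(\jbar^1\g)^*\theta = 0$.

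First I would compute the pullback. Along any prolongation one has $u^a \circ \jbar^1\g = \g^a$ and $u^a_i \circ \jbar^1\g = D_i\g^a$ as functions of $t$, so that
\[
(\jbar^1\g)^*\theta = \bigl( (\theta_a \circ \jbar^1\g) \, D_k\g^a + (\theta^i_a \circ \jbar^1\g) \, D_k D_i\g^a \bigr) \, dt^k \, .
\]
Setting each $dt^k$-coefficient to zero and evaluating at $t = 0$, with $p = j^1_0\g$, gives the master relation
\[
\theta_a(p) \, D_k\g^a(0) + \theta^i_a(p) \, D_k D_i\g^a(0) = 0 \qquad (1 \le k \le m) \, .
\]

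The second step is to realise arbitrary jet data by explicit test curves. Given $p \in \Fm E$, in a chart about $\tau_{mE}(p)$ take
\[
\g^a(t) = u^a(p) + u^a_i(p) \, t^i + \tfrac{1}{2} c^a_{ij} \, t^i t^j
\]
for an arbitrary symmetric array $c^a_{ij}$; since $u^a_i(p)$ has rank $m$ the curve $\g$ is an immersion near $0$, so $\jbar^1\g$ is a prolonged $m$-curve in $\Fm E$ with $j^1_0\g = p$ and $D_iD_j\g^a(0) = c^a_{ij}$. Taking $c = 0$ collapses the master relation to $\theta_a(p) u^a_k(p) = 0$, the contact condition. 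Taking instead, for a fixed pair $(b, j)$, the array with only $c^b_{jj} = 1$ makes the $dt^j$-coefficient equation read $\theta_a(p) u^a_j(p) + \theta^j_b(p) = 0$, which by the contact condition just established reduces to $\theta^j_b(p) = 0$; letting $(b,j)$ range over all indices shows that every $\theta^i_a$ vanishes, so $\theta$ is horizontal over $E$. Being horizontal and annihilating each $d_k$, it is then a contact form by definition.

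The one step that needs genuine care is this realisability: I must ensure that prescribed symmetric second derivatives are attained by immersions whose prolongations actually lie in $\Fm E$, which is exactly why the explicit quadratic test curve together with the rank-$m$ condition and the openness of the immersion property is used. The symmetry $D_kD_i\g^a = D_iD_k\g^a$ must also be respected when choosing the arrays $c$, but the diagonal choice $c^b_{jj}=1$ avoids any ambiguity. For the corresponding statement on $\Fmb E$ the argument runs identically: one expands a $1$-form in the coordinates $(u^a, u^a_i, u^a_{ij})$, pulls back along $\jbar^2\g$ using $u^a_{ij}\circ\jbar^2\g = D_iD_j\g^a$, and prescribes the third derivatives $D_kD_iD_j\g^a(0)$ of a cubic test curve to isolate each coefficient of $du^a_{ij}$ and, on setting those third derivatives to zero, to recover the second-order contact condition.
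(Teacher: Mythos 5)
Your proof is correct and follows essentially the same route as the paper: expand $\theta$ in coordinates, pull back along a prolongation, and vary the second derivatives at the origin of a representative curve of the fixed velocity to separate the two families of conditions. The only cosmetic differences are that you make the realisability of arbitrary symmetric second derivatives explicit via quadratic test curves (and check the immersion property), and that you derive the contact condition $\theta_a u^a_k = 0$ before horizontality, whereas the paper argues in the opposite order.
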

\begin{proof}
We show first that $\theta$ is horizontal over $E$, by showing that it is horizontal at each point $j^1_0\g \in \Fm E$. Write $\theta$ in coordinates around such a point as
\[
\theta = \theta_a du^a + \theta^i_a du^a_i \, ;
\]
then if $\g$ is a representative $m$-curve for the velocity $j^1_0\g$ we have
\[
(\jbar^1\g)^* \theta = (\theta_a \circ \jbar^1\g) \bigl( (\jbar^1\g)^* du^a \bigr) 
+ (\theta^i_a \circ \jbar^1\g) \bigl( (\jbar^1\g)^* du^a_i \bigr) \, .
\]
But
\begin{align*}
\eval{(\jbar^1\g)^* du^a}{0} & = \eval{d(u^a \circ \jbar^1\g)}{0} = \eval{d\g^a}{0} = \pde{\g^a}{t^j}{0} \eval{dt^j}{0} \\
\eval{(\jbar^1\g)^* du^a_i}{0} & = \eval{d(u^a_i \circ \jbar^1\g)}{0} = \eval{d \biggl( \pd{\g^a}{t^i} \biggr)}{0}
= \pdeb{\g^a}{t^i}{t^j}{0} \eval{dt^j}{0}
\end{align*}
so that
\[
0 = \eval{(\jbar^1\g)^* \theta}{0} = \biggl( (\theta_a \circ \jbar^1\g)(0) \pde{\g^a}{t^j}{0}
+ (\theta^i_a \circ \jbar^1\g)(0) \pdeb{\g^a}{t^i}{t^j}{0} \biggr) \eval{dt^j}{0}
\]
and hence
\[
\theta_a(j^1_0\g) \pde{\g^a}{t^j}{0} + \theta^i_a(j^1_0\g) \pdeb{\g^a}{t^i}{t^j}{0} = 0 \, .
\]
Choosing a different representative $m$-curve $\ghat$ of $j^1_0\g$ which differs in its second derivatives from $\g$ (although necessarily having the same first derivatives) allows us to conclude that $\theta^i_a(j^1_0\g) = 0$, so that $\theta$ is horizontal at $j^1_0\g$ and hence is a horizontal $1$-form. We also see from this argument that
\[
\theta_a(j^1_0\g) \pde{\g^a}{t^j}{0} = 0 \, .
\]
Finally we observe that
\[
\langle \theta \, , d_k \rangle = \biggl\langle \theta_a du^a \, , u^b_k \vf{u^b} \biggr\rangle = \theta_a u^a_k
\]
so that
\[
\eval{\langle \theta \, , d_k \rangle}{j^1_0\g} = \theta_a(j^1_0\g) \pde{\g^a}{t^k}{0} = 0
\]
for each point $j^1_0\g \in T_m E$, showing that $\langle \theta \, , d_k \rangle = 0$ and hence that $\theta$ is a contact $1$-form.

The proof for forms on $\Fmb E$ is similar in principle but involves more complicated calculations.
\end{proof}
The coordinate expressions for contact $1$-forms on velocity manifolds are quite different from those on jet manifolds, and involve determinants:\ indeed, contact $1$-forms on $\Fm E$ are sums of scalar multiples of $(m+1) \times (m+1)$ determinants
\[
\theta^{a_1 a_2 \cdots a_{m+1}} = 
\begin{vmatrix}
u^{a_1}_1 & u^{a_2}_1 & \cdots & u^{a_{m+1}}_1 \\ 
u^{a_1}_2 & u^{a_2}_2 & \cdots & u^{a_{m+1}}_2 \\ 
\vdots & \vdots & & \vdots \\
u^{a_1}_m & u^{a_2}_m & \cdots & u^{a_{m+1}}_m \\ 
du^{a_1} & du^{a_2} & \cdots & du^{a_{m+1}} 
\end{vmatrix} \, .
\]
To see that such a determinant is indeed a contact $1$-form, evaluate it on the total derivative $d_k = u^b_k \p / \p u^b$ to give
\[
\langle \theta^{a_1 a_2 \cdots a_{m+1}} \, , d_k \rangle = 
\begin{vmatrix}
u^{a_1}_1 & u^{a_2}_1 & \cdots & u^{a_{m+1}}_1 \\ 
u^{a_1}_2 & u^{a_2}_2 & \cdots & u^{a_{m+1}}_2 \\ 
\vdots & \vdots & & \vdots \\
u^{a_1}_m & u^{a_2}_m & \cdots & u^{a_{m+1}}_m \\ 
u^{a_1}_k & u^{a_2}_k & \cdots & u^{a_{m+1}}_k 
\end{vmatrix}
= 0 \, .
\]
To show that these forms span the local contact 1-forms, we show that their values at each point span the contact cotangent vectors at that point. Let the coordinate functions on the fibres of $T^* \Fm E$ corresponding to the coordinates $(u^a, u^a_i)$ on $\Fm E$ be $(p_a, p_a^i)$; then horizontal cotangent vectors satisfy the equations $p_a^i = 0$, and we have seen that the condition $\langle \theta \, , d_k \rangle = 0$ corresponds to a coordinate condition which may now be written as $u^a_k p_a = 0$.

Now observe that at each point $j^1_0\g$ there is at least one set of $m$ coordinates $(u^{a_1}_1, u^{a_2}_2, \ldots, u^{a_m}_m)$ such that the determinant $\det u^{a_i}_j$ does not vanish at $j^1_0\g$; suppose, without loss of generality, that this set is $(u^1_1, u^2_2, \ldots, u^m_m)$, for we may always rearrange the order of the base coordinates $u^a$ if necessary. It is clear that the cotangent vectors
\[
\theta^{1 2 \cdots m, m+1}_{j^1_0\g}, \theta^{1 2 \cdots m, m+2}_{j^1_0\g}, \ldots,
\theta^{1 2 \cdots m, n}_{j^1_0\g}
\]
are linearly independent, so that the subspace of the space of contact cotangent vectors at $j^1_0\g$ spanned by them has dimension $n - m$. But $\dim \tau_{mE}^*(T^*_{j^1_0\g} E)= n$ and the $m$ equations $u^a_k p_a$ characterising contact $1$-forms are linearly independent for regular velocities, so that the dimension of the space of contact cotangent vectors at $j^1_0\g$ is $n - m$.


\subsection{Contact $r$-forms}

We define contact $r$-forms using the pull-back condition, so that an $r$-form $\omega$ on $\Fm E$ is a contact $r$-form if $(\jbar^1\g)^* \omega = 0$, and an $r$-form $\omega$ on $\Fmb E$ is a contact $r$-form if $(\jbar^2\g)^* \omega = 0$. Note that contact $r$-forms need not be horizontal if $r > 1$.

We now see another important difference between contact forms on velocity manifolds and contact forms on jet manifolds. In the latter context, the contact $r$-forms are generated by the contact $1$-forms and their exterior derivatives; but this is not the case on velocity manifolds. For example, on $\Fb\R^3$ the contact 1-forms are generated by the single $1$-form
\[
\theta =
\begin{vmatrix}
u^1_1 & u^2_1 & u^3_1 \\
u^1_2 & u^2_2 & u^3_2 \\
du^1 & du^2 & du^3
\end{vmatrix} \, ;
\]
but $(u^1_1 du^2 - u^2_1 du^1) \wedge du^3_2 - (u^1_2 du^2 - u^2_2 du^1) \wedge du^3_1$ is a contact 2-form which cannot be written in terms of $\theta$ and $d\theta$.


\subsection{Prolongations of maps}

Let $E_1$, $E_2$ be manifolds, and let $f : E_1 \to E_2$ a map. The \emph{prolongation} of $f$ to $T_m E_1$ is the map
\[
T_m f : T_m E_1 \to T_m E_2
\]
defined by
\[
T_m f(j^1_0\g) = j^1_0(f \circ \g) \, .
\]
It is immediate from this definition that $T_m(f \circ g) = T_m f \circ T_m g$ and that $T_m (\id_E) = \id_{T_m E}$, so that $T_m$ is a covariant functor. In coordinates,
\[
u^a \circ T_m f = f^a \, , \qquad u^a_i \circ T_m f = d_i f^a \, .
\]
It is important to note that $T_m f$ might not restrict to a map $\Fm E_1 \to \Fm E_2$, because $f\circ\g$ might not be an immersion, even though $\g$ is an immersion.


\subsection{Prolongations and the exchange map}

As a particular example, the prolongation of the vector bundle projection $\tau_{mE} : T_m E \to E$ to $T_{m^\prime} T_m E$ is
\[
T_{m^\prime} \tau_{mE} : T_{m^\prime} T_m E \to T_{m^\prime}E \, .
\]
\begin{lem}
\label{L15}
The exchange map $e : T_{m^\prime} T_m E \to T_m T_{m^\prime}E$ satisfies
\[
T_{m^\prime} \tau_{mE} \circ e = \tau_{m(T_{m^\prime}E)} \, .
\]
\end{lem}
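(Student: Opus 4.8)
The plan is to verify the identity pointwise, representing a general point by a double curve exactly as in the proof of Lemma~\ref{L5}. First I would observe that for the composite $T_{m^\prime}\tau_{mE}\circ e$ to be defined one must read $e$ as the exchange map $T_m T_{m^\prime}E\to T_{m^\prime}T_m E$, i.e.\ the version of Lemma~\ref{L5} with $m$ and $m^\prime$ interchanged (the inverse of the map written there); by Lemma~\ref{L5} this is a bijection, so both sides become maps $T_m T_{m^\prime}E\to T_{m^\prime}E$ and the identity is a statement about such maps.

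Then I would take a double $(m^\prime,m)$-curve $\psi : O^\prime\times O\to E$, with $\psi_s(t)=\psi(s,t)$ and $\psihat(t,s)=\psi(s,t)$ as in Lemma~\ref{L5}, and let $Y=j^1_0(t\mapsto j^1_0\psihat_t)\in T_m T_{m^\prime}E$ be the point it represents, where $\psihat_t(s)=\psi(s,t)$. For the right-hand side I would use that $\tau_{m(T_{m^\prime}E)}$ merely evaluates the outer $m$-curve at $0$, so $\tau_{m(T_{m^\prime}E)}(Y)=j^1_0\psihat_0=j^1_0(s\mapsto\psi(s,0))$. For the left-hand side, $e$ sends $Y$ to $j^1_0(s\mapsto j^1_0\psi_s)\in T_{m^\prime}T_m E$, and then the prolongation $T_{m^\prime}\tau_{mE}$, which by definition sends $j^1_0\gt\mapsto j^1_0(\tau_{mE}\circ\gt)$, replaces each inner velocity $j^1_0\psi_s$ by $\tau_{mE}(j^1_0\psi_s)=\psi_s(0)=\psi(s,0)$; hence $T_{m^\prime}\tau_{mE}(e(Y))=j^1_0(s\mapsto\psi(s,0))$ as well. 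The two expressions agree, which is the assertion.

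An equivalent, purely computational route would combine the coordinate formulae for $e$ from Lemma~\ref{L5} with the prolongation formulae $u^a\circ T_m f=f^a$, $u^a_i\circ T_m f=d_i f^a$ and the fact that $\tau_{m(T_{m^\prime}E)}$ discards the outer velocity coordinates, checking that both sides pull the coordinate functions on $T_{m^\prime}E$ back to the same functions. I expect the main obstacle to be bookkeeping rather than substance: one must keep the two families of counting indices — the $T_m$-directions and the $T_{m^\prime}$-directions — rigorously apart, since the exchange map is exactly what interchanges them. A small point to check, as in Lemma~\ref{L5}, is that every element of $T_m T_{m^\prime}E$ really does arise from some double curve (take a first-order polynomial representative in a chart), after which the bijectivity of $e$ makes the composite well-defined on the whole manifold.
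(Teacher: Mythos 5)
Your argument is correct, and you are right to flag the point the paper glosses over: as printed, the composite $T_{m^\prime}\tau_{mE}\circ e$ is not defined unless $e$ is read as the exchange map in the direction $T_m T_{m^\prime}E\to T_{m^\prime}T_m E$; equivalently the identity should be read as $\tau_{m(T_{m^\prime}E)}\circ e=T_{m^\prime}\tau_{mE}$ with $e$ exactly as in Lemma~\ref{L5}, which is the form actually invoked later to show that $e\circ T_m X$ is a vector field. Where you genuinely differ from the paper is in method: you verify the identity intrinsically on jet representatives, using that $e$ swaps the two arguments of a double curve, that $\tau_{m(T_{m^\prime}E)}$ evaluates the outer curve at $0$, and that the prolongation $T_{m^\prime}\tau_{mE}$ applies $\tau_{mE}$ inside the jet, so that both sides yield $j^1_0(s\mapsto\psi(s,0))$. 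The paper instead does a two-line coordinate computation, pulling $u^a$ and $u^a_i$ back through both sides via the coordinate formul\ae\ for $e$ and the prolongation formula $u^a_j\circ T_{m^\prime}\tau_{mE}=u^a_{;j}$ --- essentially your ``alternative route''. The coordinate check is shorter given the machinery already set up; your version is coordinate-free and makes the ``interchange of the two diagrams'' transparent, at the small cost of the observation (which you correctly make) that every double velocity is represented by an actual double curve --- note only that the polynomial representative must include the bilinear cross term $u^a_{i;j}\,t^i s^j$, so it is of first order in each argument separately rather than of total degree one.
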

\begin{proof}
From Lemma~\ref{L5}, $e$ may be expressed in coordinates as
\[
u^a \circ e = u^a \, , \qquad u^a_i \circ e = u^a_{;i} \, , \qquad u^a_{;j} \circ e = u^a_j \, , \qquad u^a_{i;j} \circ e = u^a_{j;i} \, .
\]
Thus
\[
u^a \circ \tau_{m(T_{m^\prime}E)} = u^a \, , \qquad u^a_i \circ \tau_{m(T_{m^\prime}E)} = u^a_i
\]
whereas
\[
u^a \circ T_{m^\prime} \tau_{mE} \circ e = u^a \circ e = u^a\, , \qquad
u^a_i \circ  T_{m^\prime} \tau_{mE} \circ e = u^a_{;i} \circ e = u^a_i \, .
\]
\end{proof}
In other words, the exchange map interchanges these two diagrams.
\begin{center}
\begin{picture}(50,70)(100,-10)
\put(0,50){\makebox(0,0){$T_{m^\prime} T_m E$}}
\put(0,0){\makebox(0,0){$T_m E$}}
\put(70,50){\makebox(0,0){$T_{m^\prime}E$}}
\put(70,0){\makebox(0,0){$E$}}
\put(25,50){\vector(1,0){25}}
\put(20,00){\vector(1,0){35}}
\put(0,40){\vector(0,-1){30}}
\put(70,40){\vector(0,-1){30}}
\put(35,55){\makebox(0,0)[b]{$\scriptstyle T_{m^\prime} \tau_{mE}$}}
\put(35,-5){\makebox(0,0)[t]{$\scriptstyle \tau_{mE}$}}
\put(75,25){\makebox(0,0)[l]{$\scriptstyle \tau_{m^\prime E}$}}
\put(-5,25){\makebox(0,0)[r]{$\scriptstyle \tau_{m^\prime (T_m E)}$}}
\put(120,30){\vector(1,0){20}}
\put(140,30){\vector(-1,0){20}}
\put(130,35){\makebox(0,0){$e$}}
\put(200,50){\makebox(0,0){$T_m T_{m^\prime} E$}}
\put(200,0){\makebox(0,0){$T_m E$}}
\put(270,50){\makebox(0,0){$T_{m^\prime}E$}}
\put(270,0){\makebox(0,0){$E$}}
\put(225,50){\vector(1,0){25}}
\put(220,00){\vector(1,0){35}}
\put(200,40){\vector(0,-1){30}}
\put(270,40){\vector(0,-1){30}}
\put(235,55){\makebox(0,0)[b]{$\scriptstyle \tau_{m(T_{m^\prime} E)}$}}
\put(235,-5){\makebox(0,0)[t]{$\scriptstyle \tau_{mE}$}}
\put(275,25){\makebox(0,0)[l]{$\scriptstyle \tau_{m^\prime E}$}}
\put(195,25){\makebox(0,0)[r]{$\scriptstyle T_m \tau_{m^\prime E}$}}
\end{picture}
\end{center}


\subsection{Prolongations of vector fields}

A vector field $X$ on $E$ is a map $E \to TE$, and so its prolongation (as a map) is $T_m X : T_m E \to T_m TE$.
\begin{lem}
The composition $X^1_m = e \circ T_m X$, where $e : T_m TE \to TT_m E$ is the exchange map, is a vector field on $T_m E$
\end{lem}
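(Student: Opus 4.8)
The plan is to unwind the definition: a vector field on $T_m E$ is a smooth section of the tangent projection $\tau_{1(T_m E)} : TT_m E \to T_m E$, so the entire content of the lemma is to check that $X^1_m$ maps into $TT_m E$ over $T_m E$ and satisfies the section identity $\tau_{1(T_m E)} \circ X^1_m = \id_{T_m E}$. The domain and codomain are immediate, since $T_m X : T_m E \to T_m TE$ and $e : T_m TE \to TT_m E$ compose to a map $T_m E \to TT_m E$; smoothness is also immediate, because $e$ is smooth by Lemma~\ref{L5} and the prolongation $T_m X$ is smooth. So everything reduces to the section identity.

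The key geometric input I would use is the compatibility of the exchange map with the bundle projections. Here the inner velocity order is one, since $TE = T_1 E$, so $e$ is the exchange map $T_m T_1 E \to T_1 T_m E$ of Lemma~\ref{L5}, and Lemma~\ref{L15}, read as the statement that $e$ interchanges the two projection squares, specialises to
\[
\tau_{1(T_m E)} \circ e = T_m \tau_{1E} \, ,
\]
where $\tau_{1E} : TE \to E$ is the tangent projection. (If one prefers, this identity may be verified directly from the coordinate formul\ae\ for $e$ in Lemma~\ref{L5}.)

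With this in hand the section identity follows formally, using that $T_m$ is a covariant functor and that $X$ is a section of $\tau_{1E}$:
\[
\tau_{1(T_m E)} \circ X^1_m = \tau_{1(T_m E)} \circ e \circ T_m X = T_m \tau_{1E} \circ T_m X = T_m(\tau_{1E} \circ X) = T_m(\id_E) = \id_{T_m E} \, .
\]
Hence $X^1_m$ is a smooth section of $\tau_{1(T_m E)}$, that is, a vector field on $T_m E$.

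I expect the only real obstacle to be pinning down the correct specialisation of Lemma~\ref{L15}: one must identify $\tau_{1(T_m E)}$ as the tangent projection of $T_m E$ (the outer projection of the double velocity manifold with inner order one) rather than the prolonged projection $T_m \tau_{mE}$, and keep track of which slot of the exchange map carries the single inner-velocity index. Once that bookkeeping is settled the argument is purely formal. As a by-product, the same computation yields the explicit form $X^1_m = X^a \vf{u^a} + (d_i X^a) \vf{u^a_i}$ of the prolonged vector field, where $X = X^a \vf{u^a}$.
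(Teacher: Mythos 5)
Your argument is correct and is essentially the paper's own proof: both reduce the claim to the projection identity $\tau_{T_m E} \circ e = T_m \tau_E$ supplied by Lemma~\ref{L15} and then conclude by functoriality of $T_m$ together with $\tau_E \circ X = \id_E$. The extra care you take in pinning down the correct specialisation of Lemma~\ref{L15} (and the remark on smoothness via Lemma~\ref{L5}) is sensible bookkeeping but does not change the route.
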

\begin{proof}
From Lemma~\ref{L15},
\begin{align*}
\tau_{T_m E} \circ e \circ T_m X
& = T_m \tau_E \circ T_m X \\
& = T_m (\tau_E \circ X) \\
& = T_m(\id_E) \\
& = \id_{T_m E} \, .
\end{align*}
\end{proof}
The vector field $X^1_m$ is called the \emph{prolongation of $X$ to $T_m E$}.
\begin{prop}
If $\psi_s$ is the flow of $X$ then $T_m \psi_s$ is the flow of $X^1_m$.
\end{prop}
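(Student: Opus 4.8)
The plan is to verify the two defining properties of a flow: first, that $s \mapsto T_m \psi_s$ is a (local) one-parameter group of diffeomorphisms of $T_m E$, and second, that its infinitesimal generator is $X^1_m$. Granting both, the full flow equation at an arbitrary parameter value follows by differentiating the group relation $T_m \psi_{s+r} = T_m \psi_r \circ T_m \psi_s$ in $r$ at $r = 0$, so no separate argument is needed for general $s$.

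For the group property I would use only the functoriality of $T_m$ established in the discussion of prolongations of maps. Since $\psi_s$ is the flow of $X$ we have $\psi_0 = \id_E$ and $\psi_{s+r} = \psi_s \circ \psi_r$ on the relevant domains; applying $T_m$ and using $T_m(f \circ g) = T_m f \circ T_m g$ together with $T_m(\id_E) = \id_{T_m E}$ gives $T_m \psi_0 = \id_{T_m E}$ and $T_m \psi_{s+r} = T_m \psi_s \circ T_m \psi_r$. In particular each $T_m \psi_s$ is invertible with inverse $T_m \psi_{-s}$, so these are indeed (local) diffeomorphisms.

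The substantive step is identifying the generator. At a velocity $j^1_0\g \in T_m E$ the definition of the prolongation of a map gives $T_m \psi_s(j^1_0\g) = j^1_0(\psi_s \circ \g)$, so the generator of the group is the tangent vector $\frac{d}{ds}\big|_{s=0} j^1_0(\psi_s \circ \g) = j^1_0\bigl(s \mapsto j^1_0(\psi_s \circ \g)\bigr) \in T_{j^1_0\g} T_m E$. On the other hand $T_m X(j^1_0\g) = j^1_0(X \circ \g)$, and writing each $X(\g(t))$ as the velocity $j^1_0\bigl(s \mapsto \psi_s(\g(t))\bigr)$ of the flow exhibits $T_m X(j^1_0\g)$ as the element of $T_m TE$ represented by the double curve $(t,s) \mapsto \psi_s(\g(t))$. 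The coordinate formulae for the exchange map in Lemma~\ref{L5} interchange the two sets of single derivatives, and so send this element to $j^1_0\bigl(s \mapsto j^1_0(t \mapsto \psi_s(\g(t)))\bigr) = j^1_0\bigl(s \mapsto j^1_0(\psi_s \circ \g)\bigr)$, which is exactly the generator computed above. Hence $X^1_m(j^1_0\g) = e\bigl(T_m X(j^1_0\g)\bigr)$ coincides with $\frac{d}{ds}\big|_{s=0} T_m \psi_s(j^1_0\g)$, as required.

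I expect the main obstacle to be the bookkeeping in this last step: correctly matching the double curve $(t,s) \mapsto \psi_s(\g(t))$ to the conventions of the exchange map — which slot carries the $m$ directions of $\g$ and which carries the single flow direction $s$ — so that $e$ genuinely converts the velocity in the $TE$-fibre into the velocity of the curve $s \mapsto T_m \psi_s$. One can instead sidestep the invariant argument by passing to coordinates: from $u^a \circ T_m \psi_s = \psi_s^a$ and $u^a_i \circ T_m \psi_s = d_i \psi_s^a$, differentiate at $s = 0$ and use that $d_i$ commutes with $\frac{d}{ds}$ to read off the components $X^a$ and $d_i X^a$ of the generator, then check that these agree with the coordinate expression obtained for $e \circ T_m X$.
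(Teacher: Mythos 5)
Your proof is correct, but it follows a genuinely different route from the paper's. The paper works entirely in coordinates: it first computes the components of the generator of $s \mapsto T_m\psi_s$ by differentiating $\varphit^a = \varphi^a$, $\varphit^a_i = d_i\varphi^a$ at $s=0$, obtaining $X^a\,\p/\p u^a + (d_iX^a)\,\p/\p u^a_i$, and then separately computes the coordinate expression of $e \circ T_mX$ and observes that the two agree; your "fallback" coordinate argument in the last paragraph is essentially this. Your primary argument instead works invariantly at the level of representative double curves: both $T_mX(j^1_0\g)$ and the velocity of $s \mapsto T_m\psi_s(j^1_0\g)$ are exhibited as the two jets, in $T_mT_1E$ and $T_1T_mE$ respectively, of the single double curve $(t,s)\mapsto\psi_s(\g(t))$, so the exchange map of Lemma~\ref{L5} carries one to the other by its very definition ($\psi\mapsto\psihat$, $\psihat(t,s)=\psi(s,t)$) with no computation. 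The bookkeeping you flag does come out right: $T_mX(j^1_0\g)=j^1_0\bigl(t\mapsto j^1_0(s\mapsto\psi_s(\g(t)))\bigr)$ has the $m$ directions of $\g$ in the outer slot and the flow direction in the inner slot, and $e$ swaps these to give $j^1_0\bigl(s\mapsto j^1_0(\psi_s\circ\g)\bigr)$, which is the generator. What your approach buys is conceptual transparency (the statement is literally the symmetry of mixed partials packaged as the exchange map) and an explicit treatment of two points the paper leaves implicit, namely the one-parameter group property via functoriality of $T_m$ and the reduction of the flow equation at general $s$ to the generator at $s=0$; what the paper's computation buys is the explicit coordinate formula $X^1_m = X^a\,\p/\p u^a + (d_iX^a)\,\p/\p u^a_i$, which it needs repeatedly later on.
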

\begin{proof}
We first compute a coordinate formula for the vector field whose flow is $T_m \psi_s$.

Choose a point $j^1_0\g \in T_m E$ and let $\varphi$ be the flow of $X$ in a neighbourhood of $\g(0)$. Let $(U,y)$ be a chart around $\g(0)$ so that, if
\[
X = X^a \vf{u^a} \, ,
\]
$\varphi$ satisfies
\[
\pde{\varphi^a}{s}{(0,\cdot)} = X^a \, .
\]
Let $\varphit$ denote the map $(s,q) \mapsto T_m \varphi_s(q)$, so that
\[
\varphit^a = \varphi^a \, , \qquad \varphit^a_i = d_i \varphi^a
\]
where we define $(d_i \varphi^a)(s,q) = (d_i \varphi^a_s)(q)$. Then
\begin{align*}
\pde{\varphit^a_i}{s}{(0,\cdot)} & = \pde{(d_i \varphi^a)}{s}{(0,\cdot)} = \vfe{s}{(0,\cdot)} \biggl( u^b_i \pd{\varphi^a}{u^b} \biggr) \\
& \qquad = u^b_i \pdeb{\varphi^a}{u^b}{s}{(0,\cdot)} = \eval{d_i \biggl( \pd{\varphi^a}{s}{(0,\cdot)} \biggr)}{(0,\cdot)}
= d_i X^a \, ,
\end{align*}
so that, in coordinates, the vector field whose flow is $T_m \psi_s$ is
\[
X^a \vf{u^a} + (d_i X^a) \vf{u^a_i} \, .
\]
On the other hand, regarding $X$ as a map $E \to TE$, and writing $\udot^a$ as $u^a_1$,
\[
u^a \circ X = u^a \, , \qquad u^a_1 \circ X = X^a
\]
so that
\begin{gather*}
u^a \circ T_m X  = u^a \, , \qquad u^a_1 \circ T_m X = X^a \, , \\
u^a_{;i} \circ T_m X = u^a_i \, , \qquad u^a_{1i} \circ T_m X = d_i X^a \, ;
\end{gather*}
thus
\begin{gather*}
u^a \circ e \circ T_m X  = u^a \, , \qquad u^a_{;1} \circ e \circ T_m X = X^a \, , \\
u^a_i \circ e \circ T_m X = u^a_i \, , \qquad u^a_{i1} \circ e \circ T_m X = d_i X^a
\end{gather*}
so that
\[
X^1_m = e \circ T_m X = X^a \vf{u^a} + (d_i X^a) \vf{u^a_i} \, .
\]
\end{proof}
Unlike prolongations of maps, prolongations of vector fields \emph{do} restrict to $\Fm E$.


\subsection{Second prolongations}

By extending the first order approach, maps $f : E_1 \to E_2$ may be prolonged to maps $T^2_m f : T^2_m E_1 \to T^2_m E_2$, and vector fields $X$ on $E$ may be prolonged to vector fields $X^2_m$ on $T^2_m E$. In coordinates,
\[
u^a \circ T^2_m f = f^a \, , \qquad
u^a_i \circ T^2_m f = d_i f^a \, , \qquad
u^a_{ij} \circ T^2_m f = d_i d_j f^a
\]
and if $X = X^a \p / \p u^a$ then
\[
X^2_m = X^a \vf{u^a} + (d_i X^a) \vf{u^a_i} + \frac{1}{\#(ij)} (d_i d_j X^a) \vf{u^a_{ij}} \, .
\]
The calculations are similar in principle to those given for the first order case, but more complicated in detail. Again $T^2_m f$ might not restrict to a map $\Fmb E_1 \to \Fmb E_2$, whereas $X^2_m$ does restrict to $\Fmb E$.


\subsection{Prolongations, contact forms, and total derivatives}

Let $f : E_1 \to E_2$ be a map. If $\theta$ is a contact form on $\Fm E_2$ and if $T_m f$ restricts to $\Fm E_1$ then $(T_m f)^* \theta$ is a contact form on $\Fm E_1$, because
\[
(\jbar^1\g)^* (T_m f)^* \theta = (T_m f \circ \jbar^1\g)^* \theta = \bigl( \jbar^1(f \circ \g) \bigr)^* \theta = 0 \, .
\]
If $X$ is a vector field on $E$ and $\theta$ is a contact form on $\Fm E$ then the Lie derivative $\Lie_{X^1_m} \theta$ by the prolongation of $X$ is also a contact form, because the flow of $X^1_m$ is the prolongation of the flow of $X$. These results, using the characterisation of a contact form by vanishing pullback, apply to both $1$-forms and to $r$-forms with $r>1$. They also hold for contact forms on $\Fmb E$.

The corresponding result for total derivatives is more complicated, as these operators are vector fields along a map rather than on a manifold.
\begin{lem}
Prolongations and basis total derivatives commute, so that
\[
d_i \circ \Lie_X = \Lie_{X^1_m} \circ d_i \, , \qquad
d_i \circ \Lie_{X^1_m} = \Lie_{X^2_m} \circ d_i \, .
\]
\end{lem}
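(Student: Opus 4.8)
The plan is to read both identities as equalities of operators acting on functions. The first sends a function $f$ on $E$ to a function on $T_m E$, asserting $d_i(\Lie_X f) = \Lie_{X^1_m}(d_i f)$; the second sends a function $g$ on $T_m E$ to a function on $T^2_m E$, asserting $d_i(\Lie_{X^1_m} g) = \Lie_{X^2_m}(d_i g)$. Since a Lie derivative acting on a function is just the corresponding vector field applied to it, the content is $d_i(Xf) = X^1_m(d_i f)$ and $d_i(X^1_m g) = X^2_m(d_i g)$. One could verify these directly by substituting $d_i = u^a_i\,\p/\p u^a$, $X^1_m = X^a\,\p/\p u^a + (d_j X^a)\,\p/\p u^a_j$ (and their second-order analogues) and matching terms; but the cleaner route exploits the flows, so I would first isolate the geometric mechanism that makes $d_i$ behave well under prolongation.

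The key step is an \emph{equivariance} relation for the total derivative under prolongation of a diffeomorphism. Using the geometric description $\eval{d_i}{j^1_0\g} = T\g(\vfe{t^i}{0})$ together with $T_m\psi(j^1_0\g) = j^1_0(\psi\circ\g)$, I would compute
\[
\eval{d_i}{T_m\psi(j^1_0\g)} = T(\psi\circ\g)\biggl(\vfe{t^i}{0}\biggr) = T\psi\Bigl(\eval{d_i}{j^1_0\g}\Bigr) \, ,
\]
so that $d_i$ is carried to itself by the pair $(T_m\psi,\psi)$. Pairing with $df$ and using $\langle T\psi(v), df\rangle = \langle v, d(f\circ\psi)\rangle$ then yields, for every function $f$ on $E$, the pullback identity $(d_i f)\circ T_m\psi = d_i(f\circ\psi)$. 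Taking $\psi = \psi_s$ to be the (local) flow of $X$, differentiating at $s=0$, and using both that $T_m\psi_s$ is the flow of $X^1_m$ (shown above) and that $d_i$ is a fixed linear operator through which the $s$-derivative passes, the left-hand side gives $\Lie_{X^1_m}(d_i f)$ and the right-hand side gives $d_i(\Lie_X f)$, which is the first identity.

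For the second identity I would run the same argument one level up. From $\jbar^1(\psi\circ\g) = T_m\psi\circ\jbar^1\g$ and the geometric description $\eval{d_i}{j^2_0\g} = T(\jbar^1\g)(\vfe{t^i}{0})$ of the second-order total derivative, the chain rule gives
\[
\eval{d_i}{T^2_m\psi(j^2_0\g)} = T(T_m\psi)\Bigl(\eval{d_i}{j^2_0\g}\Bigr) \, ,
\]
whence $(d_i g)\circ T^2_m\psi = d_i(g\circ T_m\psi)$ for every function $g$ on $T_m E$. Differentiating at $s=0$ along $\psi_s$, now using that $T^2_m\psi_s$ is the flow of $X^2_m$ while $T_m\psi_s$ is the flow of $X^1_m$, produces $\Lie_{X^2_m}(d_i g) = d_i(\Lie_{X^1_m} g)$, as required.

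I expect the main obstacle to be bookkeeping rather than conceptual: because $d_i$ is a vector field along $\tau_{mE}$ (respectively along $\tau^{2,1}_{mE}$) and not a genuine vector field on the manifold, each pairing must be evaluated at the correct base point, and one must justify both moving the $s$-derivative through the fixed operator $d_i$ and the identifications $\langle T\psi(v), df\rangle = \langle v, d(f\circ\psi)\rangle$ and its $T(T_m\psi)$ analogue. Working with the local flow when $X$ is incomplete is harmless, since every assertion is pointwise.
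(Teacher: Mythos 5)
Your argument is correct, but it follows a genuinely different route from the paper. The paper simply computes in coordinates: it expands $d_i(\Lie_X f) = d_i(X^a\,\p f/\p u^a)$ and $\Lie_{X^1_m}(d_i f) = \Lie_{X^1_m}(u^b_i\,\p f/\p u^b)$ using the explicit formulas $d_i = u^a_i\,\p/\p u^a$ and $X^1_m = X^a\,\p/\p u^a + (d_jX^a)\,\p/\p u^a_j$, observes that the two expressions agree term by term (via $d_iX^b = u^a_i\,\p X^b/\p u^a$), and then merely asserts that ``a similar but slightly more lengthy calculation'' handles the second-order case. You instead isolate the naturality of the total derivative under prolonged diffeomorphisms, $\eval{d_i}{T_m\psi(j^1_0\g)} = T\psi\bigl(\eval{d_i}{j^1_0\g}\bigr)$, deduce the pullback identity $(d_if)\circ T_m\psi = d_i(f\circ\psi)$, and differentiate along the flow; this is coordinate-free, makes the geometric mechanism visible, and disposes of the second-order case by the same two lines rather than by an omitted longer computation. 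The cost is that your route leans on two facts the paper establishes only partially: that $T_m\psi_s$ is the flow of $X^1_m$ (proved as a proposition, so you may cite it) and that $T^2_m\psi_s$ is the flow of $X^2_m$ (only asserted in the section on second prolongations, so strictly you are trading one ``similar calculation'' for another). Your closing caveats --- evaluating the pairings at the correct base points since $d_i$ is a vector field along a map, and working with local flows --- are exactly the right points to flag, and none of them causes a problem. Both proofs are sound; yours is the more structural, the paper's the more elementary.
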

\begin{proof}
We check this using coordinates. In the first order case, if $f$ is a function on $E$ then
\[
d_i(\Lie_X f) = d_i \biggl( X^a \pd{f}{u^a} \biggr) = u^b_i \biggl( \pd{X^a}{u^b} \pd{f}{u^a} + X^a \pdb{f}{u^b}{u^a} \biggr)
\]
whereas
\[
\Lie_{X^1_m} (d_i f) = \Lie_{X^1_m} \biggl( u^b_i \pd{f}{u^b} \biggr)
= (d_i X^b) \pd{f}{u^b} + u^b_i X^a \pdb{f}{u^b}{u^a} \, .
\]
A similar but slightly more lengthy calculation is used in the second order case.
\end{proof}


\subsection{Vertical endomorphisms}

We have seen that $T_m E \to E$ is a vector bundle and so, as with every vector bundle, it has a canonical vertical lift operator. Denote the vertical lift to $(\eta_i) \in \bigoplus^m TE \cong T_m E$ by 
\[
T_{m | \tau_m(\eta_i)}E \to T_{(\eta_i)} T_m E \, , \qquad 
(\xi_k) \mapsto (\xi_k)^{\uparrow(\eta_i)} \, ;
\]
in coordinates this is
\[
(\xi_k)^{\uparrow(\eta_i)} = u^a_j(\xi_k) \vfe{u^a_j}{(\eta_i)} \, . 
\]
For each vector $\zeta \in T_{(\eta_i)} T_m E$ define the vector $S^j \zeta \in T_{(\eta_i)} T_m E$ by
\[
S^j \zeta = (0, \ldots, 0, T\tau_m(\zeta), 0, \ldots, 0)^{\uparrow (\eta_i)}
\]
where the non-zero vector $T\tau_m(\zeta)$ is in the $j$-th position. It is evident that $S^j$ is a vector bundle map $TT_m E \to TT_m E$, or alternatively a type $(1,1)$ tensor field on $T_m E$, called a \emph{vertical endomorphism}. Note that the superscript $j$ is a counting index, not a coordinate index. In coordinates
\[
S^j = du^a \otimes \vf{u^a_j} \, .
\]
There is a close relationship between vertical endomorphisms and total derivatives.
\begin{lem}
\label{L19}
If $\omega$ is an $r$-form on $E$ then
\[
S^j d_k \omega = r \d^j_k (\tau_{mE}^* \omega) \, .
\]
\end{lem}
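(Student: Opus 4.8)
The plan is to verify the identity by a local coordinate computation, having first reduced to a single monomial. Both sides are $\R$-linear in $\omega$ and the assertion is pointwise over $T_m E$, so it suffices to treat a decomposable form $\omega = f\, du^{b_1}\wedge\cdots\wedge du^{b_r}$ on the domain of a chart $(U;u^a)$, with its induced chart $(U^1;u^a,u^a_i)$ on $T_m E$. I would read the operators through their coordinate expressions established earlier: $d_k = u^a_k\vf{u^a}$ acts by contraction $i_{d_k}$ (degree $-1$), while the vertical endomorphism $S^j = du^a\otimes\vf{u^a_j}$, a vector-valued one-form, acts as its induced derivation $i_{S^j}\circ d - d\circ i_{S^j}$ of degree $+1$. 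Thus $S^j d_k$ is degree-preserving, carrying $r$-forms on $E$ to $r$-forms on $T_m E$, which matches the target $r\,\d^j_k\,\tau_{mE}^*\omega$.

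First I would contract, obtaining
\[
d_k\omega = f\sum_{s=1}^{r}(-1)^{s-1}u^{b_s}_k\,du^{b_1}\wedge\cdots\widehat{du^{b_s}}\cdots\wedge du^{b_r},
\]
the hat denoting omission and using $\langle du^{b_s},d_k\rangle = u^{b_s}_k$. The decisive observation is that $i_{S^j}$ annihilates every base one-form $du^a$ and every function, since $S^j$ carries the directions $\vf{u^a}$ into the vertical directions $\vf{u^a_j}$, on which $du^a$ vanishes. Hence $i_{S^j}(d_k\omega)=0$, so applying $S^j$ to $d_k\omega$ reduces to $i_{S^j}\,d(d_k\omega)$. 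When $d$ is applied, only the differential of the coefficient survives the subsequent contraction, and of its two pieces $i_{S^j}$ kills the $df$ term but sends $du^{b_s}_k$ to $\d^j_k\,du^{b_s}$; the re-wedging $du^{b_s}\wedge(du^{b_1}\wedge\cdots\widehat{du^{b_s}}\cdots) = (-1)^{s-1}du^{b_1}\wedge\cdots\wedge du^{b_r}$ then contributes a sign $(-1)^{s-1}$ that cancels the contraction sign. Each index $s$ therefore returns one full copy $f\,\d^j_k\,du^{b_1}\wedge\cdots\wedge du^{b_r}=\d^j_k\,\tau_{mE}^*\omega$, and summing over $s=1,\dots,r$ produces the factor $r$.

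I expect the only real content to be this combinatorial emergence of $r$: one must check that all $r$ summands survive with a common sign, which is exactly where the two occurrences of $(-1)^{s-1}$, one from the contraction and one from the re-wedging, conspire to cancel. Conceptually this is unsurprising, since $\sum_a du^a\wedge i_{\vf{u^a}}$ is the degree operator on forms, so that $S^j d_k$, restricted to forms pulled back from $E$, is simply $\d^j_k$ times multiplication by the form degree; the derivation $i_{S^j}\,d$ merely reinserts, weighted by $\d^j_k$, the base one-form that $i_{d_k}$ removed. The one point needing care beforehand is fixing the operator conventions so that degrees match, and with contraction for $d_k$ and the degree-$(+1)$ derivation for $S^j$ the bookkeeping above is routine.
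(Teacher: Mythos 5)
Your computation is correct, and the combinatorial point you isolate --- the two occurrences of $(-1)^{s-1}$, one from the contraction and one from the re-wedging, cancelling so that all $r$ terms contribute with the same sign --- is exactly the content of the lemma. Be aware, though, that you have read both operators differently from the paper. In the paper's proof, $d_k$ acting on forms is the degree-zero total (Lie) derivative, with $d_k(du^a)=du^a_k$ and $d_k(\theta_a du^a)=(d_k\theta_a)\,du^a+\theta_a\,du^a_k$, and $S^j$ acting on forms is the degree-zero derivation induced by the $(1,1)$-tensor $du^a\otimes\vf{u^a_j}$, killing $du^a$ and sending $du^a_k$ to $\d^j_k\,du^a$; the paper checks the case $r=1$ in coordinates and then inducts on $r$ using the derivation property together with the fact that $S^j$ annihilates forms pulled back from $E$. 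You instead take $d_k$ to be the contraction $i_{d_k}$ (degree $-1$) and $S^j$ to be the Cartan-type derivation $i_{S^j}\circ d-d\circ i_{S^j}$ (degree $+1$). For $\omega$ pulled back from $E$ the two composites coincide, precisely because $i_{S^j}$ annihilates both $i_{d_k}\omega$ and $d\omega$, so that $i_{S^j}\bigl(i_{d_k}d+d\,i_{d_k}\bigr)\omega=\bigl(i_{S^j}d-d\,i_{S^j}\bigr)i_{d_k}\omega$; hence your argument does prove the stated identity, and your remark that $du^a\wedge i_{\p/\p u^a}$ is the degree operator is a genuinely illuminating way to see where the factor $r$ comes from. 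The one caution is that this identification of the two readings is special to forms horizontal over (indeed pulled back from) $E$: the degree-zero reading is the one the paper relies on when it extends this result to forms on $T_m E$ in Lemma~\ref{L21}, where your conventions would produce a different left-hand side. So you should state explicitly, as above, why your composite agrees with the intended one on $r$-forms on $E$.
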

\begin{proof}
Suppose first that $\theta$ is a $1$-form; we shall give a proof in coordinates, omitting explicit mention of the pullback map. If $\theta = \theta_a du^a$ then
\[
S^j d_k \theta = S^j \bigl( (d_k \theta_a) du^a + \theta_a du^a_k \bigr) = \d^j_k \theta_a du^a = \d^j_k \theta \, .
\]
We now use induction on $r$. Suppose $\omega$ is an $r$-form and that $S^j d_k \omega = r \d^j_k (\tau_{mE}^* \omega)$; then
\begin{align*}
S^j d_k (\theta \wedge \omega) & = S^j \bigl( d_k \theta \wedge \tau_{mE}^* \omega + \tau_{mE}^* \theta \wedge d_k \omega \bigr) \\
& = S^j d_k \theta \wedge \tau_{mE}^* \omega + \tau_{mE}^* \theta \wedge S^j d_k \omega \\
& = \d^j_k (\tau_{mE}^* \theta \wedge \tau_{mE}^* \omega) + r \d^j_k (\tau_{mE}^* \theta \wedge \tau_{mE}^* \omega) \\
& = (r+1) \d^j_k \, \tau_{mE}^* (\theta \wedge \omega)
\end{align*}
using the fact that $\tau_{mE}^* \theta$ and $\tau_{mE}^* \omega$ are horizontal over $E$. The result now follows by linearity.
\end{proof}


\subsection{Second order vertical endomorphisms}

There is also a version of the vertical endomorphism defined on second order velocity manifolds. This cannot be constructed in the same way as the first order vertical endomorphism, as $T^2_m E \to T_m E$ is not a vector bundle but is instead an affine sub-bundle of $T_m T_m E \to T_m E$. We shall establish our construction by modifying the first-order vertical endomorphism on $T_m T_m E$. There is an alternative method, based on the construction of vertical lifts using double $(1,m)$-curves, which may be used in both first and second order cases, but we shall not describe that here.

So let $\nu : T^2_m E \to T_m E$ be some tubular neighbourhood of $T^2_m E$ in $T_m T_m E$, and let $\iota : T^2_m E \to T_m T_m E$ be the inclusion from Proposition~\ref{P4}. As before, let $e : T_m T_m E \to T_m T_m E$ be the exchange map.
\begin{prop}
Let $\theta$ be a $1$-form on $T^2_m E$; then the operation
\[
\theta \mapsto \iota^* \bigl( S^k \hook (\nu^* \theta + e^* \nu^* \theta) \bigr) \, ,
\]
where $S^k$ is the vertical endomorphism on $T_m T_m E$), does not depend on the choice of tubular neighbourhood map $\nu$ and hence defines a vertical endomorphism on $T^2_m E$.
\end{prop}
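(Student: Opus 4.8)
The plan is to show directly that the proposed operation $\theta \mapsto \iota^* \bigl( S^k \hook (\nu^* \theta + e^* \nu^* \theta) \bigr)$ is independent of the tubular neighbourhood map $\nu$, by considering two such maps $\nu$ and $\nu'$ and showing that the difference of the resulting $1$-forms on $T^2_m E$ vanishes. Since everything is linear in $\nu^* \theta$, it suffices to understand how $\nu^* \theta$ depends on $\nu$; writing $\theta = \theta_a \, du^a + \theta^i_a \, du^a_i + \tfrac{1}{2}\theta^{ij}_a \, du^a_{ij}$ in the coordinates $(u^a, u^a_i, u^a_{ij})$ on $T^2_m E$, the pullback $\nu^*\theta$ is determined by the differentials $d\nu^a$, $d\nu^a_i$, $d\nu^a_{ij}$, and these have already been computed in the excerpt using the tubular-neighbourhood constraints $\nu \circ \iota = \id$. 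The key observation is that the formul\ae\ for $d\nu^a$, $d\nu^a_i$, $d\nu^a_{ij}$ each split into a part that is \emph{independent} of $\nu$ (the leading symmetrised terms such as $du^a$, $\tfrac12(du^a_i + du^a_{;i})$, $\tfrac12(du^a_{i;j}+du^a_{j;i})$) together with correction terms each of which is proportional to one of the \emph{antisymmetric combinations} $du^c_p - du^c_{;p}$ or $du^c_{p;q} - du^c_{q;p}$.

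First I would apply the exchange map. Using the coordinate formul\ae\ for $e$ from Lemma~\ref{L5}, namely $u^a \circ e = u^a$, $u^a_i \circ e = u^a_{;i}$, $u^a_{;j}\circ e = u^a_j$, $u^a_{i;j}\circ e = u^a_{j;i}$, each antisymmetric combination is sent to its negative: $e^*(du^c_p - du^c_{;p}) = du^c_{;p} - du^c_p = -(du^c_p - du^c_{;p})$, and likewise $e^*(du^c_{p;q}-du^c_{q;p}) = -(du^c_{p;q}-du^c_{q;p})$. Meanwhile the symmetrised leading terms are fixed by $e^*$. Consequently, in the symmetrised form $\nu^*\theta + e^*\nu^*\theta$, every $\nu$-dependent correction term cancels against its image under $e^*$, leaving only the $\nu$-independent symmetric part. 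This is the heart of the argument: symmetrising over the exchange map kills exactly the freedom in the choice of tubular neighbourhood.

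Next I would contract with the first-order vertical endomorphism $S^k = du^c \otimes \p/\p u^c_k$ on $T_m T_m E$ and pull back along $\iota$. On the holonomic submanifold $\iota(T^2_m E)$ we have the identifications $u^a_i = u^a_{;i}$ and $u^a_{i;j} = u^a_{j;i}$ from Proposition~\ref{P4}, so the antisymmetric combinations restrict to zero along $\iota$ in any case; the point of the symmetrisation is that it removes the $\nu$-dependence \emph{before} restriction, at the level of differentials, so that the $1$-form is genuinely well-defined and not merely well-defined after a choice. I would then record the resulting coordinate expression, which depends only on the leading symmetric differentials and hence only on $\theta$ and the canonical structure, confirming that the construction defines a type $(1,1)$ tensor operation — the second-order vertical endomorphism — independent of $\nu$.

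The main obstacle I anticipate is bookkeeping rather than conceptual: one must track the contraction $S^k \hook$ carefully through the three blocks $d\nu^a$, $d\nu^a_i$, $d\nu^a_{ij}$, since $S^k$ acts by replacing a $du^c$ factor with $\p/\p u^c_k$ and the symmetrisation mixes the $\p/\p u^c_p$-type correction coefficients; verifying that the surviving terms assemble into a bona fide endomorphism of $TT^2_m E$ (rather than merely a form-valued object) requires checking the $S^k$-contraction interacts correctly with the restriction $\iota^*$. But since the cancellation of $\nu$-dependent terms is forced termwise by the sign reversal under $e^*$, no delicate estimate is needed, and the computation is entirely algebraic.
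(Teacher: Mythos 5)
Your overall strategy is the paper's: write $\theta$ in coordinates, pull back by $\nu$ using the tubular-neighbourhood constraints, exploit the sign reversal of the antisymmetric combinations under $e^*$, contract with $S^k$ and restrict with $\iota^*$. Executed carefully this does yield the $\nu$-independent answer $\theta^k_a\,du^a + 2\theta^{ik}_a\,du^a_i$. But the step you single out as ``the heart of the argument'' is false as stated: the $\nu$-dependent correction terms do \emph{not} cancel in $\nu^*\theta + e^*\nu^*\theta$ before restriction. A correction term such as $(\nu^*\theta_a)\,\pd{\nu^a}{u^c_p}\,(du^c_p - du^c_{;p})$ has as its $e^*$-image $e^*\bigl((\nu^*\theta_a)\,\pd{\nu^a}{u^c_p}\bigr)\cdot\bigl(-(du^c_p - du^c_{;p})\bigr)$, and the two coefficient functions are $e$-pullbacks of one another, so they agree only on the fixed-point set of $e$, i.e.\ on the holonomic submanifold; the cancellation therefore happens only after applying $\iota^*$, via $e\circ\iota=\iota$ (so $\iota^*e^*=\iota^*$). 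Likewise the ``leading symmetric part'' is not $\nu$-independent before restriction: its coefficients are $\nu^*\theta_a$, $\nu^*\theta^i_a$, $\nu^*\theta^{ij}_a$, which become $\theta_a$, $\theta^i_a$, $\theta^{ij}_a$ only after $\iota^*$, using $\nu\circ\iota=\id$. So the symmetrisation does not ``remove the $\nu$-dependence before restriction''; the restriction is doing essential work, and a proof organised around cancellation upstream of $\iota^*$ would not go through.

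A second, related slip is the remark that the antisymmetric combinations ``restrict to zero along $\iota$ in any case.'' That is true of the differentials themselves, since $\iota^*(du^c_p - du^c_{;p}) = d\,\iota^*(u^c_p - u^c_{;p}) = 0$, but it is irrelevant here because the contraction $S^k\hook{}$ is applied \emph{before} $\iota^*$: on $T_mT_mE$ one has $S^k = du^a\otimes\p/\p u^a_{;k} + du^a_i\otimes\p/\p u^a_{i;k}$, so $S^k\hook(du^c_p - du^c_{;p}) = -\delta^p_k\,du^c$, which does not restrict to zero. This is precisely why $\iota^*(S^k\hook\nu^*\theta)$ alone still depends on $\nu$ (through $\iota^*(\p\nu^a/\p u^c_k)$, which the constraint $\p\nu^a/\p u^c_p + \p\nu^a/\p u^c_{;p}=0$ does not determine) and why the $e^*$-symmetrisation is needed at all. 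The repair is mechanical --- compute $\iota^*(S^k\hook\nu^*\theta)$ and $\iota^*(S^k\hook e^*\nu^*\theta)$ separately, as the paper does, and observe the cancellation in the sum --- but as sketched your proof locates the key cancellation in the wrong place.
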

\begin{proof}
We use coordinates to show that the result is independent of $\nu$. Let $\theta = \theta_a du^a + \theta^i_a du^a_i + \theta^{ij}_a du^a_{ij}$; then
\begin{align*}
\nu^*\theta & = (\nu^* \theta_a) d\nu^a + (\nu^* \theta^i_a) d\nu^a_i + (\nu^* \theta^{ij}_a) d\nu^a_{ij} \\
& = (\nu^* \theta_a) \biggl( du^a + \pd{\nu^a}{u^c_p} (du^c_p - du^c_{;p}) 
+ \tfrac{1}{2} \pd{\nu^a}{u^c_{p;q}} (du^c_{p;q} - du^c_{q;p}) \biggr) \\*
& \qquad + (\nu^* \theta^i_a) \biggl( \tfrac{1}{2} (du^a_i + du^a_{;i})
+ \tfrac{1}{2} \biggl( \pd{\nu^a_i}{u^c_p} - \pd{\nu^a_i}{u^c_{;p}} \biggr) (du^c_p - du^c_{;p}) \\*
& \qquad\qquad + \tfrac{1}{2} \pd{\nu^a_i}{u^c_{p;q}} (du^c_{p;q} - du^c_{q;p}) \biggr) \\*
& \qquad + (\nu^* \theta^{ij}_a) \biggl( \tfrac{1}{2} (du^a_{i;j} +  du^a_{j;i}) + \pd{\nu^a_{ij}}{u^c_p} (du^c_p - du^c_{;p})
+ \tfrac{1}{2} \pd{\nu^a_{ij}}{u^c_{p;q}} (du^c_{p;q} - du^c_{q;p}) \biggr)
\end{align*}
using the coordinate formul\ae\ for the tubular neighbourhood map given in Section~2. Thus
\begin{align*}
S^k \hook \nu^* \theta
& = (\nu^* \theta_a) \biggl( - \pd{\nu^a}{u^c_k} du^c 
+ \tfrac{1}{2} \biggl( \pd{\nu^a}{u^c_{p;k}} - \pd{\nu^a}{u^c_{k;p}} \biggr) du^c_p \biggr) \\*
& \qquad + (\nu^* \theta^i_a) \biggl( \d^k_i \tfrac{1}{2} du^a
- \tfrac{1}{2} \biggl( \pd{\nu^a_i}{u^c_k} - \pd{\nu^a_i}{u^c_{;k}} \biggr) du^c \\*
& \qquad\qquad + \tfrac{1}{2} \biggl( \pd{\nu^a_i}{u^c_{p;k}} - \pd{\nu^a_i}{u^c_{k;p}} \biggr) du^c_p \biggr) \\*
& \qquad + (\nu^* \theta^{ij}_a) \biggl( \tfrac{1}{2} (\d^k_j du^a_i +  \d^k_i du^a_j) - \pd{\nu^a_{ij}}{u^c_k} du^c \\*
& \qquad\qquad + \tfrac{1}{2} \biggl( \pd{\nu^a_{ij}}{u^c_{p;k}} - \pd{\nu^a_{ij}}{u^c_{k;p}} \biggr) du^c_p \biggr)
\end{align*}
so that
\begin{align*}
\iota^* (S^k \hook \nu^* \theta)
& = \theta_a \biggl( - \iota^*\biggl( \pd{\nu^a}{u^c_k} \biggr) du^c 
+ \tfrac{1}{2} \iota^*\biggl( \pd{\nu^a}{u^c_{p;k}} - \pd{\nu^a}{u^c_{k;p}} \biggr) du^c_p \biggr) \\*
& \qquad + \theta^i_a \biggl( \d^k_i \tfrac{1}{2} du^a
- \tfrac{1}{2} \iota^*\biggl( \pd{\nu^a_i}{u^c_k} - \pd{\nu^a_i}{u^c_{;k}} \biggr) du^c \\*
& \qquad\qquad + \tfrac{1}{2} \iota^*\biggl( \pd{\nu^a_i}{u^c_{p;k}} - \pd{\nu^a_i}{u^c_{k;p}} \biggr) du^c_p \biggr) \\*
& \qquad + \theta^{ij}_a \biggl( \tfrac{1}{2} (\d^k_j du^a_i +  \d^k_i du^a_j) - \iota^* \biggl( \pd{\nu^a_{ij}}{u^c_k} \biggr) du^c \\*
& \qquad\qquad + \tfrac{1}{2} \iota^*\biggl( \pd{\nu^a_{ij}}{u^c_{p;k}} - \pd{\nu^a_{ij}}{u^c_{k;p}} \biggr) du^c_p \biggr) \, ;
\end{align*}
and similarly
\begin{align*}
S^k \hook e^*\nu^*\theta
& = (e^*\nu^* \theta_a) \biggl( e^* \biggl( \pd{\nu^a}{u^c_k} \biggr) du^c 
+ \tfrac{1}{2} e^*\biggl( \pd{\nu^a}{u^c_{k;p}} - \pd{\nu^a}{u^c_{p;k}} \biggr) du^c_p \biggr) \\*
& \qquad + (e^*\nu^* \theta^i_a) \biggl( \tfrac{1}{2} \d^k_i du^a
+ \tfrac{1}{2} e^*\biggl( \pd{\nu^a_i}{u^c_k} - \pd{\nu^a_i}{u^c_{;k}} \biggr) du^c \\*
& \qquad\qquad + \tfrac{1}{2} e^*\biggl( \pd{\nu^a_i}{u^c_{k;p}} - \pd{\nu^a_i}{u^c_{p;k}} \biggr) du^c_p \biggr) \\*
& \qquad + (e^*\nu^* \theta^{ij}_a) \biggl( \tfrac{1}{2} (\d^k_j du^a_i +  \d^k_i du^a_j) 
+ e^*\biggl( \pd{\nu^a_{ij}}{u^c_k} \biggr) du^c \\*
& \qquad\qquad + \tfrac{1}{2} e^*\biggl( \pd{\nu^a_{ij}}{u^c_{k;p}} - \pd{\nu^a_{ij}}{u^c_{p;k}} \biggr) du^c_p \biggr)
\end{align*}
so that, using $\iota^* e^* = \iota^*$,
\begin{align*}
\iota^*(S^k \hook e^*\nu^*\theta)
& = \theta_a \biggl( \iota^* \biggl( \pd{\nu^a}{u^c_k} \biggr) du^c 
+ \tfrac{1}{2} \iota^*\biggl( \pd{\nu^a}{u^c_{k;p}} - \pd{\nu^a}{u^c_{p;k}} \biggr) du^c_p \biggr) \\*
& \qquad + \theta^i_a \biggl( \tfrac{1}{2} \d^k_i du^a
+ \tfrac{1}{2} \iota^*\biggl( \pd{\nu^a_i}{u^c_k} - \pd{\nu^a_i}{u^c_{;k}} \biggr) du^c \\*
& \qquad\qquad + \tfrac{1}{2} \iota^*\biggl( \pd{\nu^a_i}{u^c_{k;p}} - \pd{\nu^a_i}{u^c_{p;k}} \biggr) du^c_p \biggr) \\*
& \qquad + \theta^{ij}_a \biggl( \tfrac{1}{2} (\d^k_j du^a_i +  \d^k_i du^a_j) 
+ \iota^*\biggl( \pd{\nu^a_{ij}}{u^c_k} \biggr) du^c \\*
& \qquad\qquad + \tfrac{1}{2} \iota^*\biggl( \pd{\nu^a_{ij}}{u^c_{k;p}} - \pd{\nu^a_{ij}}{u^c_{p;k}} \biggr) du^c_p \biggr) \, .
\end{align*}
Thus, adding, we obtain
\[
\iota^* (S^k \hook (\nu^* \theta + e^* \nu^* \theta))
= \theta^k_a du^a + 2 \theta^{ik}_a du^a_i
\]
using $\theta^{ki}_a = \theta^{ik}_a$.
\end{proof}
In coordinates, therefore, the second order vertical endomorphisms may be written as tensor fields
\[
S^k = du^a \otimes \vf{u^a_k} + \frac{2}{\#(ik)} du^a_i \otimes \vf{u^a_{ik}} \, ;
\]
the factor $1 / \#(ik)$ arises here because the contraction of $\p / \p u^a_{ik}$ with $du^c_{pq}$ equals $\frac{1}{2} \#(ik) \d^c_a (\d^i_p \d^k_q + \d^i_q \d^k_p)$, so that
\[
\vf{u^a_{ik}} \hook (\theta^{pq}_c du^c_{pq}) = \frac{\#(ik)}{2} \d^c_a (\d^i_p \d^k_q + \d^i_q \d^k_p) \theta^{pq}_c
= \#(ik) \, \theta^{ik}_a \, .
\]
The relationship given in Lemma~\ref{L19} between vertical endomorphisms and total derivatives may now be extended to a kind of homotopy formula.
\begin{lem}
\label{L21}
If $\omega$ is an $r$-form on $T_m E$ then
\[
S^j d_k \omega - d_k S^j \omega = r \, \d^j_k (\tau_{mE}^{2,1 \, *} \omega) \, .
\]
\end{lem}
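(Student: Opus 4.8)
The plan is to recognise both sides as derivations over the pull-back $\tau^{2,1}_{mE}$ and then verify the identity on a generating set of forms, in the spirit of Lemma~\ref{L19}. Write $\tau^* = (\tau^{2,1}_{mE})^*$ throughout. First I would record the elementary facts needed about the two operators. The second order total derivative is a derivation commuting with $d$ and satisfies $d_k(du^a) = du^a_k$ and $d_k(du^a_l) = du^a_{lk}$. The second order vertical endomorphism, acting by insertion of a $(1,1)$-tensor, is a degree-$0$ derivation satisfying $S^j(du^a) = 0$, $S^j(du^a_l) = \d^l_j\,du^a$ and $S^j(du^a_{lm}) = \d^j_l\,du^a_m + \d^j_m\,du^a_l$. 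This last formula is where the symmetry $u^a_{lm}=u^a_{ml}$ enters: it follows from the contraction rule $\vf{u^b_{ij}} \hook du^a_{lm} = \half\#(ij)\,\d^a_b(\d^i_l\d^j_m + \d^i_m\d^j_l)$ combined with the factor $2/\#(ij)$ in the coordinate expression for $S^j$, the two $\#(ij)$ weights cancelling.

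The key structural observation is that $S^j\omega$ again descends to a form on $T_m E$: since $S^j$ annihilates $du^a$ and sends $du^a_l$ to a multiple of $du^a$, while leaving the coefficient functions (which depend only on $u^a, u^a_l$) untouched, $S^j\omega$ involves only the factors $du^a$ with coefficients pulled back from $T_m E$. Hence $d_k(S^j\omega)$ is the ordinary total derivative of a form on $T_m E$, and the left-hand side is well defined. With this in hand I would show that $D := S^j d_k - d_k S^j$ is a derivation over $\tau^*$, that is $D(\alpha\wedge\beta) = D\alpha\wedge\tau^*\beta + \tau^*\alpha\wedge D\beta$. Expanding $S^j d_k(\alpha\wedge\beta)$ and $d_k S^j(\alpha\wedge\beta)$ using the derivation properties of $S^j$ and $d_k$, each produces the two expected diagonal terms together with two cross terms of the shape $d_k\alpha\wedge S^j\beta$ and $S^j\alpha\wedge d_k\beta$; these cross terms are identical in the two expansions and so cancel in the difference. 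The right-hand side $R:\omega\mapsto r\,\d^j_k\tau^*\omega$ is visibly a derivation over $\tau^*$ as well, since the degree operator $\omega\mapsto r\omega$ is.

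It then remains to check $D = R$ on generators, namely on functions and on the exact $1$-forms $du^a$ and $du^a_l$; since both $D$ and $R$ vanish on functions they are tensorial, and agreement on $du^a, du^a_l$ forces agreement on all forms. For a function $f$ on $T_m E$ both sides vanish, as $S^j$ annihilates $0$-forms. For $du^a$ one has $S^j d_k\,du^a = S^j(du^a_k) = \d^j_k\,du^a$ while $d_k S^j\,du^a = 0$, matching $r=1$. For $du^a_l$ one has $S^j d_k\,du^a_l = S^j(du^a_{lk}) = \d^j_k\,du^a_l + \d^j_l\,du^a_k$ and $d_k S^j\,du^a_l = d_k(\d^l_j\,du^a) = \d^l_j\,du^a_k$; subtracting, the two $du^a_k$ terms cancel and leave $\d^j_k\,du^a_l$, again as required.

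The step I expect to be the main obstacle is not the derivation bookkeeping but the correct evaluation of $S^j$ on the second order factors $du^a_{lm}$: the symmetric coordinates and the compensating $\#(ij)$ weights in $S^j$ must be handled exactly, and it is precisely the identity $S^j(du^a_{lm}) = \d^j_l\,du^a_m + \d^j_m\,du^a_l$ that produces the cancellation making the $du^a_l$ generator case reduce to the single term $\d^j_k\,du^a_l$. Everything else is the formal derivation argument, which is forced once this contraction is pinned down.
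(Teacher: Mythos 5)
Your proposal is correct and follows essentially the same route as the paper's proof: establish the identity on $1$-forms by direct coordinate computation (your generator check on $du^a$ and $du^a_l$, including the careful evaluation $S^j(du^a_{lk})=\d^j_l\,du^a_k+\d^j_k\,du^a_l$ with the $\#$-weights, is the same calculation the paper performs for a general coordinate $1$-form), and then extend to $r$-forms by observing that the commutator $S^jd_k-d_kS^j$ is a degree-zero derivation over the pullback $\tau^{2,1\,*}_{mE}$. Your explicit cancellation of the cross terms is a slightly more detailed justification of the step the paper states tersely, but the argument is the same.
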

\begin{proof}
Suppose first that $\theta$ is a $1$-form; we shall give a proof in coordinates, omitting explicit mention of the pullback map. If $\theta = \theta_a du^a + \theta^i_a du^a_i$ then
\[
d_k \theta = (d_k \theta_a) du^a + \theta_a du^a_k + (d_k \theta^i_a) du^a_i + \theta^i_a du^a_{ik}
\]
so that
\[
S^j d_k \theta = \bigl( \d^j_k \theta_a + (d_k \theta^j_a) \bigr) du^a + \d^j_k \theta^i_a  du^a_i + \theta^j_a  du^a_k \, .
\]
On the other hand, $S^j \theta = \theta^j_a du^a$, so that
\[
d_k S^j \theta = (d_k \theta^j_a) du^a + \theta^j_a du^a_k
\]
and hence
\[
S^j d_k \theta - d_k S^j \theta = \d^j_k \theta_a du^a + \d^j_k \theta^i_a  du^a_i = \d^j_k \theta \, .
\]
We now use induction on $r$. Suppose $\omega$ is an $r$-form and that $S^j d_k \omega - d_k S^j \omega = r \d^j_k (\tau_{mE}^{2,1 \, *} \omega)$; then, as both $S^j$ and $d_k$ are derivations of degree zero, their commutator is a derivation of degree zero, and so
\begin{align*}
(S^j d_k - d_k S^j)(\theta \wedge \omega) & = (S^j d_k - d_k S^j) \theta \wedge \tau_{mE}^{2,1 \, *} \omega
+ \tau_{mE}^{2,1 \, *} \theta \wedge (S^j d_k - d_k S^j) \omega \\
& = r \d^j_k \, \tau_{mE}^{2,1 \, *}\theta \wedge \tau_{mE}^{2,1 \, *} \omega 
+ \d^j_k \tau_{mE}^{2,1 \, *}\theta \wedge \tau_{mE}^{2,1 \, *} \omega \\
& = (r+1) \d^j_k \tau_{mE}^{2,1 \, *}(\theta \wedge \omega) \, .
\end{align*}
The result now follows by linearity.
\end{proof}


\section{Vector forms}

We often use \emph{vectors} of operators, tensors, forms, and so on. For instance, we have defined the total derivatives $d_k$ and the vertical endomorphisms $S^j$, where $j$ and $k$ are counting indices rather than coordinate indices. These operators fit into a framework of \emph{vector forms}, to which we can associate a cohomology theory. Although the full cohomology theory requires the use of higher-order velocity manifolds, we can see some aspects of the theory in the first and second order cases.


\subsection{Vector forms}

We consider differential forms on $E$, $\Fm E$ and $\Fmb E$ taking values in the vector space $\R^{m*}$ and its exterior powers. Write $\Fmk E$ with $k=0,1,2$ and put
\[
\Omega^{r,s}_k = \left( \Omega^r \Fmk E \right) \otimes \left( \bw^s \R^{m*} \right) \, .
\]
Then a typical element of $\Omega^{r,s}_k$ is
\[
\Xi = \chi_{i_1 \cdots i_s} \otimes dt^{i_1} \wedge \ldots \wedge dt^{i_s} \in \Omega^{r,s}_k 
\]
where the scalar forms $\chi_{i_1 \cdots i_s}$ are skew-symmetric in their indices, and where, as in Corollary~\ref{C2}, $\{dt^i\}$ is the canonical basis of $\R^{m*}$. It is clear that $\Omega^{r,s}_k$ is a module over the algebra of functions on $\Fmk E$.


\subsection{Operations on vector forms}

Define the operators $d$ and $\dT$ on the modules of vector forms by their actions on decomposable forms,
\begin{align*}
d : \Omega^{r,s}_k \to \Omega^{r+1,s}_k \, , \qquad
& d(\chi \otimes \omega) = d\chi \otimes \omega \\
\dT : \Omega^{r,s}_k \to \Omega^{r,s+1}_{k+1} \, , \qquad
& \dT(\chi \otimes \omega) = d_i \chi \otimes (dt^i \wedge \omega) \, ,
\end{align*}
so that
\begin{align*}
d\dT(\chi\otimes\omega) & = d \bigl( d_i \chi \otimes (dt^i \wedge \omega) \bigr) = dd_i \chi \otimes (dt^i \wedge \omega) \\
& \qquad \qquad = d_i d\chi \otimes (dt^i \wedge \omega) = \dT(d\chi \otimes \omega) = \dT d(\chi\otimes\omega)
\end{align*}
and
\[
\dT^2(\chi\otimes\omega) = d_j d_i \chi \otimes (dt^j \wedge dt^i \wedge \omega) = 0 \, ,
\]
showing that $d \dT = \dT d$ and $\dT^2 = 0$. We say that $d\Xi$ is the \emph{differential} of the vector form $\Xi$, and that $\dT\Xi$ is its \emph{total derivative}.

The total derivative of a vector form is a type of Lie derivative, and so we can also define the corresponding contraction operation. Put
\[
\iT : \Omega^{r,s}_k \to \Omega^{r-1,s+1}_{k+1} \, , \qquad
\iT(\chi \otimes \omega) = (d_i \hook \chi) \otimes dt^i \wedge \omega
\]
where $d_i \hook \chi$ denotes the contraction of the `vector field along a map' $d_i$ with the scalar form $\chi$, so that
\[
\dT = d \iT + \iT d \, .
\] 


\subsection{Equivariant vector forms}

Let $\alpha_{j^1_0\phi} : \Fm E \to \Fm E$ denote the right action of $j^1_0\phi \in L^{1+}_m$ on $\Fm E$ by
\[
\alpha_{j^1_0\phi}(j^1_0\g) = j^1_0(\g\circ\phi) \, ;
\]
also, let $A_{j^1_0\phi} : \R^{m*} \to \R^{m*}$ denote the linear map
\[
A_{j^1_0\phi}(dt^i) = \bigl( D_j \phi^i(0)\bigr)  dt^j \, ,
\]
and extend this by multilinearity to $A_{j^1_0\phi} : \bw^s \R^{m*} \to \bw^s \R^{m*}$. The vector form $\chi_{i_1 \cdots i_s} \otimes (dt^{i_1} \wedge \cdots \wedge dt^{i_s}) \in \Omega^{r,s}_1$ is said to be \emph{equivariant} if, for every $j^1_0\phi$,
\[
\alpha_{j^1_0\phi}^*(\chi_{i_1 \cdots i_s}) \otimes (dt^{i_1} \wedge \cdots \wedge dt^{i_s})
= \chi_{i_1 \cdots i_s} \otimes A_{j^1_0\phi} (dt^{i_1} \wedge \cdots \wedge dt^{i_s}) \, .
\]
Thus an equivariant form, regarded as a map from objects defined on a velocity manifold to elements of a vector space, commutes with the action of the jet group on the manifold and the vector space. We use the \emph{oriented} jet group in our definition, as our application will be to problems in the calculus of variations where we need to integrate the forms.

We shall be particularly interested in equivariant elements of $\Omega^{0,m}_1$, namely $0$-forms (functions) taking their values in the one-dimensional vector space $\bw^m \R^{m*}$. Then
\[
A_{j^1_0\phi} (dt^1 \wedge \cdots \wedge dt^m) = \J\phi(0) (dt^1 \wedge \cdots \wedge dt^m)
\]
where $\J\phi = \det (D_j \phi^i)$ is the Jacobian of $\phi$, and so, writing $d^m t$ for $dt^1 \wedge \cdots \wedge dt^m$, an element $\Lambda = L \, d^m t$ is equivariant when
\[
(L \circ \alpha_{j^1_0\phi}) d^m t = \det \bigl( D_j \phi^i(0) \bigr) L \, d^m t \, .
\]
Thus, writing an element of $T_m E \cong \bigoplus^m TE$ as $(\xi_1, \ldots, \xi_m)$, $\Lambda$ is equivariant when for each matrix $A \in \GL^+(m,\R)$,
\[
L(\xi_i A^i_j) = (\det A) L(\xi_j) \, .
\]

As the oriented jet group $L^{1+}_m$ is connected, there is an infinitesimal condition for equivariance. For a vector form $\chi_{i_1 \cdots i_s} \otimes (dt^{i_1} \wedge \cdots \wedge dt^{i_s}) \in \Omega^{r,s}_1$, we require
\[
d^j_i (\chi_{i_1 \cdots i_s}) \otimes (dt^{i_1} \wedge \cdots \wedge dt^{i_s})
= \chi_{i_1 \cdots i_s} \otimes \Lie_{t^j \p/\p t^i}(dt^{i_1} \wedge \cdots \wedge dt^{i_s})
\]
In the particular case where $s=m$ we have $\Lie_{t^j \p/\p t^i} d^m t = \d^j_i d^m t$, so the condition simplifies to
\[
d^j_i \chi = \d^j_i \chi \, .
\]


\subsection{The bicomplex}

It is clear that for $-1 \le s \le m-2$ we can use the operators $d$ and $\dT$ to construct a bicomplex:
\begin{center}
\begin{picture}(300,120)(50,-20)
\multiput(98,70)(60,0){4}{\vector(0,-1){20}}
\multiput(98,30)(60,0){4}{\vector(0,-1){20}}
\multiput(70,80)(0,-40){3}{\vector(1,0){20}}
\multiput(130,80)(0,-40){3}{\vector(1,0){20}}
\multiput(190,80)(0,-40){3}{\vector(1,0){20}}
\multiput(250,80)(0,-40){3}{\vector(1,0){20}}
\multiput(310,80)(0,-40){3}{\vector(1,0){20}}
\multiput(60,80)(0,-40){3}{\makebox(0,0){$0$}}
\put(105,80){\makebox(0,0){$\overline{\Omega}^{0,s}_0$}}
\put(165,80){\makebox(0,0){$\Omega^{1,s}_0$}}
\put(225,80){\makebox(0,0){$\Omega^{2,s}_0$}}
\put(285,80){\makebox(0,0){$\Omega^{3,s}_0$}}
\put(110,40){\makebox(0,0){$\overline{\Omega}^{0,s+1}_1$}}
\put(170,40){\makebox(0,0){$\Omega^{1,s+1}_1$}}
\put(230,40){\makebox(0,0){$\Omega^{2,s+1}_1$}}
\put(290,40){\makebox(0,0){$\Omega^{3,s+1}_1$}}
\put(110,0){\makebox(0,0){$\overline{\Omega}^{0,s+2}_2$}}
\put(170,0){\makebox(0,0){$\Omega^{1,s+2}_2$}}
\put(230,0){\makebox(0,0){$\Omega^{2,s+2}_2$}}
\put(290,0){\makebox(0,0){$\Omega^{3,s+2}_2$}}
\multiput(135,85)(0,-40){3}{\makebox(0,0)[b]{$\scriptstyle d$}}
\multiput(200,85)(0,-40){3}{\makebox(0,0)[b]{$\scriptstyle d$}}
\multiput(255,85)(0,-40){3}{\makebox(0,0)[b]{$\scriptstyle d$}}
\multiput(95,60)(0,-40){2}{\makebox(0,0)[r]{$\scriptstyle \dT$}}
\multiput(155,60)(0,-40){2}{\makebox(0,0)[r]{$\scriptstyle \dT$}}
\multiput(215,60)(0,-40){2}{\makebox(0,0)[r]{$\scriptstyle \dT$}}
\multiput(275,60)(0,-40){2}{\makebox(0,0)[r]{$\scriptstyle \dT$}}
\end{picture}
\end{center}
where if $s=-1$ then $\Omega^{*,s}_* = 0$. In this bicomplex $\overline{\Omega}^{0,*}_*$ means `modulo constant functions', and is used instead of the usual beginning $0 \to \R \to \Omega^0 \to \ldots$ of the de Rham sequence.

An important property of the bicomplex is that all columns (apart from the first) are globally exact, we show this by obtaining a homotopy formula for $\dT$. Strictly speaking the homotopy formula involves \emph{third order} forms which are horizontal over $E$, because the operator $P_2$ defined in the statement of the theorem involves applying a total derivative to (scalar) second-order forms which are horizontal over $E$; but if $\dT\Xi = 0$ then the operator $P_2$ is not involved and the formula is genuinely second order. We feel, nevertheless, that it is worthwhile giving the more general statement, on the understanding that the definition of the total derivative of a second order form, and the consequent generalisation of Lemma~\ref{L21}, follow exactly the same pattern as before. We also use the operator $P_2$ when studying equivalents of first-order Lagrangians, although in that context the image of $P_2$ is always second-order rather than third-order.
\begin{theorem}
\label{T22}
If $\Xi \in \Omega^{r,s+1}_1$ with $r>0$ then, to within a pullback,
\[
P_2 \dT \Xi + \dT P_1 \Xi = \Xi \, ,
\]
where
\[
P_1 \bigl( \chi_{i_1 \cdots i_{s+1}} \otimes dt^{i_1} \wedge \cdots \wedge dt^{i_{s+1}} \bigr)
= \frac{1}{r(m-s)} S^j \chi_{i_1 \cdots i_{s+1}} \otimes \biggl( \vf{t^j} \hook dt^{i_1} \wedge \cdots \wedge dt^{i_{s+1}} \biggr)
\]
for first-order $r$-forms $\chi_{i_1 \cdots i_{s+1}}$, and
\begin{align*}
\lefteqn{P_2 \bigl( \eta_{i_1 \cdots i_{s+2}} \otimes dt^{i_1} \wedge \cdots \wedge dt^{i_{s+2}} \bigr)} \\
& = \biggl( \frac{1}{r(m-s-1)} S^j  \eta_{i_1 \cdots i_{s+2}}
- \frac{1}{r^2(m-s)(m-s-1)}d_l S^l S^j \eta_{i_1 \cdots i_{s+2}} \biggr) \otimes \\*
& \qquad \otimes \biggl( \vf{t^j} \hook dt^{i_1} \wedge \cdots \wedge dt^{i_{s+1}} \biggr)
\end{align*}
for second-order $r$-forms $\eta_{i_1 \cdots i_{s+2}}$.
\end{theorem}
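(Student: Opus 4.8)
Throughout I would reduce the general case to a monomial vector form by $\R$-linearity, writing $\Xi = \chi\otimes\omega$ with $\omega = dt^{i_1}\wedge\cdots\wedge dt^{i_{s+1}}\in\bw^{s+1}\R^{m*}$ and $\chi$ a first-order $r$-form, and then compute $\dT P_1\Xi$ and $P_2\dT\Xi$ directly in coordinates. Two families of operators interact: on the manifold side the vertical endomorphisms $S^k$ and the total derivatives $d_k$, and on the coefficient side the wedge maps $dt^k\wedge(\cdot)$ and the contractions $\vf{t^k}\hook(\cdot)$ on $\bw^\bullet\R^{m*}$. The whole argument rests on two commutation rules. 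On the manifold side I would use Lemma~\ref{L21} in the form $S^j d_k - d_k S^j = r\,\d^j_k$ (modulo the pullback indicated there), together with its second-order analogue, obtained by exactly the same pattern, since $\dT\Xi$ has second-order coefficients $d_k\chi$. On the coefficient side I would use the standard exterior-algebra anticommutator $\bigl(dt^k\wedge\bigr)\bigl(\vf{t^j}\hook\cdot\bigr)+\bigl(\vf{t^j}\hook\bigr)\bigl(dt^k\wedge\cdot\bigr)=\d^k_j$ together with the degree counts $\sum_k\vf{t^k}\hook\bigl(dt^k\wedge\omega\bigr)=(m-p)\,\omega$ and $\sum_k dt^k\wedge\bigl(\vf{t^k}\hook\omega\bigr)=p\,\omega$ for $\omega\in\bw^p\R^{m*}$.

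The first substantive step is to see where $\Xi$ itself is produced. Applying $\dT$ gives coefficients $d_k\chi$, and feeding this into the first part of $P_2$ produces, after commuting by Lemma~\ref{L21}, a term $\tfrac1{m-s-1}\,\d^j_k\chi\otimes\bigl(\vf{t^j}\hook(dt^k\wedge\omega)\bigr)$. The surviving contraction is precisely $\sum_k\vf{t^k}\hook\bigl(dt^k\wedge\omega\bigr)=(m-s-1)\,\omega$ on $\bw^{s+1}\R^{m*}$, so this single term collapses to $\chi\otimes\omega=\Xi$. Thus $\Xi$ arises from the commutator inside $P_2\dT\Xi$, and the entire remaining task is to show that all other contributions cancel; the role of $\dT P_1\Xi$ and of the rest of $P_2\dT\Xi$ is purely to effect these cancellations.

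The cancellations are the combinatorial core, and they force the coefficients in $P_1$ and $P_2$. Collecting every term of the shape $d_k S^j\chi\otimes\bigl(dt^k\wedge(\vf{t^j}\hook\omega)\bigr)$ coming from $\dT P_1\Xi$, from the $d_k S^j\chi$ part of $P_2\dT\Xi$, and from the double-endomorphism correction, the three coefficients are $\tfrac1{r(m-s)}$, $-\tfrac1{r(m-s-1)}$ and $\tfrac1{r(m-s)(m-s-1)}$, and they sum to zero by
\[
\frac1{m-s}-\frac1{m-s-1}+\frac1{(m-s)(m-s-1)}=0 \, .
\]
A parallel bookkeeping for the diagonal terms $d_j S^j\chi\otimes\omega$ is what explains the correction term $-\tfrac1{r^2(m-s)(m-s-1)}d_l S^l S^j\eta$ in $P_2$: iterating Lemma~\ref{L21} inside it reproduces a diagonal contribution that cancels the piece $\tfrac1{r(m-s-1)}d_j S^j\chi\otimes\omega$ left over from $\dT P_1\Xi$ and the first part of $P_2\dT\Xi$, once more by an elementary partial-fraction identity
\[
\frac1{m-s-1}-\frac1{(m-s)(m-s-1)}-\frac1{m-s}=0 \, .
\]

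The main obstacle is this double layer of bookkeeping inside the correction term: expanding $d_l S^l S^j\eta$ requires applying Lemma~\ref{L21} (and its second-order form) twice, which generates several descendant terms — a genuinely third-order piece $d_l d_k S^l S^j\chi$, an order-shifted piece $r\,d_k S^j\chi$, and a $\d^j_k$ trace — and one must verify that each lands in the correct column and that, apart from $\Xi$, everything cancels. Care is also needed with the symmetrisation factors $\#(ik)$ built into the second-order vertical endomorphism and the second-order total derivative. Finally, the surviving third-order term $d_l d_k S^l S^j\chi$ is exactly the ``horizontal over $E$'' third-order contribution flagged before the statement; it is what the phrase ``to within a pullback'' absorbs, and it disappears when $\dT\Xi=0$, in which case $P_2$ is not applied and the formula is genuinely second order.
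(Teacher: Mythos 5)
Your overall strategy is the paper's own: reduce to the operator identity coming from Lemma~\ref{L21}, split the coefficient-side contraction $\vf{t^j} \hook (dt^k \wedge \omega)$ into its trace and wedge parts, and verify that the numerical coefficients in $P_1$ and $P_2$ make everything except $\Xi$ cancel via the partial-fraction identities you write down. I checked your bookkeeping for the three families of terms (the identity term from the trace of the commutator, the $d_k S^j \chi \otimes dt^k \wedge (\vf{t^j}\hook\omega)$ terms with coefficients $\tfrac{1}{r(m-s)}$, $-\tfrac{1}{r(m-s-1)}$, $\tfrac{1}{r(m-s)(m-s-1)}$, and the diagonal $d_l S^l \chi \otimes \omega$ terms) and it agrees with the paper's computation, which organises the same cancellation as the single operator identity $\d^j_{i_1} P^k_2 d_k - (s+1) P^j_2 d_{i_1} + (s+1) d_{i_1} P^j_1 = \d^j_{i_1}$ after using skew-symmetry of the $\chi_{i_1\cdots i_{s+1}}$.

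The one genuine error is in your final paragraph, where you say the ``surviving third-order term $d_l d_k S^l S^j \chi$'' is ``what the phrase `to within a pullback' absorbs.'' That term does not survive and cannot be absorbed: it is a third-order form with no counterpart on the other side of the identity, so if it were genuinely present the homotopy formula would simply be false. It vanishes identically because $S^l S^j \chi = 0$ for a first-order form $\chi$ --- $S^j \chi$ is horizontal over $E$ (it has no $du^a_i$ components), so a second application of a vertical endomorphism kills it --- and the paper invokes exactly this fact (``$S^l S^k \chi_{i_1 \cdots i_{s+1}} = 0$ because the $\chi_{i_1 \cdots i_{s+1}}$ are first-order forms''). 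Note also that your cancellation coefficients for the correction term already presuppose this vanishing: the contribution $\tfrac{1}{r(m-s)(m-s-1)}$ comes from expanding $d_l S^l S^j d_k \chi = r\, d_k S^j \chi + r\, \d^j_k d_l S^l \chi$, which is only correct once the $d_l d_k S^l S^j \chi$ piece has been discarded. The phrase ``to within a pullback'' refers to something else entirely, namely the omitted projection maps such as $\tau_{mE}^{2,1\,*}$ appearing in Lemma~\ref{L21} and in the identification of $\Xi$ with its pullback to the second-order manifold. With that one correction your argument is complete and coincides with the paper's.
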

\begin{proof}
This is a consequence of Lemma~\ref{L21}. Put
\begin{align*}
P^j_1 & = \frac{1}{r(m-s)} S^j \\
P^j_2 & = \frac{1}{r(m-s-1)} S^j - \frac{1}{r^2(m-s)(m-s-1)}d_l S^l S^j \, ;
\end{align*}
then
\begin{align*}
P_2 \dT \Xi 
& = P_2 \bigl( d_k\chi_{i_1 \cdots i_{s+1}} \otimes  dt^k \wedge dt^{i_1} \wedge \cdots \wedge dt^{i_{s+1}} \bigr) \\
& = P^j_2 d_k \chi_{i_1 \cdots i_{s+1}} 
\otimes \biggl( \vf{t^j} \hook dt^k \wedge dt^{i_1} \wedge \cdots \wedge dt^{i_{s+1}} \biggr) \\
& = P^j_2 d_k \chi_{i_1 \cdots i_{s+1}} \otimes 
(\d^k_j \, dt^{i_1} \wedge \cdots \wedge dt^{i_{s+1}}) \\*
& \qquad - P^j_2 d_k \chi_{i_1 \cdots i_{s+1}} \otimes 
dt^k \wedge \biggl( \vf{t^j} \hook dt^{i_1} \wedge \cdots \wedge dt^{i_{s+1}} \biggr) \\
& = P^k_2 d_k \chi_{i_1 \cdots i_{s+1}} \otimes 
dt^{i_1} \wedge \cdots \wedge dt^{i_{s+1}} \\*
& \qquad - (s+1)P^j_2 d_{i_1} \chi_{j i_2 \cdots i_{s+1}} \otimes 
dt^{i_1} \wedge dt^{i_2} \wedge \cdots \wedge dt^{i_{s+1}}
\end{align*}
whereas
\begin{align*}
\dT P_1 \Xi
& = \dT \biggl( P^j_1 \chi_{i_1 \cdots i_{s+1}} 
\otimes \biggl( \vf{t^j} \hook dt^{i_1} \wedge \cdots \wedge dt^{i_{s+1}} \biggr) \biggr) \\
& = d_k P^j_1 \chi_{i_1 \cdots i_{s+1}} 
\otimes dt^k \wedge \biggl( \vf{t^j} \hook dt^{i_1} \wedge \cdots \wedge dt^{i_{s+1}} \biggr) \\
& = (s+1) d_{i_1} P^j_1 \chi_{j i_2 \cdots i_{s+1}} 
\otimes dt^{i_1} \wedge dt^{i_2} \wedge \cdots \wedge dt^{i_{s+1}}
\end{align*}
so that
\begin{align*}
P_2 \dT \Xi + \dT P_1 \Xi 
& = P^k_2 d_k \chi_{i_1 \cdots i_{s+1}} \otimes 
dt^{i_1} \wedge \cdots \wedge dt^{i_{s+1}} \\*
& \qquad - (s+1)P^j_2 d_{i_1} \chi_{j i_2 \cdots i_{s+1}} \otimes 
dt^{i_1} \wedge \cdots \wedge dt^{i_{s+1}} \\*
& \qquad + (s+1)d_{i_1} P^j_1 \chi_{j i_2 \cdots i_{s+1}} 
\otimes dt^{i_1} \wedge \cdots \wedge dt^{i_{s+1}} \, .
\end{align*}
But, using Lemma~\ref{L21}, the operators acting on $\chi_{j i_2 \cdots i_{s+1}}$ satisfy
\begin{align*}
\d^j_{i_1} P^k_2 d_k
& = \frac{1}{r(m-s-1)} \d^j_{i_1} S^k d_k - \frac{1}{r^2(m-s)(m-s-1)} \d^j_{i_1} d_l S^l S^k d_k \\
& = \frac{1}{r(m-s-1)} \d^j_{i_1} (d_k S^k + mr) \\*
& \qquad - \frac{1}{r^2(m-s)(m-s-1)} \d^j_{i_1} (d_l d_k S^l S^k + (m+1)r d_l S^l) \\
& = \frac{m}{m-s-1} \d^j_{i_1} - \frac{s+1}{r(m-s)(m-s-1)} \d^j_{i_1} d_k S^k \, ,
\end{align*}
using the fact that $S^l S^k \chi_{i_1 \cdots i_{s+1}} = 0$ because the $\chi_{i_1 \cdots i_{s+1}}$ are first-order forms. Similarly
\begin{align*}
- (s+1)P^j_2 d_{i_1}
& = - \frac{s+1}{r(m-s-1)} S^j d_{i_1} + \frac{s+1}{r^2(m-s)(m-s-1)} d_l S^l S^j d_{i_1} \\
& = - \frac{s+1}{r(m-s-1)} (d_{i_1} S^j + r \d^j_{i_1}) \\*
& \qquad + \frac{s+1}{r^2(m-s)(m-s-1)} (d_l d_{i_1} S^l S^j + r d_{i_1} S^j + r \d^j_{i_1} d_l S^l ) \\
& = - \frac{s+1}{(m-s-1)} \d^j_{i_1} - \frac{(s+1)}{r(m-s)} d_{i_1} S^j + \frac{s+1}{r(m-s)(m-s-1)} \d^j_{i_1} d_k S^k
\end{align*}
and
\[
(s+1)d_{i_1} P^j_1 = \frac{s+1}{r(m-s)} d_{i_1} S^j \, ,
\]
from which we see that
\[
\d^j_{i_1} P^k_2 d_k - (s+1)P^j_2 d_{i_1} + (s+1)d_{i_1} P^j_1 = \d^j_{i_1}
\]
and the result follows.
\end{proof}


\subsection{The bottom left corner}

The part of the bicomplex which holds the major interest for the calculus of variations is in the bottom left-hand corner; we shall repeat it, with a pull-back map shown explicitly where appropriate.
\begin{center}
\begin{picture}(100,90)(0,-20)
\put(20,0){\makebox(0,0){$\overline{\Omega}^{0,m}_1$}}
\put(80,0){\makebox(0,0){$\Omega^{1,m}_1$}}
\put(140,0){\makebox(0,0){$\Omega^{1,m}_2$}}
\put(145,50){\makebox(0,0){$\Omega^{1,m-1}_1$}}
\put(35,0){\vector(1,0){30}}
\put(95,0){\vector(1,0){30}}
\put(132,40){\vector(0,-1){30}}
\put(95,10){\vector(1,1){30}}
\put(50,-5){\makebox(0,0)[t]{$\scriptstyle d$}}
\put(110,-5){\makebox(0,0)[t]{$\scriptstyle \tau_{mE}^{2,1\,*}$}}
\put(105,25){\makebox(0,0)[r]{$\scriptstyle S$}}
\put(137,25){\makebox(0,0)[l]{$\scriptstyle \dT$}}
\end{picture}
\end{center}
Take $[\Lambda]\in\overline{\Omega}^{0,m}_1$, so that, for some function $L$ on $\Fm E$, we have for any representative
\[
\Lambda = L \, dt^1 \wedge \cdots \wedge dt^m = L \, d^m t \, .
\]
Here, $L$ will play the role of a (first order) Lagrangian function in the calculus of variations, and the vector-valued function $\Lambda$ will have the capability of being integrated along $m$-curves in $\Fm E$ (and, in particular, along prolongations to $\Fm E$ of $m$-curves in $E$). So, given the equivalence class $[\Lambda]$, define
\[
\Theta_1 = Sd\Lambda \, , \qquad \ce_0 = \tau_{mE}^{2,1\,*} d\Lambda - \dT \Theta_1 \, ,
\]
where the choice of representative in the equivalence class is immaterial as we consider only $d\Lambda$ in the definition. We may compute $\Theta_1$ and $\ce_0$ in coordinates; they are
\begin{align*}
\Theta_1 & = S^j \biggl( \pd{L}{u^a} du^a + \pd{L}{u^a_i} du^a_i \biggr) 
\otimes \biggl( \vf{t^j} \hook (dt^1 \wedge \cdots \wedge dt^m) \biggr) \\
& = \biggl( \pd{L}{u^a_j} du^a \biggr) \otimes \biggl( \vf{t^j} \hook (dt^1 \wedge \cdots \wedge dt^m) \biggr)
\end{align*}
and
\begin{align*}
\ce_0 & = \biggl( \pd{L}{u^a} du^a + \pd{L}{u^a_i} du^a_i \biggr) \otimes (dt^1 \wedge \cdots \wedge dt^m) \\*
& \qquad - d_k \biggl( \pd{L}{u^a_j} du^a \biggr) \otimes dt^k \wedge \biggl( \vf{t^j} \hook (dt^1 \wedge \cdots \wedge dt^m) \biggr) \\
& = \biggl( \pd{L}{u^a} - d_k \biggl(\pd{L}{u^a_k} \biggr) \biggr) du^a \otimes (dt^1 \wedge \cdots \wedge dt^m)  \, .
\end{align*}


\section{Variational problems}

Our main application of the theory of vector forms, and their associated cohomology, will be to problems in the calculus of variations. These will be \emph{parametric problems}:\ that is, problems where the solutions are submanifolds without a given parametrization (although with a particular orientation). In the one-dimensional case, as exemplified by Finsler geometry, all the vector forms are essentially scalar forms, and so this theory only provides further insight in the case where the submanifolds have dimension two or more.


\subsection{Homogeneous variational problems}

We now study $m$-dimensional variational problems on $E$, with fixed boundary conditions. As before, a vector function $\L = L \, d^m t \in \Omega_1^{0,m}$ will be called a \emph{Lagrangian} for a variational problem. It will be called \emph{homogeneous} if it is equivariant with respect to the action of the oriented jet group $L^{1+}_m$. Thus $\L$ is homogeneous when the scalar function $L$ satisfies the infinitesimal condition
\[
d^i_j L = \delta^i_j L
\]
or, equivalently, the finite condition
\[
L \circ \alpha_{j^1_0\phi} = (\det D_j \phi^i(0)) L
\]
for every every $j^1_0\phi \in L^{1+}_m$.

We now consider submanifolds of $E$ of the form $\g(C)$ where $\g : \R^m \to E$ is an immersion and $C \subset \R^m$ is a connected compact $m$-dimensional submanifold with boundary $\p C$. The \emph{fixed-boundary variational problem defined by $\Lambda$} is the search for extremal submanifolds $\g(C) \subset E$ satisfying
\[
\int_C ((\jbar^1\g)^* \Lie_{X^1_m} L) d^m t = 0  
\]
for every variation field $X$ on $E$ satisfying $\eval{X}{\g(\p C)} = 0$.
\begin{theorem}
If $\Lambda$ is homogeneous and $\g(C)$ is an extremal submanifold then $\g \circ \phi$ is also an extremal submanifold, for any orientation-preserving reparametrization $\phi$ whose image contains $C$.
\end{theorem}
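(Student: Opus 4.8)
The plan is to prove the stronger fact that the action functional is \emph{invariant} under orientation-preserving reparametrization, and then to differentiate this invariance to transfer the first-variation condition from $\g$ to $\g\circ\phi$. Write $C' = \phi^{-1}(C)$, so that $\phi$ restricts to an orientation-preserving diffeomorphism $C'\to C$ carrying $\p C'$ to $\p C$, with $(\g\circ\phi)(C') = \g(C)$. The decisive structural point is that reparametrization acts by \emph{right} composition, $\g\mapsto\g\circ\phi$, whereas a variation acts by \emph{left} composition with the flow $\psi_s$ of the field $X$; these commute, $(\psi_s\circ\g)\circ\phi = \psi_s\circ(\g\circ\phi)$, and invariance of the action along each member of the varied family will produce the required identity of first variations.

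First I would establish the pointwise relation between the prolongations. Factoring $\phi\circ\tr_t = \tr_{\phi(t)}\circ\phi_t$, where $\phi_t(s) = \phi(t+s)-\phi(t)$ fixes the origin and satisfies $D_j\phi_t^i(0) = D_j\phi^i(t)$, we have $j^1_0\phi_t\in L^{1+}_m$ with positive Jacobian $\J\phi(t)$, and a direct computation gives
\[
\jbar^1(\g\circ\phi)(t) = j^1_0(\g\circ\phi\circ\tr_t) = \alpha_{j^1_0\phi_t}\bigl(\jbar^1\g(\phi(t))\bigr).
\]
Applying the homogeneity condition $L\circ\alpha_{j^1_0\phi_t} = \J\phi(t)\,L$ then yields
\[
\bigl(L\circ\jbar^1(\g\circ\phi)\bigr)(t) = \J\phi(t)\,\bigl(L\circ\jbar^1\g\bigr)(\phi(t)),
\]
and the change of variables $\tilde t=\phi(t)$, with $d^m\tilde t = \J\phi(t)\,d^m t$ for orientation-preserving $\phi$, shows that
\[
\int_{C'} \bigl(L\circ\jbar^1(\eta\circ\phi)\bigr)\,d^m t = \int_C \bigl(L\circ\jbar^1\eta\bigr)\,d^m t
\]
for \emph{every} $m$-curve $\eta$ immersive near $C$.

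Next I would insert the varied curve. From the Proposition that $T_m\psi_s$ is the flow of $X^1_m$, together with $\jbar^1(\psi_s\circ\eta) = T_m\psi_s\circ\jbar^1\eta$, differentiating $L\circ\jbar^1(\psi_s\circ\eta)$ at $s=0$ reproduces $(\jbar^1\eta)^*\Lie_{X^1_m}L$; hence the first variation of $\eta$ is exactly $\frac{d}{ds}\big|_{s=0}\int (L\circ\jbar^1(\psi_s\circ\eta))\,d^m t$. Applying the displayed invariance to each curve $\eta=\psi_s\circ\g$ and using $(\psi_s\circ\g)\circ\phi = \psi_s\circ(\g\circ\phi)$ gives
\[
\int_{C'}\bigl(L\circ\jbar^1(\psi_s\circ(\g\circ\phi))\bigr)\,d^m t = \int_C\bigl(L\circ\jbar^1(\psi_s\circ\g)\bigr)\,d^m t
\]
for all small $s$; differentiating at $s=0$ equates the first variation of $\g\circ\phi$ over $C'$ with that of $\g$ over $C$, for the same field $X$. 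Finally, a field $X$ is admissible for $\g\circ\phi$ on $C'$ precisely when it vanishes on $(\g\circ\phi)(\p C')=\g(\p C)$, which is exactly the admissibility condition for $\g$ on $C$; so when $\g$ is extremal the right-hand integral vanishes for every such $X$, whence the left-hand integral vanishes and $\g\circ\phi$ is extremal.

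I expect the main obstacle to be the careful justification of the prolongation identity and the interchange of the $s$-derivative with the $t$-integral. One must verify the factorization $\phi\circ\tr_t = \tr_{\phi(t)}\circ\phi_t$ so that the jet-group element $j^1_0\phi_t$ genuinely realizes the pointwise reparametrization of $\jbar^1\g$, and check that the family $\psi_s\circ\g\circ\phi$ stays immersive uniformly for small $s$, so that the prolongations remain in $\Fm E$ (where $X^1_m$ restricts) and the derivative may pass under the integral. Matching the domains $C$, $C'$ and the boundary sets under $\phi$ is routine but should be stated explicitly.
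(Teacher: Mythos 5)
Your proposal is correct and follows essentially the same route as the paper: both establish invariance of the action integral by factoring $\phi\circ\tr_t=\tr_{\phi(t)}\circ\phi_t$ so that the pointwise reparametrization is realized by the jet-group element $j^1_0\phi_t$ (the paper's $\varphi=\tr_{-s}\circ\phi\circ\tr_{\phi^{-1}(s)}$ is exactly your $\phi_t$ at $t=\phi^{-1}(s)$), applying homogeneity, and changing variables. The only difference is that you explicitly carry out the final step of transferring the first-variation condition via the commutation of the flow $\psi_s$ with right composition by $\phi$ and the matching of admissible fields, a step the paper asserts without detail.
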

\begin{proof}
We shall show that if $\Lambda$ is homogeneous then, for any immersion $\g$,
\[
\int_{\phi^{-1}(C)} \bigl( L \circ \jbar^1(\g\circ\phi) \bigr) d^m t = \int_C \bigl( L \circ \jbar^1\g \bigr) d^m t
\]
so that the integral itself is invariant under reparametrization; hence extremals will be invariant under reparametrization. As
\begin{align*}
\int_{\phi^{-1}(C)} \bigl( L \circ \jbar^1(\g\circ\phi) \bigr) d^m t 
& = \int_C (\phi^{-1})^* \bigl( \bigl( L \circ \jbar^1(\g\circ\phi) \bigr) d^m t \bigr) \\
& = \int_C \bigl( L \circ \jbar^1(\g\circ\phi) \circ \phi^{-1} \bigr) \, (\phi^{-1})^* d^m t \, ,
\end{align*}
it will be sufficient to show that
\[
\bigl( L \circ \jbar^1(\g\circ\phi) \circ \phi^{-1} \bigr) \, (\phi^{-1})^* d^m t = \bigl( L \circ \jbar^1\g \bigr) d^m t \, .
\]
Now for any $s\in\R^m$
\[
\eval{d^m t}{s} = (\J\phi \circ \phi^{-1})(s) \eval{(\phi^{-1})^* d^m t}{s} \, ,
\]
and so it will be sufficient to show that, for each $s$,
\[
\bigl( L \circ \jbar^1(\g\circ\phi) \circ \phi^{-1} \bigr)(s) = \bigl( L \circ \jbar^1\g \bigr)(s) (\J\phi \circ \phi^{-1})(s) \, .
\]
Note that we do not require the diffeomorphism $\phi$ to satisfy the condition $\phi(0) = 0$.

To see how this can be obtained from the homogeneity condition, write the latter as
\[
L \circ \alpha_{j^1_0\varphi} = (\J\varphi)(0) L
\]
where $\varphi$ is a diffeomorphism which \emph{does} satisfy $\varphi(0) = 0$; then, for any immersion $\g : \R^m \to E$,
\begin{align*}
(\J\varphi)(0) L \bigl( j^1_0(\g\circ\tr_s) \bigr) & = L \bigl( \alpha_{j^1_0\varphi} \bigl(  j^1_0(\g\circ\tr_s) \bigr) \bigr) \\
& = L \bigl( j^1_0(\g\circ\tr_s \circ\varphi) \bigr) \, .
\end{align*}
Now put $\varphi = \tr_{-s} \circ \phi \circ \tr_{\phi^{-1}(s)}$, and note that $\varphi(0) = 0$; also
\[
(\g \circ \tr_s) \circ \varphi = \g \circ \phi \circ \tr_{\phi^{-1}(s)} 
\]
and
\[
(\J\varphi)(0) = (\J\phi)(\phi^{-1}(s)) \, ,
\]
so that
\[
(\J\phi)(\phi^{-1}(s)) L \bigl( j^1_0(\g\circ\tr_s) \bigr)
= L \bigl( j^1_0(\g \circ \phi \circ \tr_{\phi^{-1}(s)}) \bigr)
\]
and hence
\[
(\J\phi)(\phi^{-1}(s)) L \bigl( \jbar^1\g(s) \bigr)
= L \bigl( \jbar^1(\g \circ \phi) \circ \phi^{-1}(s) \bigr) \, .
\]
\end{proof}


\subsection{Equivalents of Lagrangians}

Let $\L \in \Omega^{0,m}_1$ be a homogeneous Lagrangian. Any scalar $m$-form $\Theta_m \in \Omega^{m,0}_1$ which is horizontal over $E$ will be called an \emph{integral equivalent} of $\L$ if
\[
\L = \bigg( \frac{(-1)^{m(m-1)/2}}{m!} \bigg) \iT^m \Theta_m \, ;
\]
any vector $r$-form $\Theta_r \in \Omega^{r,m-r}_1$ which is horizontal over $E$ will be called an \emph{intermediate equivalent} if
\[
\L = \frac{(-1)^{r(r-1)/2}(m-r)!}{m!} \, \iT^r \Theta_r \qquad 0 \le r \le m-1 \, .
\]
\begin{lem}
\label{L24}
If $\Theta_{r+1}$ is an equivalent of $\L$ then
\[
\Theta_r = \frac{(-1)^r}{m-r} \, \iT \Theta_{r+1}
\]
is also an equivalent. 
\end{lem}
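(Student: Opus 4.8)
The plan is to verify the defining relation for $\Theta_r$ directly, substituting the proposed formula into $\iT^r\Theta_r$ and reducing to the relation already known for $\Theta_{r+1}$. Write $c_\rho = \frac{(-1)^{\rho(\rho-1)/2}(m-\rho)!}{m!}$ for the scalar coefficient attached to an equivalent of form-degree $\rho$; note that at $\rho = m$ this reduces to $\frac{(-1)^{m(m-1)/2}}{m!}$, so that the integral equivalent is simply the case $\rho = m$ of the uniform formula, and the hypothesis reads $\L = c_{r+1}\,\iT^{r+1}\Theta_{r+1}$. Since $\iT$ is linear, substituting $\Theta_r = \frac{(-1)^r}{m-r}\iT\Theta_{r+1}$ gives at once
\[
\iT^r\Theta_r = \frac{(-1)^r}{m-r}\,\iT^{r+1}\Theta_{r+1} \, ,
\]
so the required identity $\L = c_r\,\iT^r\Theta_r$ will follow provided $c_r\cdot\frac{(-1)^r}{m-r} = c_{r+1}$.

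First I would carry out the coefficient arithmetic. Computing,
\[
c_r\cdot\frac{(-1)^r}{m-r} = \frac{(-1)^{r(r-1)/2}(m-r)!}{m!}\cdot\frac{(-1)^r}{m-r} = \frac{(-1)^{r(r-1)/2+r}(m-r-1)!}{m!} \, ,
\]
and since $r(r-1)/2 + r = r(r+1)/2 = (r+1)r/2$ and $(m-r-1)! = (m-(r+1))!$, the right-hand side is exactly $c_{r+1}$. Hence $c_r\,\iT^r\Theta_r = c_{r+1}\,\iT^{r+1}\Theta_{r+1} = \L$, which is the defining relation for an equivalent of form-degree $r$.

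It then remains to confirm that $\Theta_r$ actually lies in $\Omega^{r,m-r}_1$ and is horizontal over $E$. The operator $\iT$ formally sends $\Omega^{r+1,m-r-1}_1$ into $\Omega^{r,m-r}_2$, but because $\Theta_{r+1}$ is horizontal over $E$ its form part carries no $du^a_i$ components, so the contraction by $d_i = u^a_i\,\p/\p u^a$ uses only the $u^a_i$ factor and produces no second-order terms. Thus $\iT\Theta_{r+1}$ stays horizontal over $E$ and, to within the pullback convention already in force in the bicomplex, remains of first order; the form degree drops from $r+1$ to $r$ and the vector degree rises from $m-r-1$ to $m-r$, placing $\Theta_r$ in $\Omega^{r,m-r}_1$ as needed. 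Since $r+1$ ranges over $1,\ldots,m$, the index $r$ satisfies $0 \le r \le m-1$, so $\Theta_r$ is a bona fide intermediate equivalent.

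I expect the only real obstacle to be the sign-and-factorial bookkeeping in the coefficient identity; once the exponent $r(r-1)/2 + r$ is recognised as $(r+1)r/2$ the argument collapses to a one-line substitution, and the horizontality claim is routine given the explicit first-order total derivatives.
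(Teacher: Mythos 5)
Your proof is correct and follows essentially the same route as the paper: substitute the proposed $\Theta_r$ into the defining relation and check that the sign--factorial coefficients match via $r(r-1)/2 + r = r(r+1)/2$. Your additional verification that $\iT\Theta_{r+1}$ remains first-order and horizontal over $E$ (so that $\Theta_r$ genuinely lies in $\Omega^{r,m-r}_1$) is a point the paper leaves implicit, and it is a worthwhile observation.
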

\begin{proof}
If $\Theta_{r+1}$ is an equivalent of $\L$ then by definition
\[
\L = \frac{(-1)^{r(r+1)/2}(m-r-1)!}{m!} \, \iT^{r+1} \Theta_{r+1} \, ,
\]
so that
\begin{align*}
\frac{(-1)^{r(r-1)/2}(m-r)!}{m!} \, \iT^r \Theta_r
& = \frac{(-1)^{r(r-1)/2}(m-r)!}{m!} \, \iT^r \biggl( \frac{(-1)^r}{m-r} \, \iT \Theta_{r+1} \biggr) \\
& = \L \, .
\end{align*}
\end{proof}
In the case $r=m$ we use the term `integral equivalent' for the following reason.
\begin{lem}
\label{L25}
If $\g$ is an $m$-curve in $E$ then $(\jbar^1\g)^* \L = (\jbar^1\g)^* \Theta_m$, so that
\[
\int_C (\jbar^1\g)^* \L = \int_C (\jbar^1\g)^* \Theta_m \, .
\]
It follows that $\L = \Theta_0$ and $\Theta_m$ have the same extremals.
\end{lem}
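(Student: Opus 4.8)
The plan is to establish the pointwise identity $(\jbar^1\g)^* \L = (\jbar^1\g)^* \Theta_m$ of $m$-forms on $O$; the equality of integrals is then immediate, and the claim about extremals follows at once because the two action functionals become the same function of $\g$. Since $\L = L\,d^m t$ has $r=0$ and $s=m$, its pullback is simply
\[
(\jbar^1\g)^* \L = (L \circ \jbar^1\g)\, d^m t \, ,
\]
so the whole problem reduces to showing that $(\jbar^1\g)^* \Theta_m$ equals the same $m$-form.

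Since $(\jbar^1\g)^*\Theta_m$ is an $m$-form on the $m$-dimensional domain $O$, it suffices to evaluate it on the standard frame $(\vf{t^1}, \ldots, \vf{t^m})$, and here I would use two facts. First, $\tau_{mE}\circ\jbar^1\g = \g$, so the $E$-projection of a prolonged frame vector is $T\tau_{mE}\bigl(T\jbar^1\g(\vf{t^k})\bigr) = T\g(\vf{t^k})$. Second, taking $\g\circ\tr_t$ as the representative $m$-curve of the velocity $\jbar^1\g(t)$, the formula for the total derivative along $\tau_{mE}$ gives $\eval{d_k}{\jbar^1\g(t)} = T\g(\vf{t^k})$ as well. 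Because $\Theta_m$ is horizontal over $E$, its value on the prolonged frame depends only on these $E$-projections, and therefore
\[
\bigl\langle (\jbar^1\g)^* \Theta_m, \vf{t^1} \wedge \cdots \wedge \vf{t^m} \bigr\rangle = \bigl( d_m \hook \cdots \hook d_1 \hook \Theta_m \bigr) \circ \jbar^1\g \, ,
\]
the complete contraction of $\Theta_m$ with the total derivatives, pulled back along $\jbar^1\g$.

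It remains to identify this complete contraction with $L$, which I would read off from the definition of an integral equivalent. Iterating $\iT(\chi \otimes \omega) = (d_i\hook\chi)\otimes dt^i\wedge\omega$ gives
\[
\iT^m \Theta_m = \bigl( d_{i_m} \hook \cdots \hook d_{i_1} \hook \Theta_m \bigr) \otimes dt^{i_m} \wedge \cdots \wedge dt^{i_1} \, ,
\]
and the antisymmetry of $\Theta_m$ leaves only the terms indexed by permutations: in each of these the sign from reordering the contractions cancels the sign from reordering the wedge factors, leaving the single reversal sign $(-1)^{m(m-1)/2}$, so that $\iT^m \Theta_m = m!\,(-1)^{m(m-1)/2}\,(d_m \hook \cdots \hook d_1 \hook \Theta_m)\, d^m t$. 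Substituting this into $\L = \tfrac{(-1)^{m(m-1)/2}}{m!}\iT^m\Theta_m$, the two sign factors cancel and the $m!$ divides out, leaving $L = d_m \hook \cdots \hook d_1 \hook \Theta_m$. Together with the previous display this yields $(\jbar^1\g)^*\Theta_m = (L\circ\jbar^1\g)\,d^m t = (\jbar^1\g)^*\L$, as required. The statement about extremals is then formal: since $\Theta_0 = \L$ (the $r=0$ case of the definition) and the two integrals agree for every $\g$, their first variations along any variation field $X$ coincide, so the extremal $m$-curves are the same.

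I expect the genuine difficulty to lie in the horizontality argument of the second paragraph rather than in the sign bookkeeping. One must keep straight that $\Theta_m$ is an honest form on $\Fm E$, whereas the total derivatives $d_k$ are vector fields \emph{along} $\tau_{mE}$, so the complete contraction lives in the module dual to vector fields along $\tau_{mE}$; the claim that this pairing coincides with the ordinary frame-evaluation of the pullback is exactly what horizontality over $E$ licenses, and it is also where the fact that $\iT^m\Theta_m = \L$ holds only \emph{to within a pullback} is harmlessly absorbed, since along a prolongation $\jbar^1\g$ the offending pullback has no effect.
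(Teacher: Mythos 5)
Your proposal is correct, and it proves the same identity as the paper, but by a noticeably different route. The paper's proof is a direct coordinate computation: it writes $\Theta_m = \Theta_{a_1 \cdots a_m}\, du^{a_1} \wedge \cdots \wedge du^{a_m}$ with skew-symmetric coefficients, iterates the explicit formula for $\iT$ to obtain $\iT^m \Theta_m = m!\,(-1)^{m(m-1)/2}\, \Theta_{a_1 \cdots a_m} \det\bigl(u^{a_i}_{k_j}\bigr)\, d^m t$, and then computes $(\jbar^1\g)^* \Theta_m$ separately, observing that both expressions reduce to the same determinant $\det\bigl(\p\g^{a_i}/\p t^{k_j}\bigr)$. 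You instead route both sides through the single invariant quantity $\Theta_m(d_1, \ldots, d_m)$: horizontality over $E$ lets you replace the pushed-forward frame vectors $T\jbar^1\g(\p/\p t^k)$ by the total derivatives $d_k$ (since both project to $T\g(\p/\p t^k)$), and a permutation-sign argument identifies $\frac{(-1)^{m(m-1)/2}}{m!}\iT^m\Theta_m$ with this complete contraction. The two arguments encode the same combinatorics — your sign cancellation is exactly the paper's determinant expansion — but yours makes visible \emph{why} the normalising factor $(-1)^{m(m-1)/2}/m!$ in the definition of an integral equivalent is the right one, and it isolates horizontality as the single hypothesis doing the work, whereas the paper's version has the advantage of simultaneously establishing the explicit formula for $\iT^r$ of intermediate equivalents, which it reuses later (e.g.\ for the fundamental equivalent). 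Your closing remark about ``to within a pullback'' is a harmless aside: since $\Theta_m$ is horizontal over $E$, contraction with the $d_i$ never raises the order, so no pullback ambiguity actually arises in this lemma.
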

\begin{proof}
Suppose $\Theta \in \Omega^{r,m-r}$ may be written in coordinates in the particular form
\[
\Theta = \Theta_{a_1 \cdots a_m} u^{a_{r+1}}_{k_{r+1}} \cdots u^{a_m}_{k^m} du^{a_1} \wedge \cdots \wedge du^{a_r} 
\otimes dt^{k_{r+1}} \wedge \cdots \wedge dt^{k_m}
\]
where the functions $\Theta_{a_1 \cdots a_m}$ are skew-symmetric in their indices; then
\begin{align*}
\iT\Theta & = \Theta_{a_1 \cdots a_m} u^{a_{r+1}}_{k_{r+1}} \cdots u^{a_m}_{k^m}
\biggl( u^b_{k_r} \vf{u^b} \hook du^{a_1} \wedge \cdots \wedge du^{a_r} \biggr) \otimes \\*
& \qquad\qquad\qquad \otimes dt^{k_r} \wedge dt^{k_{r+1}} \wedge \cdots \wedge dt^{k_m} \\
& = \sum_{p=1}^r (-1)^{p-1}\Theta_{a_1 \cdots a_m} u^{a_{r+1}}_{k_{r+1}} \cdots u^{a_m}_{k^m}
\biggl( u^{a_p}_{k_r} du^{a_1} \wedge \cdots \widehat{du^{a_p}} \cdots \wedge du^{a_r} \biggr) \otimes \\*
& \qquad\qquad\qquad \otimes dt^{k_r} \wedge dt^{k_{r+1}} \wedge \cdots \wedge dt^{k_m} \\
& = r (-1)^{r-1}\Theta_{a_1 \cdots a_m} u^{a_r}_{k_r} u^{a_{r+1}}_{k_{r+1}} \cdots u^{a_m}_{k^m}
du^{a_1} \wedge \cdots \wedge du^{a_{r-1}} \otimes \\*
& \qquad\qquad\qquad \otimes dt^{k_r} \wedge dt^{k_{r+1}} \wedge \cdots \wedge dt^{k_m} \, .
\end{align*}
Thus if $\Theta \in \Omega^{m,0}$ we see that
\begin{align*}
\iT^m \Theta & = m! (-1)^{m(m-1)/2} \Theta_{a_1 \cdots a_m} u^{a_1}_{k_1} \cdots u^{a_m}_{k^m} dt^{k_1} \wedge \cdots \wedge dt^{k_m} \\
& = m! (-1)^{m(m-1)/2} \Theta_{a_1 \cdots a_m} \det \bigl( u^{a_i}_{k_j}\bigr) dt^1 \wedge \cdots \wedge dt^m
\end{align*}
so that
\[
(\jbar^1\g)^* \iT^m \Theta = m! (-1)^{m(m-1)/2} \bigl( \Theta_{a_1 \cdots a_m} \circ \jbar^1\g \bigr) 
\det \biggl( \pd{\g^{a_i}}{t^{k_j}} \biggr) dt^1 \wedge \cdots \wedge dt^m \, .
\]
On the other hand,
\begin{align*}
(\jbar^1\g)^* \Theta & = \bigl( \Theta_{a_1 \cdots a_m} \circ \jbar^1\g \bigr)
(\jbar^1\g)^* \bigl( du^{a_1} \wedge \cdots \wedge du^{a_m} \bigr) \\
& = \bigl( \Theta_{a_1 \cdots a_m} \circ \jbar^1\g \bigr) \det \biggl( \pd{\g^{a_i}}{t^{k_j}} \biggr) dt^1 \wedge \cdots \wedge dt^m \, .
\end{align*}
\end{proof}


\subsection{Euler forms}

Let $\Theta_m$ be an integral equivalent of $\L$. Define the scalar $(m+1)$-form $\ce_m \in \Omega^{m+1,0}_1$ by
\[
\ce_m = d\Theta_m
\]
and the vector forms $\ce_r \in \Omega^{r+1,m-r}_2$ by
\[
\ce_r = \tau_{mE}^{2,1\,*} d\Theta_r - (-1)^r \dT\Theta_{r+1} \qquad 0 \le r \le m-1 \, .
\]
The forms $\ce_r$ are called the \emph{Euler forms} of $\Theta_m$.
\begin{lem}
\label{L26}
The Euler forms satisfy the recurrence relation
\[
\ce_r = \frac{(-1)^{r+1}}{m-r} \, \iT \ce_{r+1} \qquad 0 \le r \le m-1 \, ;
\]
consequently if $d\Theta_m = \ce_m = 0$ then $\ce = 0$.
\end{lem}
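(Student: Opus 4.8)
The plan is to prove the recurrence relation
\[
\ce_r = \frac{(-1)^{r+1}}{m-r} \, \iT \ce_{r+1} \qquad 0 \le r \le m-1
\]
directly by computation, and then deduce the collapse statement by downward induction. The starting point is the definition $\ce_r = \tau_{mE}^{2,1\,*} d\Theta_r - (-1)^r \dT\Theta_{r+1}$, together with the key structural fact supplied by Lemma~\ref{L24}: each equivalent $\Theta_r$ is built from the next one via $\Theta_r = \frac{(-1)^r}{m-r}\iT\Theta_{r+1}$. So I would substitute $\Theta_r = \frac{(-1)^r}{m-r}\iT\Theta_{r+1}$ into the definition of $\ce_r$, and separately compute $\iT\ce_{r+1}$ by applying $\iT$ to the definition $\ce_{r+1} = \tau_{mE}^{2,1\,*} d\Theta_{r+1} - (-1)^{r+1}\dT\Theta_{r+2}$. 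The goal is to show these two expressions agree up to the stated scalar factor.

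The main engine will be the identity $\dT = d\,\iT + \iT\,d$ established in the subsection on operations on vector forms. First I would rewrite $\tau_{mE}^{2,1\,*} d\Theta_r$. Using $\Theta_r = \frac{(-1)^r}{m-r}\iT\Theta_{r+1}$ and commuting $d$ with the pullback and with $\iT$ (noting $d\iT\Theta_{r+1} = \dT\Theta_{r+1} - \iT d\Theta_{r+1}$), this term should split into a piece involving $\iT d\Theta_{r+1}$ and a piece involving $\dT\Theta_{r+1}$. Meanwhile $\iT\ce_{r+1} = \iT\tau_{mE}^{2,1\,*} d\Theta_{r+1} - (-1)^{r+1}\iT\dT\Theta_{r+2}$. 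The hope is that when I expand both sides in terms of the homotopy relation $\dT = d\iT + \iT d$, matching $\iT$-degrees and using $\iT^2$ acting on the appropriate degree of form, the $\dT\Theta_{r+2}$ term and the $\dT\Theta_{r+1}$ term land in the right places and the combinatorial factors $(m-r)$, $(m-r-1)$ arising from the $\iT$-operators on $\bw^\bullet\R^{m*}$ combine to produce exactly $(-1)^{r+1}/(m-r)$.

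The main obstacle I anticipate is bookkeeping of the sign conventions and of the numerical factors produced when $\iT$ acts. The operator $\iT$ raises the $\R^{m*}$-degree $s$ by one while contracting a total derivative, and on a form already of $\R^{m*}$-degree $m-r-1$ the effect of a further $\iT$ or of the skew-symmetrization can introduce factors depending on $m-r$; getting these to match the claimed $\frac{1}{m-r}$ requires care. I would organize this by working with decomposable forms $\chi\otimes\omega$ and tracking the action of $\iT$, $d$, $\dT$ termwise, exactly as in the proofs of Lemmas~\ref{L24} and~\ref{L25}, where the factor $(m-r)$ already appeared. Once the recurrence holds, the final assertion is immediate: if $\ce_m = d\Theta_m = 0$ then applying the recurrence repeatedly gives $\ce_{m-1} = \frac{(-1)^m}{1}\iT\ce_m = 0$, and then $\ce_{m-2} = 0$, and so on down to $\ce_0 = \ce = 0$, since each $\ce_r$ is a scalar multiple of $\iT\ce_{r+1}$.
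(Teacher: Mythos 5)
Your overall strategy coincides with the paper's: apply $\iT$ to the definition of $\ce_{r+1}$, use the homotopy identity $\dT = d\,\iT + \iT\, d$ to trade $\iT d\Theta_{r+1}$ for $\dT\Theta_{r+1} - d\,\iT\Theta_{r+1}$, and then use Lemma~\ref{L24} in the form $\iT\Theta_{r+1} = (-1)^r(m-r)\Theta_r$. That part of the plan is sound and produces the $d\Theta_r$ and $\dT\Theta_{r+1}$ contributions with the correct coefficients.

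The step you leave to ``hope'', however, is precisely where an extra ingredient is required, and your stated toolkit does not supply it. The remaining term $-(-1)^{r+1}\iT\dT\Theta_{r+2}$ cannot be handled by $\dT = d\,\iT + \iT d$ alone: expanding that way yields $\iT d\,\iT\Theta_{r+2} + \iT^2 d\Theta_{r+2}$, and $\iT^2$ is not zero on these modules (indeed $\iT^m\Theta_m$ is a nonzero multiple of $\L$), while Lemma~\ref{L24} says nothing about $\iT$ applied to $d\Theta_{r+2}$, which is not an equivalent. What you need is the anticommutation relation $\iT\dT = -\dT\iT$, which follows directly from the decomposable-form formulas:
\[
\iT\dT(\chi\otimes\omega) = (d_j \hook d_i\chi)\otimes dt^j\wedge dt^i\wedge\omega \, , \qquad
\dT\iT(\chi\otimes\omega) = d_j(d_i \hook \chi)\otimes dt^j\wedge dt^i\wedge\omega \, ,
\]
and these differ only by transposing $i$ and $j$ against the antisymmetric factor $dt^j\wedge dt^i$, since the Lie-type operator $d_j$ and the contraction $d_i\hook$ commute. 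With this in hand, $\iT\dT\Theta_{r+2} = -\dT\iT\Theta_{r+2} = (-1)^r(m-r-1)\dT\Theta_{r+1}$, and everything assembles to $\iT\ce_{r+1} = (m-r)\bigl(\dT\Theta_{r+1} - (-1)^r d\Theta_r\bigr)$, giving the stated recurrence. Two smaller remarks: the case $r = m-1$ must be treated separately (but is easier), since $\ce_m = d\Theta_m$ contains no $\dT\Theta_{m+1}$ term; and your concluding downward induction from $\ce_m = 0$ to $\ce_0 = 0$ is exactly the paper's.
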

\begin{proof}
This follows from the definition and Lemma~\ref{L24}. We have, omitting the pull-back maps,
\begin{align*}
\iT\ce_{r+1} & = \iT d\Theta_{r+1}  (-1)^{r+1} \iT\dT\Theta_{r+2} \\
& = \dT\Theta_{r+1} - d\iT\Theta_{r+1} - (-1)^r \dT\iT\Theta_{r+2} \\
& = \dT\Theta_{r+1} - (-1)^r d\Theta_r + (m-r-1) \dT\Theta_{r+1} \\
& = (m-r) \bigl( \dT\Theta_{r+1} - (-1)^r d\Theta_r \bigr)
\intertext{when $r+1<m$, so that}
\frac{(-1)^{r+1}}{m-r} \iT\ce_{r+1} & = (-1)^{r+1} \dT\Theta_{r+1} + d\Theta^r = \ce_r \, .
\end{align*}
Similarly,
\begin{align*}
\iT\ce_m & = \iT d\Theta_m \\
& = \dT\Theta_m - d\iT\Theta_m \\
& = \dT\Theta_m - (-1)^{m-1} d\Theta_{m-1}
\intertext{so that}
(-1)^m \iT\ce_m & = (-1)^m \dT\Theta_m + d\Theta_{m-1} = \ce_{m-1} \, .
\end{align*}
\end{proof}
The different spaces containing the various equivalents and Euler forms may be seen in this diagonal part of the bicomplex.
\begin{center}
\begin{picture}(300,210)(50,-30)
\put(270,150){\vector(0,-1){20}}
\put(238,110){\vector(0,-1){20}}
\put(178,70){\vector(0,-1){20}}
\put(118,30){\vector(0,-1){20}}
\put(80,0){\vector(1,0){30}}
\put(150,40){\vector(1,0){20}}
\put(210,80){\vector(1,0){20}}
\put(292,160){\vector(1,0){20}}
\put(245,158){\makebox(0,0){\emph{$\Theta_m \in$}}}
\put(275,160){\makebox(0,0){$\Omega^{m,0}_1$}}
\put(335,160){\makebox(0,0){$\Omega^{m+1,0}_1$}}
\put(367,158){\makebox(0,0){\emph{$\ni\ce_m$}}}
\put(260,120){\makebox(0,0){$\dots$}}
\put(158,79){\makebox(0,0){\emph{$\Theta_2 \in$}}}
\put(190,80){\makebox(0,0){$\Omega^{2,m-2}_1$}}
\put(98,39){\makebox(0,0){\emph{$\Theta_1 \in$}}}
\put(250,80){\makebox(0,0){$\Omega^{3,m-2}_2$}}
\put(280,79){\makebox(0,0){\emph{$\ni\ce_2$}}}
\put(40,-2){\makebox(0,0){\emph{$\L \in$}}}
\put(130,40){\makebox(0,0){$\Omega^{1,m-1}_1$}}
\put(190,40){\makebox(0,0){$\Omega^{2,m-1}_2$}}
\put(220,39){\makebox(0,0){\emph{$\ni\ce_1$}}}
\put(65,0){\makebox(0,0){$\overline{\Omega}^{0,m}_1$}}
\put(125,0){\makebox(0,0){$\Omega^{1,m}_2$}}
\put(150,-1){\makebox(0,0){\emph{$\ni\ce_0$}}}
\put(95,5){\makebox(0,0)[b]{$\scriptstyle \dt$}}
\put(160,45){\makebox(0,0)[b]{$\scriptstyle \dt$}}
\put(220,85){\makebox(0,0)[b]{$\scriptstyle \dt$}}
\put(125,20){\makebox(0,0)[l]{$\scriptstyle \dT$}}
\put(185,60){\makebox(0,0)[l]{$\scriptstyle \dT$}}
\put(245,100){\makebox(0,0)[l]{$\scriptstyle \dT$}}
\put(277,140){\makebox(0,0)[l]{$\scriptstyle \dT$}}
\put(302,165){\makebox(0,0)[b]{$\scriptstyle d$}}
\put(110,30){\vector(-3,-2){30}}
\put(170,70){\vector(-3,-2){30}}
\put(230,110){\vector(-3,-2){30}}
\put(170,30){\vector(-3,-2){30}}
\put(230,70){\vector(-3,-2){30}}
\put(290,110){\vector(-3,-2){30}}
\put(312,150){\vector(-3,-2){30}}
\put(252,150){\vector(-3,-2){15}}
\put(110,120){\makebox(0,0){\emph{$\displaystyle\Theta_r = \frac{(-1)^r}{m-r} \, \iT \Theta_{r+1}, \quad
\Theta_0 = \L$}}}
\put(300,20){\makebox(0,0){\emph{$\ce_r = \dt\Theta_r - (-1)^r \dT\Theta_{r+1}$}}}
\put(270,4){\makebox(0,0){\emph{$\dt = \tau_{mE}^{2,1\,*} \circ d$}}}
\end{picture}
\end{center}


\subsection{Lepagian forms}

Let $\L$ be a homogeneous Lagrangian, and let $\Theta_r$ be an equivalent of $\L$ ($1 \le r \le m$). We shall say that $\Theta_r$ is \emph{Lepagian} if the corresponding Euler form $\ce_0 \in \Omega^{1,m}_2$ satisfies
\[
S\ce_0 = 0 \, ,
\]
so that $\ce_0$ is horizontal over $E$.
\begin{theorem}
The vector 1-form
\[
\Theta_1 = Sd\L
\]
is an integral equivalent of $\L$ ($m=1$) or an intermediate equivalent ($m \ge 2$), and is Lepagian. It is called the \emph{Hilbert equivalent} of $\L = L \, d^m t$.
\end{theorem}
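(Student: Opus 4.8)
The plan is to extract everything possible from the coordinate expressions for $\Theta_1 = Sd\L$ and for $\ce_0 = \tau_{mE}^{2,1\,*}d\L - \dT\Theta_1$ that were already produced in the discussion of the bottom-left corner of the bicomplex, and then to supply the two facts not checked there: that the single contraction $\iT\Theta_1$ recovers $\L$ with exactly the constant demanded by the definition of an equivalent, and that $\ce_0$ is horizontal over $E$. The first of these is where homogeneity is used; the second, as I will note, is structural.

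First I would record that
\[
\Theta_1 = \pd{L}{u^a_j}\,du^a \otimes \Bigl( \vf{t^j} \hook d^m t \Bigr)
\]
involves only the horizontal generators $du^a$, so $\Theta_1$ is horizontal over $E$ and is a candidate equivalent. The substantive step is the contraction. Applying $\iT$, using $d_i \hook du^a = u^a_i$ together with the top-form identity $dt^i \wedge \bigl( \vf{t^j} \hook d^m t \bigr) = \d^i_j\,d^m t$, gives
\[
\iT\Theta_1 = \pd{L}{u^a_j}\,u^a_i\,\d^i_j\,d^m t = u^a_i\,\pd{L}{u^a_i}\,d^m t .
\]
Homogeneity now enters through Euler's relation: contracting the infinitesimal condition $d^i_j L = \d^i_j L$ on the diagonal $i=j$ yields $u^a_i\,\p L/\p u^a_i = mL$, whence $\iT\Theta_1 = mL\,d^m t = m\L$, that is $\L = \tfrac{1}{m}\iT\Theta_1$. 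Matching this against the definitions shows that $\Theta_1$ is an equivalent: for $m\ge 2$ the coefficient $\tfrac{1}{m} = (m-1)!/m!$ is precisely the one required of an intermediate equivalent with $r=1$, while for $m=1$ the same relation $\L = \iT\Theta_1$ is the defining condition for an integral equivalent.

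For the Lepagian property I would use the expression
\[
\ce_0 = \Bigl( \pd{L}{u^a} - d_k\Bigl( \pd{L}{u^a_k} \Bigr) \Bigr) du^a \otimes d^m t
\]
already computed. The point worth stressing is that its horizontality is automatic for the particular choice $\Theta_1 = Sd\L$, and does not rely on homogeneity: the $du^a_i$ contribution $\pd{L}{u^a_i}du^a_i$ coming from $\tau_{mE}^{2,1\,*}d\L$ is cancelled exactly by the term $\pd{L}{u^a_j}du^a_j$ arising in $\dT\Theta_1$ after the $\d^k_j$ collapse, by a simple relabelling of the counting index. Since $\ce_0$ then contains only $du^a$, and the vertical endomorphism $S^k$ annihilates horizontal generators $du^a$, we get $S\ce_0 = 0$, which is exactly the definition of $\Theta_1$ being Lepagian.

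The manipulations of forms here are essentially routine, having been prepared in the bottom-left-corner computation; I expect the only genuinely non-formal input to be the passage from the equivariance (homogeneity) condition to Euler's identity $u^a_i\,\p L/\p u^a_i = mL$, and this — rather than any exterior-algebra bookkeeping — is the crux, since it is what manufactures the precise constant $\tfrac{1}{m}$ that identifies $\Theta_1$ as an equivalent. The Lepagian half, by contrast, is a structural consequence of taking $\Theta_1 = Sd\L$ and requires no extra hypothesis beyond the coordinate form of $\ce_0$.
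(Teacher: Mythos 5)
Your proposal is correct, and the first half is essentially the paper's own argument: the paper also reduces the claim to $\iT\Theta_1 = i_jS^j\,dL\otimes d^mt = u^a_j\,\p L/\p u^a_j\otimes d^mt$ and then invokes the infinitesimal homogeneity condition $d^j_jL = mL$, exactly your Euler identity; you have correctly identified this as the only place homogeneity enters, and your matching of the constant $1/m=(m-1)!/m!$ against both the intermediate ($m\ge 2$, $r=1$) and integral ($m=1$) definitions is the right bookkeeping.

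For the Lepagian half you diverge mildly from the paper. The paper argues intrinsically: it computes $S\dT\Theta_1 = S\dT Sd\L$ and uses the commutator identity $S^jd_k - d_kS^j = r\,\d^j_k$ of Lemma~\ref{L21}, together with $S^iS^jdL=0$ (because $L$ is first order), to conclude $S\dT\Theta_1 = Sd\L$ and hence $S\ce_0=0$. You instead appeal to the explicit coordinate formula
\[
\ce_0 = \biggl( \pd{L}{u^a} - d_k\biggl(\pd{L}{u^a_k}\biggr) \biggr) du^a \otimes d^m t
\]
already derived in the bottom-left-corner discussion, and observe that a form built solely from the $du^a$ is annihilated by $S^k$. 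This is legitimate: the cancellation of the $\p L/\p u^a_i\,du^a_i$ terms in that computation is precisely the coordinate shadow of the commutator identity, so the two arguments are the same mathematics in different dress. What the paper's route buys is independence from the particular coordinate expression (the same pattern is reused later for the fundamental equivalent); what your route buys is economy, since the needed computation was already on the page. Your observation that the Lepagian property needs no homogeneity, while the identification as an equivalent does, is accurate and is implicit but not stated in the paper.
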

\begin{proof}
From the definition of $S$,
\[
S\Xi = S^j \chi \otimes d^{m-1} t_j \, ,
\]
so that
\begin{align*}
\iT S d\L & = \iT S(dL \otimes d^m t) \\
& = \iT(S^j dL \otimes d^{m-1} t_j) \\
& = i_k S^j dL \otimes dt^k \wedge d^{m-1} t_j \\
& = i_j S^j dL \otimes d^m t \, .
\end{align*}
But for any $1$-form $\theta$ on $\Fm E$, if in coordinates $\theta = \theta_a du^a + \theta^i_a du^a_i$ then
\[
i_j S^j \theta = i_j (\theta^j_a du^a) = u^a_j \theta^j_a \, ,
\]
so that
\[
i_j S^j dL = u^a_j \pd{L}{u^a_j} = d^j_j L = mL
\]
using the homogeneity of the Lagrangian.

To show that $\Theta_1$ is Lepagian, note that
\begin{align*}
S \dT \Theta_1 & = S \dT S d\L \\
& = S \dT (S^j dL \otimes d^{m-1} t_j) \\ 
& = S \big( d_i S^j dL \otimes (dt^i \wedge d^{m-1} t_j) \big) \\
& = S \big( d_j S^j dL \otimes d^m t \big) \\
& = S^i d_j S^j dL \otimes d^{m-1} t_i \\
& = (d_j S^i + \d^i_j) S^j dL \otimes d^{m-1} t_i  \\
& = S^i dL \otimes d^{m-1} t_i \\
& = S(dL \otimes d^m t) = Sd\L 
\end{align*}
using Lemma~\ref{L21} and the fact that $L$ is defined on $\Fm E$ so that $S^i S^j dL = 0$; thus $S\ce_0 = 0$, as required.
\end{proof}
\begin{theorem}
If $\Tht_1$ is another Lepagian vector 1-form equivalent to $\L$, with corresponding Euler form $\cet_0$, then
\[
\cet_0 = \ce_0 \, , \qquad \Tht_1 - \Theta_1 = \dT\Phi \qquad (\Phi \in \Omega^{1,m-2}_0) \, .
\]
\end{theorem}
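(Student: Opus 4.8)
The plan is to study the difference $\Psi = \Tht_1 - \Theta_1 \in \Omega^{1,m-1}_1$ and to prove two things about it: that it is $\dT$-closed (which forces $\cet_0 = \ce_0$) and that it is $\dT$-exact with a potential of the stated type. Two facts drop out of the hypotheses immediately. First, $\Theta_1$ and $\Tht_1$ are equivalents of the same $\L$, so applying $\tfrac1m\iT$ to either returns $\L$; hence $\iT\Psi = 0$. Second, the Euler form of a $1$-form equivalent is $\ce_0 = \tau_{mE}^{2,1\,*}d\L - \dT\Theta_1$, built from $\Theta_1$ and the common bottom term $\Theta_0 = \L$, so subtracting the two definitions gives $\cet_0 - \ce_0 = -\dT\Psi$. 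Thus the theorem reduces to the single claim that $\dT\Psi = 0$ together with $\Psi \in \Im\dT$.

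Next I would bring in the Lepagian hypotheses $S\ce_0 = 0$ and $S\cet_0 = 0$, which combine to give $S\dT\Psi = 0$. The key computation is that, being a difference of forms horizontal over $E$, $\Psi$ is itself horizontal over $E$, so $S\Psi = 0$; feeding this into Lemma~\ref{L21} applied to the scalar parts of $\Psi$ (which are $1$-forms, so $r=1$) produces the identity $S\dT\Psi = \Psi$. Comparing with $S\dT\Psi = 0$ forces $\dT\Psi = 0$ (indeed $\Psi = 0$), which already yields $\cet_0 = \ce_0$. I would present this either by the direct coordinate computation $\dT\Psi = \bigl((d_j\Psi^j_a)\,du^a + \Psi^j_a\,du^a_j\bigr)\otimes d^m t$, reading off that horizontality of $\dT\Psi$ kills every $\Psi^j_a$, or equivalently through the homotopy formula of Theorem~\ref{T22}, in which $P_1\Psi = \tfrac{1}{r(m-s)}S\Psi = 0$ while $S\dT\Psi=0$ annihilates $P_2\dT\Psi$.

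It then remains to exhibit the difference as $\dT\Phi$ with $\Phi\in\Omega^{1,m-2}_0$; since $\Psi$ is $\dT$-closed this is precisely the global exactness of the column in form-degree $r=1$, namely $\Omega^{1,m-2}_0 \xrightarrow{\dT} \Omega^{1,m-1}_1 \xrightarrow{\dT} \Omega^{1,m}_2$, guaranteed by Theorem~\ref{T22}, so that a $\dT$-closed element of $\Omega^{1,m-1}_1$ is $\dT\Phi$ for some potential in $\Omega^{1,m-2}_0$ (which here may be taken to be zero). The step I expect to be the main obstacle is this order reduction: the potential handed over directly by the operator $P_1$ of Theorem~\ref{T22} lies in $\Omega^{1,m-2}_1$ rather than in the sought space $\Omega^{1,m-2}_0$ of genuine forms on $E$, so one must invoke the full exactness of the column — where the higher-order bookkeeping of $P_1$ and $P_2$ does the work — to descend to a form on $E$. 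The two technical facts I would verify with care are the identity $S\dT\Psi=\Psi$ for horizontal $r=1$ vector forms and the coordinate action of $\dT$ used to read off $\dT$-closedness.
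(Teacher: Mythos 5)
Your reductions are sound as far as they go: $\cet_0 - \ce_0 = -\dT\Psi$ follows from the definition of the Euler forms, the two Lepagian conditions give $S\dT\Psi = 0$, and the identity $S\dT\Psi = \Psi$ for a first-order $\Psi$ horizontal over $E$ is a correct application of Lemma~\ref{L21} (it is the same mechanism the paper packages as $P_2\dT\Tht_1 = \Tht_1 - \dT P_1\Tht_1$). The problem is the premise ``$\Psi$ is horizontal over $E$''. You justify it by reading the definition of an equivalent literally, but that reading makes the theorem vacuous: a nonzero $\dT\Phi$ with $\Phi\in\Omega^{1,m-2}_0$ is \emph{never} horizontal over $E$ (in coordinates $\dT\Phi$ contains the terms $\phi_{a,\,i_1\cdots i_{m-2}}\,du^a_k$ coming from $d_k\phi$), so if every admissible $\Tht_1$ were horizontal over $E$ the conclusion would collapse to $\Tht_1 = \Theta_1$ and the $\dT\Phi$ clause would be pointless --- note the paper reserves that collapse for the case $m=1$. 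The paper's own proof assumes no such horizontality: it applies $P_2$ to $\cet_0 = d\L - \dT\Tht_1$, uses $P_2\cet_0 = 0$ (which does follow from $S\cet_0 = 0$, since every term of $P_2$ acts through $S$ first) together with $P_2 d\L = Sd\L = \Theta_1$, and the homotopy formula of Theorem~\ref{T22} then yields $\Tht_1 - \Theta_1 = \dT P_1\Tht_1$ with $P_1\Tht_1 = \tfrac12 S\Tht_1$ --- a term that survives precisely because $\Tht_1$ may fail to be horizontal over $E$. The equality $\cet_0 = \ce_0$ then comes for free from $\dT^2 = 0$, not from $\Psi = 0$. So your argument establishes only the degenerate case $S\Tht_1 = 0$, where there is nothing to prove.

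Two smaller points. Your observation that the potential handed over by the homotopy operator lives a priori in $\Omega^{1,m-2}_1$ rather than $\Omega^{1,m-2}_0$ is a fair criticism of the statement --- the paper exhibits $\Phi = P_1\Tht_1 = \tfrac12 S\Tht_1$, whose coefficients are functions on $\Fm E$, and does not justify the order-zero claim; but the remedy is not the column exactness you invoke, since $\Psi$ is not zero in general and you cannot ``take the potential to be zero''. Finally, $\iT\Psi = 0$ is true but plays no role in either argument.
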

\begin{proof}
It follows straightforwardly from the Lepagian condition $S\cet_0 = 0$ that $P_2\cet_0 = 0$, so that we may use the homotopy condition of Theorem~\ref{T22} to see that
\[
0 = P_2\cet_0 = P_2(d\L - \dT\Tht_1) = \Theta_1 - P_2\dT\Tht_1 = \Theta_1 - (1 - \dT P_1)\Tht_1 \, ,
\]
giving $\Tht_1 - \Theta_1 = \dT P_1 \Tht_1$ (or $\Tht_1 = \Theta_1$ if $m=1$). Thus
\[
\cet_0 - \ce_0 = (d\L - \dT\Tht_1) - (d\L - \dT\Theta_1) = - \dT^2 P_1 \Tht_1 = 0 \, .
\]
(Note that, as $d\Lambda$ is a first-order vector $1$-form, $P_2 d\Lambda = Sd\Lambda = \Theta_1$.)
\end{proof}


\subsection{The First Variation Formula}

\begin{theorem}
Let $C$ be a compact connected $m$-dimensional submanifold of $\R^m$ with boundary $\p C$, let $\g$ be an $m$-curve in $E$ whose domain contains $C$, and let $X$ be a variation field on $E$ vanishing on $\g(\p C)$ with prolongation $X^1_m$ on $\Fm E$. Then
\[
\int_C (\jbar^1\g)^* \Lie_{X^1_m} \L = \int_C (\jbar^2\g)^* i_X \ce_0 \, ;
\]
consequently $\g$ is an extremal of $\L$ precisely when $\ce_0$ vanishes along the image of $\jbar^2\g$.
\end{theorem}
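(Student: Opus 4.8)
The plan is to use Cartan's formula to turn the Lie derivative into a contraction, then to feed in the definition of $\ce_0$ so that the integrand splits into an Euler--Lagrange term and a total-derivative term; Stokes' theorem together with the vanishing of $X$ on the boundary disposes of the latter, and the fundamental lemma of the calculus of variations then yields the characterisation of extremals.

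First I would record that, since $\L = L\,d^m t$ is a vector-valued $0$-form in the first (scalar) degree, the contraction $i_{X^1_m}\L$ vanishes, so Cartan's formula gives $\Lie_{X^1_m}\L = i_{X^1_m}d\L$, where $d\L = dL\otimes d^m t \in \Omega^{1,m}_1$. The heart of the argument is then the identity
\[
\tau_{mE}^{2,1\,*}\Lie_{X^1_m}\L = i_X\ce_0 + \dT(i_X\Theta_1) \, ,
\]
which I would obtain by contracting the defining relation $\tau_{mE}^{2,1\,*}d\L = \ce_0 + \dT\Theta_1$ with the second prolongation $X^2_m$. This uses four facts: that $X^2_m$ is $\tau_{mE}^{2,1}$-related to $X^1_m$, so that $i_{X^2_m}\tau_{mE}^{2,1\,*} = \tau_{mE}^{2,1\,*}i_{X^1_m}$; the Cartan reduction above; the horizontality of $\ce_0$ over $E$, which gives $i_{X^2_m}\ce_0 = i_X\ce_0$; and the commutation $i_{X^2_m}\dT\Theta_1 = \dT(i_X\Theta_1)$, valid because $\Theta_1$ is horizontal over $E$. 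As a check, both sides equal $\bigl(X^a\,\p L/\p u^a + (d_iX^a)\,\p L/\p u^a_i\bigr)\,d^m t$ in coordinates, the classical first-variation integrand.

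Next I would pull back along $\jbar^2\g$ and integrate over $C$. Since $\tau_{mE}^{2,1}\circ\jbar^2\g = \jbar^1\g$, the left-hand side becomes $(\jbar^1\g)^*\Lie_{X^1_m}\L$, so
\[
\int_C (\jbar^1\g)^*\Lie_{X^1_m}\L = \int_C (\jbar^2\g)^* i_X\ce_0 + \int_C (\jbar^2\g)^* \dT(i_X\Theta_1) \, .
\]
The final integral is a boundary term: writing $\beta$ for the scalar $(m-1)$-form on $C$ obtained from the horizontal vector form $i_X\Theta_1 \in \Omega^{0,m-1}_1$ under $(\jbar^1\g)^*$, one has $(\jbar^2\g)^*\dT(i_X\Theta_1) = d\beta$, because a total derivative $d_j$ pulls back along a prolonged curve to the ordinary partial derivative $\p/\p t^j$ of the pulled-back coefficient on $C$. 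Hence by Stokes' theorem this integral equals $\int_{\p C}\beta$, and since the coefficients of $\beta$ are multiples of the components $X^a$ of $X$, which vanish on $\g(\p C)$, it is zero. This leaves $\int_C (\jbar^1\g)^*\Lie_{X^1_m}\L = \int_C (\jbar^2\g)^* i_X\ce_0$.

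Finally, for the characterisation of extremals, I would note that $i_X\ce_0 = X^a\,\ce_a\,d^m t$ with $\ce_a = \p L/\p u^a - d_k(\p L/\p u^a_k)$, and that because $\g$ is an immersion the components $X^a\circ\g$ range over all smooth functions on $C$ vanishing near $\p C$. The fundamental lemma of the calculus of variations then shows that the right-hand integral vanishes for every admissible $X$ precisely when $\ce_a\circ\jbar^2\g = 0$, that is, when $\ce_0$ vanishes along the image of $\jbar^2\g$. I expect the main obstacle to be the commutation identity $i_{X^2_m}\dT\Theta_1 = \dT(i_X\Theta_1)$ together with the careful bookkeeping of orders and pullbacks (first-order $\Fm E$ versus second-order $\Fmb E$): this is really the integration-by-parts step in disguise, and while it is transparent in coordinates it is the one place where the geometric structure must be handled with care.
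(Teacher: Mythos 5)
Your proposal is correct and follows essentially the same route as the paper: Cartan's formula reduces the Lie derivative to $i_{X^1_m}d\L$, the defining relation $\ce_0 = \tau_{mE}^{2,1\,*}d\L - \dT\Theta_1$ is contracted with $X^2_m$, the commutation of prolongations with total derivatives plus the horizontality of $\Theta_1$ and $\ce_0$ turns the remainder into an exact boundary term killed by Stokes' theorem, and the extremal characterisation is the fundamental lemma (which the paper simply proves inline with a bump-function argument rather than citing).
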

\begin{proof}
We note first that
\begin{align*}
\int_C (\jbar^1\g)^* \Lie_{X^1_m} \L & = \int_C (\jbar^1\g)^* i_{X^1_m} d\L \\
& = \int_C (\jbar^2\g)^* i_{X^2_m} \tau_{mE}^{2,1\,*}d\L \\
& = \int_C (\jbar^2\g)^* i_{X^2_m} \ce_0 + \int_C (\jbar^2\g)^* i_{X^2_m} \dT \Theta_1 \, ,
\end{align*}
using the definition of the Euler form $\ce_0$. But prolongations commute with basis total derivatives and $\Theta_1$ is horizontal over $E$, so that
\[
\int_C (\jbar^2\g)^* i_{X^2_m} \dT \Theta_1 = \int_C (\jbar^2\g)^* \dT i_{X^1_m} \Theta_1 = \int_C d (\jbar^1\g)^* i_X \Theta_1 = 0
\]
and we see that the second integral vanishes; thus
\[
\int_C (\jbar^1\g)^* \Lie_{X^1_m} \L = \int_C (\jbar^2\g)^* i_{X^2_m} \ce_0 = \int_C (\jbar^2\g)^* i_X \ce_0
\]
because $\ce_0$ is horizontal over $E$.

Now let $\g$ be an immersion. If $\ce_0=0$ at every point in the image of $\jbar^2\g$, then for any vector field $X$ on $E$ and any $t \in C$ we will have $\eval{(\jbar^2\g)^* i_X \ce_0}{t}=0$, so that the integral over $C$ will vanish and $\g$ will be an extremal.

If, instead, $q = j^2_0(\g\circ\tr_t)$ is some point in the image of $\jbar^2\g$ where $\eval{\ce_0}{q}$ is non-zero, then there must be a vector field $X$ on $E$ such that the vector-valued function $i_X \ce_0$ gives a strictly positive multiple of $d^m t$ when evaluated at $q$, and hence when evaluated in some neighbourhood $U$ of $q$. Let $b$ be a positive bump function on $E$ whose support lies in the interior of $U$ and which satisfies $b(q) = 1$. Then
\[
\int_C (\jbar^1\g)^* \Lie_{(bX)^1_m} \L = \int_C (\jbar^2\g)^* i_{bX} \ce_0 > 0 \, ,
\]
so that $\g$ cannot be an extremal.
\end{proof}


\subsection{Integral equivalents for $m \ge 2$}

Let $\L = L \, d^m t$ be a homogeneous Lagrangian with $m \ge 2$, and write its Hilbert equivalent $\Theta_1$ as
\[
\Theta_1 = \vt^i \otimes d^{m-1} t_i \, ;
\]
the scalar 1-forms $\vt_i$ are called the \emph{Hilbert forms} of $\L$. If $\L$ never vanishes, define the \emph{\Cth\ equivalent} $\Tht_m \in \Omega^{m,0}_1$ by
\[
\Tht_m = \frac{1}{L^{m-1}} \bigwedge_{i=1}^m \vt^i \, .
\]
\begin{theorem}
The \Cth\ equivalent $\Tht_m$ is an integral equivalent of $\L$.
\end{theorem}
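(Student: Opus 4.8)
The plan is to check the two requirements in the definition of an integral equivalent: that $\Tht_m$ is horizontal over $E$, and that $\L = \frac{(-1)^{m(m-1)/2}}{m!}\,\iT^m\Tht_m$. The first is immediate. Each Hilbert form is $\vt^i = \pd{L}{u^a_i}\,du^a$, involving only the base differentials $du^a$, so $\bigwedge_i \vt^i$ is horizontal over $E$; and since $\L$ never vanishes, the scalar factor $L^{-(m-1)}$ is a well-defined smooth function, and multiplying by it preserves horizontality.

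For the second requirement I would first isolate the purely algebraic identity
\[
\frac{(-1)^{m(m-1)/2}}{m!}\,\iT^m\bigl(\omega^1\wedge\cdots\wedge\omega^m\bigr) = \det\bigl(d_k\hook\omega^i\bigr)\,d^m t
\]
valid for any scalar $1$-forms $\omega^1,\ldots,\omega^m$ horizontal over $E$. This is exactly the content of the computation in the proof of Lemma~\ref{L25} specialised to $r=m$: each of the $m$ applications of $\iT$ contracts one total derivative into the form part and appends a factor $dt^k$, the alternating sum produced by the wedge collects into a determinant, and the normalising constant $(-1)^{m(m-1)/2}/m!$ is precisely what cancels the resulting combinatorial factors (Cauchy--Binet identifies the minor sum with $\det(d_k\hook\omega^i)$). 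Because every intermediate form remains horizontal over $E$, only the $u^a_k\,\p/\p u^a$ part of each total derivative contributes, so the computation reduces to first order and $d_k\hook\vt^i = u^a_k\,\pd{L}{u^a_i}$. Applying the identity with $\omega^i=\vt^i$ and extracting the factor $L^{-(m-1)}$ gives
\[
\frac{(-1)^{m(m-1)/2}}{m!}\,\iT^m\Tht_m = \frac{1}{L^{m-1}}\det\Bigl(u^a_k\,\pd{L}{u^a_i}\Bigr)\,d^m t.
\]

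The decisive step is then to evaluate this determinant using homogeneity. The matrix entry $u^a_k\,\pd{L}{u^a_i}$ is exactly the Lie derivative $d^i_k L$ along the basis field $\D^i_k = u^a_k\,\p/\p u^a_i$, and the homogeneity condition $d^i_j L = \d^i_j L$ states precisely that this matrix equals $L\,\d^i_k$, i.e. $L$ times the $m\times m$ identity. Its determinant is therefore $L^m$, and
\[
\frac{(-1)^{m(m-1)/2}}{m!}\,\iT^m\Tht_m = \frac{1}{L^{m-1}}\,L^m\,d^m t = L\,d^m t = \L.
\]
I expect the only genuine obstacle to be bookkeeping the sign and the $1/m!$ in the determinant identity of the middle step; once homogeneity is seen to collapse $\bigl(u^a_k\,\pd{L}{u^a_i}\bigr)$ to a scalar multiple of the identity, the conclusion is forced.
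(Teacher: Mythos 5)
Your proof is correct, and it rests on exactly the same key fact as the paper's --- the homogeneity identity $d_k\hook\vt^i=\d^i_k L$, proved just as for $\iT\Theta_1=m\L$ --- but you organise the computation differently. The paper runs an induction on $s$ for $\iT^s\Tht_m$, substituting $d_j\hook\vt^i=\d^i_jL$ at every step, so the powers of $L$ are peeled off one at a time against the prefactor $L^{-(m-1)}$ and the permutation sums and signs $(-1)^{s(2m-s-1)/2}$ have to be tracked through each stage. You instead isolate a homogeneity-free algebraic identity, $\tfrac{(-1)^{m(m-1)/2}}{m!}\,\iT^m(\omega^1\wedge\cdots\wedge\omega^m)=\det(d_k\hook\omega^i)\,d^mt$ for scalar $1$-forms horizontal over $E$ (which is indeed the Lemma~\ref{L25} computation in coefficient form, with Cauchy--Binet converting the sum over minors into the single determinant $\det(u^a_k\omega^i_a)$), and then apply homogeneity exactly once to see that the matrix $(d_k\hook\vt^i)$ is $L$ times the identity, so its determinant is $L^m$. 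The trade-off is clear: your factorisation makes the role of homogeneity transparent and reduces the verification to a one-line determinant evaluation, at the cost of outsourcing the combinatorial bookkeeping to the general identity; the paper's induction is self-contained but interleaves the homogeneity substitution with the sign-chasing. Both the identification $\vt^i=\pd{L}{u^a_i}\,du^a$ and the horizontality claim are right, and your $m=1,2$ normalisations of the identity check out, so there is no gap.
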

\begin{proof}
We must show that $\iT^m \Theta_m = (-1)^{m(m-1)/2} \, m!\L$, so rewrite $\Theta_m$ as
\[
\Theta_m = \frac{1}{m! L^{m-1}} \sum_{\sigma \in \Sg_m} (-1)^\sigma \vt^{\sigma(1)} \wedge \cdots \wedge \vt^{\sigma(m)} \, ,
\]
where $\Sg_m$ is the permutation group, and use induction. The calculation uses $d_j \hook \vt^i = \d^i_j L$, the proof of which is similar to that used to show that $\iT \Theta_1 = m\L$; we also define $\tau_{r,s} \in \Sg_m$ by
\[
\tau_{r,s}(i) =
\begin{cases}
m-s & (i = r) \\
i-1 & (r+1 \le i \le m-s) \\
i & \text{otherwise} \, .
\end{cases}
\]
Now
\begin{align*}
\lefteqn{\iT \bigg( \sum_{\sigma \in \Sg_m} (-1)^\sigma 
\vt^{\sigma(1)} \wedge \cdots \wedge \vt^{\sigma(m-s)}
\otimes dt^{\sigma(m-s+1)} \wedge \cdots \wedge dt^{\sigma(m)} \bigg)} \\
& = \sum_{\sigma \in \Sg_m} (-1)^\sigma 
d_j \hook \big( \vt^{\sigma(1)} \wedge \cdots \wedge \vt^{\sigma(m-s)} \big)
\otimes dt^j \wedge dt^{\sigma(m-s+1)} \wedge \cdots \wedge dt^{\sigma(m)} \\
& = \sum_{r=1}^{m-s} \sum_{\sigma \in \Sg_m} (-1)^\sigma (-1)^{r-1}
\big( \vt^{\sigma(1)} \wedge \cdots \wedge (d_j \hook \vt^{\sigma(r)}) \wedge \cdots \wedge \vt^{\sigma(m-s)} \big) \otimes \\*
& \qquad\qquad \otimes dt^j \wedge dt^{\sigma(m-s+1)} \wedge \cdots \wedge dt^{\sigma(m)} \\
& = L \sum_{r=1}^{m-s} \sum_{\sigma \in \Sg_m} (-1)^\sigma (-1)^{r-1}
\big( \vt^{\sigma(1)} \wedge \cdots \wedge \vt^{\sigma(r-1)} \wedge \vt^{\sigma(r+1)}
\wedge \cdots \wedge \vt^{\sigma(m-s)} \big) \otimes \\*
& \qquad\qquad \otimes dt^{\sigma(r)} \wedge dt^{\sigma(m-s+1)} \wedge \cdots \wedge dt^{\sigma(m)} \\
& = L \sum_{r=1}^{m-s} \sum_{\sigma \in \Sg_m} (-1)^\sigma (-1)^{r-1} (-1)^{m-r-s} \bigg\{ \\*
& \qquad \big( \vt^{\sigma\tau_{r,s}(1)} \wedge \cdots \wedge \vt^{\sigma\tau_{r,s}(r-1)} \wedge \vt^{\sigma\tau_{r,s}(r+1)}
\wedge \cdots \wedge \vt^{\sigma\tau_{r,s}(m-s)} \big) \otimes \\*
& \qquad\qquad \otimes dt^{\sigma\tau_{r,s}(r)} \wedge dt^{\sigma\tau_{r,s}(m-s+1)} 
\wedge \cdots \wedge dt^{\sigma\tau_{r,s}(m)} \bigg\} \\ 
& = (-1)^{m-s-1}L \sum_{r=1}^{m-s} \sum_{\sigma \in \Sg_m} (-1)^\sigma 
\big( \vt^{\sigma(1)} \wedge \cdots \wedge \vt^{\sigma(m-s-1)} \big) \otimes \\*
& \qquad\qquad \otimes dt^{\sigma(m-s)} \wedge dt^{\sigma(m-s+1)} \wedge \cdots \wedge dt^{\sigma(m)} \\
& = (-1)^{m-s-1}(m-s)L \sum_{\sigma \in \Sg_m} (-1)^\sigma 
\big( \vt^{\sigma(1)} \wedge \cdots \wedge \vt^{\sigma(m-s-1)} \big) \otimes \\*
& \qquad\qquad \otimes dt^{\sigma(m-s)} \wedge dt^{\sigma(m-s+1)} \wedge \cdots \wedge dt^{\sigma(m)} \, ,
\end{align*}
so if
\begin{align*}
\iT^s \Theta_m & = \frac{(-1)^{s(2m-s-1)/2}}{(m-s)!L^{m-s-1}} \bigg\{ \\*
& \qquad \sum_{\sigma \in \Sg_m} (-1)^\sigma 
\vt^{\sigma(1)} \wedge \cdots \wedge \vt^{\sigma(m-s)}
\otimes dt^{\sigma(m-s+1)} \wedge \cdots \wedge dt^{\sigma(m)} \bigg\}
\end{align*}
then
\begin{align*}
\iT^{s+1} \Theta_m & = \frac{(-1)^{s(2m-s-1)/2}}{(m-s)!L^{m-s-1}} \bigg\{ \\*
& \qquad  (-1)^{m-s-1}(m-s)L \sum_{\sigma \in \Sg_m} (-1)^\sigma 
\big( \vt^{\sigma(1)} \wedge \cdots \wedge \vt^{\sigma(m-s-1)} \big) \otimes \\*
& \qquad\qquad \otimes dt^{\sigma(m-s)} \wedge dt^{\sigma(m-s+1)} \wedge \cdots \wedge dt^{\sigma(m)} \bigg\} \\
& = \frac{(-1)^{(s+1)(2m-s-2)/2}}{(m-s-1)!L^{m-s-2}} \sum_{\sigma \in \Sg_m} (-1)^\sigma 
\big( \vt^{\sigma(1)} \wedge \cdots \wedge \vt^{\sigma(m-s-1)} \big) \otimes \\*
& \qquad\qquad \otimes dt^{\sigma(m-s)} \wedge dt^{\sigma(m-s+1)} \wedge \cdots \wedge dt^{\sigma(m)} 
\end{align*}
as required. Hence
\begin{align*}
\iT^m \Theta_m & = \frac{(-1)^{m(m-1)/2}}{L^{-1}}
\sum_{\sigma \in \Sg_m} (-1)^\sigma dt^{\sigma(1)} \wedge \cdots \wedge dt^{\sigma(m)} \\
& = (-1)^{m(m-1)/2} \, m! L \, dt^1 \wedge \cdots \wedge dt^m \\
& = (-1)^{m(m-1)/2} \, m! \L \, .
\end{align*}
\end{proof}
We see also from the induction formula that
\[
\iT^{m-1} \Theta_m = (-1)^{m(m-1)/2} \, (m-1)! \Theta_1
\]
where $\Theta_1$ is the Hilbert equivalent; consequently $\Theta_m$ is Lepagian. Then, as $d\Theta_m = \ce_m$,
\begin{align*}
\int_C (j^1\g)^* \Lie_{X^1_m} \Theta_m & = \int_C (j^1\g)^* i_{X^1_m} \ce_m \\
& = (-1)^{m(m-1)/2}m! \int_C (j^2\g)^* i_X \ce_0 
\end{align*}
for any vector field $X$ on $E$ vanishing on $\g(\p C)$, because contractions by vector fields anticommute, so that $\iT^m i_{X^1_m} \ce_m = (-1)^m i_{X^1_m} \iT^m \ce_m$.


\subsection{Another integral equivalent}

When $m=1$ then the only Lepagian integral equivalent of a Lagrangian is the Hilbert equivalent. But when $m>1$ there may be other integral equivalents. Put
\[
\Theta_{r+1} = \frac{(-1)^r}{(r+1)^2} Sd\Theta_r \qquad (1 \le r < m)
\]
where, as usual, $\Theta_0 = \Lambda$.
\begin{lem}
Each $\Theta_r$ is a first-order vector form, an element of $\Omega^{r,m-r}_1$, horizontal over $E$.
\end{lem}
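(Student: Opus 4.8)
The plan is to induct on $r$, carrying three assertions at once: membership in $\Omega^{r,m-r}_1$, first-order character, and horizontality over $E$. The base case $r=0$ is immediate, since $\Theta_0 = \L = L\,d^m t$ is the function $L$ on $\Fm E$ tensored with $d^m t \in \bw^m\R^{m*}$; this lies in $\Omega^{0,m}_1$, is first-order, and is horizontal over $E$ vacuously (a $0$-form carries no differential factors). As a check, the case $r=0$ of the recurrence gives $\Theta_1 = Sd\L$, which is the Hilbert equivalent already treated above.

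For the inductive step, I assume $\Theta_r \in \Omega^{r,m-r}_1$ is first-order and horizontal over $E$, and consider $\Theta_{r+1} = \frac{(-1)^r}{(r+1)^2}Sd\Theta_r$. First I would dispose of the bidegree and weight by bookkeeping. The exterior derivative $d$ raises the form degree by one and changes nothing else, so $d\Theta_r \in \Omega^{r+1,m-r}_1$; the operator $S$, acting by $S(\chi\otimes\omega) = S^j\chi \otimes (\vf{t^j}\hook\omega)$, lowers the exterior-power index by one and preserves the form degree and weight, so $Sd\Theta_r \in \Omega^{r+1,m-r-1}_1 = \Omega^{r+1,m-(r+1)}_1$. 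The scalar factor plays no role here. First-order character is automatic, because both $d$ and each vertical endomorphism $S^j$ are operations intrinsic to $\Fm E$ and so send forms on $\Fm E$ to forms on $\Fm E$; the only operator that would raise the weight, $\dT$, does not occur in the defining formula.

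The one substantive point is horizontality, which I would obtain from the derivation property of $S^j$ together with the inductive hypothesis. Because $\Theta_r$ is horizontal over $E$, in local coordinates its scalar part is a wedge of base differentials $du^a$ with function coefficients $c$ on $\Fm E$. Then $d\Theta_r$ acquires at most a single non-base differential: since $dc = \pd{c}{u^b}\,du^b + \pd{c}{u^b_i}\,du^b_i$ and $d(du^a)=0$, the form $d\Theta_r$ splits into a part all of whose factors are base differentials and a part carrying exactly one factor $du^b_i$. Recalling $S^j du^a = 0$, $S^j du^a_i = \d^j_i\,du^a$, and that $S^j$ is a degree-zero derivation, every $S^j$ annihilates the first part, while in the second part it replaces the unique factor $du^b_i$ by $\d^j_i\,du^b$ and touches nothing else. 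Hence $Sd\Theta_r$ has only base differentials, i.e.\ is horizontal over $E$, and so is $\Theta_{r+1}$; this closes the induction.

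I expect the bidegree and first-order claims to be pure bookkeeping, and the only step requiring genuine care to be the horizontality argument, whose essence is the observation that $d$ applied to an $E$-horizontal form produces at most one velocity differential $du^b_i$, which is precisely the ingredient $S^j$ is built to convert back into a base differential $du^b$.
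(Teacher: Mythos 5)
Your proof is correct and follows essentially the same route as the paper: an induction in which the first-order and bidegree claims are bookkeeping (neither $S$ nor $d$ raises the order), and the horizontality of $\Theta_{r+1}$ comes from the fact that $d$ applied to an $E$-horizontal form introduces at most one velocity differential, which $S^j$ converts back into a base differential. The paper states this last point invariantly (contracting $d\Theta_r$ with a vector field vertical over $E$ yields a horizontal form), whereas you verify it in coordinates, but the underlying idea is identical.
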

\begin{proof}
Each $\Theta_r$ is first-order because neither $S$ nor $d$ increases the order of a vector form. By definition $\Theta_0$ is horizontal over $E$, and if $\Theta_r$ is horizontal over $E$ then the contraction of $d\Theta_r$ with any vector field on $\Fm E$ vertical over $E$ will again be horizontal over $E$; thus $\Theta_{r+1}$ will also be horizontal over $E$.
\end{proof}
\begin{theorem}
The scalar $m$-form $\Theta_m$ is a Lepagian integral equivalent of $\L$ called the \emph{fundamental equivalent} of $\L$.
\end{theorem}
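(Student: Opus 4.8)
The plan is to establish two properties: that $\Theta_m$ is an integral equivalent of $\L$, namely $\L=\tfrac{(-1)^{m(m-1)/2}}{m!}\,\iT^m\Theta_m$, and that it is Lepagian, namely that its Euler form $\ce_0$ satisfies $S\ce_0=0$. The preceding lemma already gives that $\Theta_m\in\Omega^{m,0}_1$ and is horizontal over $E$, so $\Theta_m$ is of the right type and only the two displayed properties remain to be checked.

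Both rest on a single recurrence, which I shall call the key identity:
\[
\iT\Theta_{r+1}=(-1)^r(m-r)\,\Theta_r\qquad(0\le r\le m-1).
\]
Since $\Theta_{r+1}=\tfrac{(-1)^r}{(r+1)^2}\,Sd\Theta_r$, this is equivalent to $\iT Sd\Theta_r=(r+1)^2(m-r)\,\Theta_r$, understood to within the pullback $\tau_{mE}^{2,1\,*}$. Granting it, the integral-equivalent property follows by telescoping: applying $\iT$ a total of $m$ times,
\[
\iT^m\Theta_m=\Big(\prod_{r=0}^{m-1}(-1)^r(m-r)\Big)\Theta_0=(-1)^{m(m-1)/2}\,m!\,\L,
\]
using $\prod_{r=0}^{m-1}(m-r)=m!$, $\sum_{r=0}^{m-1}r=m(m-1)/2$ and $\Theta_0=\L$; dividing gives exactly the defining relation for an integral equivalent.

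For the Lepagian property I would use that the recurrence at $r=0$ reads $\Theta_1=Sd\L$, which is precisely the Hilbert equivalent. Since the Euler form $\ce_0=\tau_{mE}^{2,1\,*}d\L-\dT\Theta_1$ depends only on $\L$ and on $\Theta_1$, it is identical to the Euler form computed for the Hilbert equivalent, and that form was already shown to satisfy $S\ce_0=0$; hence $\Theta_m$ is Lepagian. The key identity also guarantees that the intermediate equivalents obtained by descending from $\Theta_m$ through $\tfrac{(-1)^r}{m-r}\iT\Theta_{r+1}$ coincide with the ascending $\Theta_r$, so the whole chain, and in particular $\Theta_1$, is unambiguous.

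The main obstacle is the key identity, and above all the bookkeeping of the constant $(r+1)^2(m-r)$. I would argue by induction on $r$. The base case $r=0$ is the computation already used for the Hilbert equivalent, $\iT Sd\L=(i_j S^jdL)\,d^m t=(d^j_jL)\,d^m t=m\L$, where homogeneity enters through $d^j_jL=mL$. For the inductive step I would commute $\iT$ past $S$ on $d\Theta_r$: from $\iT S(\chi\otimes\omega)=(d_k\hook S^j\chi)\otimes(dt^k\wedge(\vf{t^j}\hook\omega))$ and the algebraic relation $d_k\hook S^j-S^jd_k\hook=i_{\D^j_k}$ (with $\D^j_k=u^a_k\,\p/\p u^a_j$), one obtains a trace term of the form $S^jd_j\hook(\cdot)$ together with a term contracted by the jet-group generators $\D^j_k$. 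Because $\Theta_r$ inherits equivariance from the homogeneity of $\L$, the infinitesimal equivariance condition collapses the generator term to a numerical multiple, while the single commutator $S^id_j-d_jS^i$ that arises is controlled exactly by Lemma~\ref{L21}, and the vanishing $S^iS^jd\Theta_r=0$ (valid since $\Theta_r$ is first-order and horizontal, so each term of $d\Theta_r$ carries at most one factor $du^a_i$) removes the unwanted contributions. Combining these traces with the combinatorial factors produced by the wedge and interior products is what yields $(r+1)^2(m-r)$; keeping the signs and numerical factors aligned so that they multiply to exactly this constant is the delicate part of the whole argument.
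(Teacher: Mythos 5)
Your route is genuinely different from the paper's. The paper proves by induction the closed coordinate formula
\[
\Theta_r = \frac{1}{(r!)^2}\,\frac{\p^r L}{\p u^{a_1}_{i_1}\cdots\p u^{a_r}_{i_r}}\,du^{a_1}\wedge\cdots\wedge du^{a_r}\otimes\biggl(\vf{t^{i_r}}\hook\cdots\hook\vf{t^{i_1}}\hook d^mt\biggr)\,,
\]
specialises it to $\Theta_m=\tfrac{1}{m!}\,(\p^mL/\p u^{a_1}_1\cdots\p u^{a_m}_m)\,du^{a_1}\wedge\cdots\wedge du^{a_m}$, and then evaluates $\iT^m\Theta_m$ in a single stroke using the determinant computation of Lemma~\ref{L25} together with the $m$-fold contracted homogeneity identity $(\p^mL/\p u^{a_1}_1\cdots\p u^{a_m}_m)\det(u^{a_i}_{k_j})=m!\,L$. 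Your recurrence $\iT\Theta_{r+1}=(-1)^r(m-r)\Theta_r$ is true --- for $m=2$, $r=1$ it is exactly the once-differentiated Euler identity $u^a_k\,\p^2L/\p u^a_i\p u^b_j=\d^i_k\,\p L/\p u^b_j-\d^j_k\,\p L/\p u^b_i$, and in general the $r$-fold analogue --- it telescopes to the correct constant, and it buys something the paper leaves implicit: it shows that the descending chain $\Theta_r=\tfrac{(-1)^r}{m-r}\iT\Theta_{r+1}$ lands on the Hilbert equivalent $Sd\L$ at level one, which is what actually justifies the adjective \emph{Lepagian}. (The paper's own proof stops at $\iT^m\Theta_m=(-1)^{m(m-1)/2}m!\,\L$ and never returns to that clause, so on this point your write-up is the more complete one.)

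The gap is that the recurrence is only established for $r=0$; for $r\ge 1$ you give a sketch and explicitly defer the ``delicate part'', which is essentially the whole content of the inductive step. Two things in particular are missing. First, the generator term: $i_{\D^j_k}$ applied to $d\Theta_r$ is not the infinitesimal equivariance condition, which involves the Lie derivative $d^j_k$ rather than the contraction; you need $i_{\D^j_k}\Theta_r=0$ (true, since $\Theta_r$ is horizontal over $E$ and $\D^j_k$ is $\tau_{mE}$-vertical) plus Cartan's formula to convert it into $d^j_k\Theta_r$, and you then need $\Theta_r$ itself to be equivariant --- a separate induction you have not set up, since equivariance of $\L$ must be propagated through $S$ and $d$, and for $\Theta_r\in\Omega^{r,m-r}_1$ the condition involves the $\Lie_{t^j\p/\p t^k}$ action on the $dt$ factors, not merely $d^j_k\chi=\d^j_k\chi$. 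Second, the constant $(r+1)^2(m-r)$ has to emerge from the $r+1$ traces produced by contracting $d_k$ into an $(r+1)$-fold wedge of $du$'s combined with the $\d^j_k$ terms from Lemma~\ref{L21}, and nothing in the sketch pins it down; an error by a factor of $r+1$ here would silently destroy the telescoping. The cheapest repair is the paper's own device: prove the displayed coordinate formula for $\Theta_r$ by induction (each step is one mechanical application of $S^jd$), after which your recurrence reduces to the $r$-fold differentiated Euler identity and the theorem follows.
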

\begin{proof}
We first show that, in coordinates,
\[
\Theta_r = \frac{1}{(r!)^2} \pdx{r}{L}{u^{a_1}_{i_1}}{u^{a_r}_{i_r}} du^{a_1} \wedge \cdots \wedge du^{a_r}
\otimes \biggl( \vf{t^{i_r}} \hook \cdots \hook \vf{t^1} \hook d^m t \biggr) \, .
\]
This formula clearly holds for $r=1$ (and, indeed, for $r=0$); so suppose that it holds for a given value of $r$. Then
\begin{align*}
\Theta_{r+1} & = \frac{(-1)^r}{(r+1)^2} Sd\Theta_r \\
& = \frac{(-1)^r}{(r+1)^2} \frac{1}{(r!)^2} S^j \biggl( \pdxa{r+1}{L}{u^{a_1}_{i_1}}{u^{a_r}_{i_r}}{u^{a_{r+1}}_{i_{r+1}}} du^{a_{r+1}}_{i_{r+1}} + \cdots \biggr) \wedge \\*
& \qquad \wedge du^{a_1} \wedge \cdots \wedge du^{a_r} 
\otimes \biggl( \vf{t^j} \hook \vf{t^{i_r}} \hook \cdots \hook \vf{t^1} \hook d^m t \biggr) \\
& = \frac{1}{((r+1)!)^2} \pdx{r+1}{L}{u^{a_1}_{i_1}}{u^{a_{r+1}}_{i_{r+1}}}
du^{a_1} \wedge \cdots \wedge du^{a_{r+1}} \otimes \\*
& \qquad \otimes \biggl( \vf{t^{i_{r+1}}} \hook \cdots \hook \vf{t^1} \hook d^m t \biggr)
\end{align*}
so that the formula also holds for the case $r+1$. In particular, therefore, we have
\begin{align*}
\Theta_m & = \frac{1}{(m!)^2} \pdx{m}{L}{u^{a_1}_{i_1}}{u^{a_m}_{i_m}} du^{a_1} \wedge \cdots \wedge du^{a_m}
\times \biggl( \vf{t^{i_m}} \hook \cdots \hook \vf{t^1} \hook d^m t \biggr) \\
& = \frac{1}{(m!)^2} \pdx{m}{L}{u^{a_1}_{i_1}}{u^{a_m}_{i_m}} du^{a_1} \wedge \cdots \wedge du^{a_m} \times
\begin{vmatrix}
\d^1_{i_1} & \cdots & \d^1_{i_m} \\
\vdots & & \vdots \\
\d^m_{i_1} & \cdots & \d^m_{i_m}
\end{vmatrix} \\
& = \frac{1}{m!} \pdx{m}{L}{u^{a_1}_1}{u^{a_m}_m} du^{a_1} \wedge \cdots \wedge du^{a_m} \, .
\end{align*}
Thus, using the calculation in the proof of Lemma~\ref{L25},
\begin{align*}
\iT^m \Theta_m & = m! (-1)^{m(m-1)/2} \biggl( \frac{1}{m!} \pdx{m}{L}{u^{a_1}_1}{u^{a_m}_m}
\det \bigl( u^{a_i}_{k_j} \bigr) \biggr) dt^1 \wedge \cdots \wedge dt^m \\
& = (-1)^{m(m-1)/2} \pdx{m}{L}{u^{a_1}_1}{u^{a_m}_m} \det \bigl( u^{a_i}_{k_j} \bigr) dt^1 \wedge \cdots \wedge dt^m \\
& = (-1)^{m(m-1)/2} m! \, L \, dt^1 \wedge \cdots \wedge dt^m \\
& = (-1)^{m(m-1)/2} m! \, \Lambda \, .
\end{align*}
\end{proof}
\begin{theorem}
The fundamental equivalent $\Theta_m$ of a homogeneous Lagrangian $\Lambda$ has the property that $d\Theta_m = \ce_m = 0$ if, and only if, $\ce_0 = 0$.
\end{theorem}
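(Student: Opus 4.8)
One implication is already in hand. By Lemma~\ref{L26} the Euler forms obey $\ce_r = \frac{(-1)^{r+1}}{m-r}\iT\ce_{r+1}$, so iterating from $r=m-1$ down to $r=0$ writes $\ce_0$ as a nonzero constant multiple of $\iT^m\ce_m$; hence $\ce_m=0$ forces $\ce_0=0$. This is exactly the implication recorded after Lemma~\ref{L26}, so the whole content of the theorem is the converse, $\ce_0=0\Rightarrow\ce_m=0$.

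For the converse I would read both conditions as identities of forms and argue in coordinates. From the computation in the bottom left corner, $\ce_0 = E_a\,du^a\otimes d^m t$ with $E_a = \pd{L}{u^a} - d_k\bigl(\pd{L}{u^a_k}\bigr)$, a function on $\Fmb E$. The crucial point is that $E_a$ is \emph{affine} in the genuinely second-order fibre coordinates $u^c_{kl}$: expanding the total derivative gives
\[
E_a = \Bigl(\pd{L}{u^a} - u^c_k\,\frac{\partial^2 L}{\partial u^c\,\partial u^a_k}\Bigr) - u^c_{kl}\,\frac{\partial^2 L}{\partial u^c_l\,\partial u^a_k},
\]
and both coefficients are first order. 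Thus the single identity $\ce_0=0$ splits: the $u^c_{kl}$-free part gives a first-order identity, while the coefficient of each independent $u^c_{kl}$ yields the velocity-Hessian relations $\frac{\partial^2 L}{\partial u^c_l\,\partial u^a_k} + \frac{\partial^2 L}{\partial u^c_k\,\partial u^a_l} = 0$.

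I would then substitute these into $\ce_m = d\Theta_m$, using the explicit formula $\Theta_m = \frac{1}{m!}\,\frac{\partial^m L}{\partial u^{a_1}_1\cdots\partial u^{a_m}_m}\,du^{a_1}\wedge\cdots\wedge du^{a_m}$ established in the previous theorem. Since $\Theta_m$ is horizontal over $E$, its differential has just two bidegree components, which must vanish separately: a once-vertical part carrying a factor $du^b_k$ with coefficient $\frac{\partial^{m+1}L}{\partial u^b_k\,\partial u^{a_1}_1\cdots\partial u^{a_m}_m}$, and a horizontal part with coefficient $\frac{\partial^{m+1}L}{\partial u^b\,\partial u^{a_1}_1\cdots\partial u^{a_m}_m}$. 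The velocity-Hessian antisymmetry, differentiated $m-1$ further times, is what should kill the once-vertical part after the antisymmetrisation imposed by $du^b_k\wedge du^{a_1}\wedge\cdots\wedge du^{a_m}$; the case $m=1$, where $E_a=0$ already forces $\frac{\partial^2 L}{\partial u^c_1\,\partial u^a_1}=0$ and hence closedness of the Hilbert form, is the prototype. The remaining first-order identity, together with the homogeneity relation $u^b_k\,\frac{\partial^{m+1}L}{\partial u^b_k\,\partial u^{a_1}_1\cdots\partial u^{a_m}_m}=0$ coming from $d^i_jL=\d^i_jL$, should then dispose of the horizontal part.

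The main obstacle is precisely this last combinatorial step: showing that the \emph{identically} satisfied second-order equation $\ce_0=0$ delivers exactly the first-order relations among the $(m+1)$-st velocity derivatives of $L$ needed to annihilate both components of $d\Theta_m$. The structural reason the argument can work at all --- even though the recurrence of Lemma~\ref{L26} runs the wrong way and $\iT$ is not injective in general --- is that $\ce_0=0$ is an identity on the second-order manifold, so its pure second-order coefficients become first-order constraints on $L$, and homogeneity then ties those constraints to the derivatives of the coefficient of $\Theta_m$.
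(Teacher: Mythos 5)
Your opening paragraph matches the paper: the implication $\ce_m=0\Rightarrow\ce_0=0$ is exactly what is recorded after Lemma~\ref{L26}, so the content of the theorem is indeed the converse. For that converse, however, what you give is a programme with its hardest step left open, and you say as much: the two claims that carry all the weight --- that the relations $\frac{\partial^2 L}{\partial u^c_l\,\partial u^a_k}+\frac{\partial^2 L}{\partial u^c_k\,\partial u^a_l}=0$ annihilate the $du^b_k$-component of $d\Theta_m$, and that the remaining first-order identity plus homogeneity annihilates the horizontal component --- appear only as ``should''. That is a genuine gap, not a detail. The route can in fact be completed, though not quite by the mechanism you name: setting $l=k$ in the Hessian relation gives $\frac{\partial^2 L}{\partial u^c_k\,\partial u^a_k}=0$ (no sum), and since this holds identically all its further derivatives vanish as well; the coefficient $\frac{\partial^{m+1}L}{\partial u^b_k\,\partial u^{a_1}_1\cdots\partial u^{a_m}_m}$ always contains the repeated counting index $k$ (in the slots $u^b_k$ and $u^{a_k}_k$) and so vanishes outright, before any antisymmetrisation over base indices. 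For the horizontal component one differentiates the first-order identity $\pd{L}{u^a}=u^c_k\frac{\partial^2 L}{\partial u^c\,\partial u^a_k}$ by $u^{a_1}_1,\ldots,u^{a_m}_m$; the term in which every derivative hits the Hessian factor dies by the same repeated-index observation, and the surviving identity expresses the coefficient as a sum of $m$ transposed copies of itself, so its full antisymmetrisation $\tilde{T}$ satisfies $(m+1)\tilde{T}=0$ and the wedge product kills it. None of this is in your text, and it is exactly the step you flag as the obstacle.

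The paper takes a different and much shorter route, and it is worth seeing why. Because the fundamental equivalent is built by iterating the homotopy operator --- $\Theta_{r+1}=(-1)^r P_1\,d\Theta_r$, once one observes that $\frac{1}{(r+1)^2}S^j$ is exactly the operator $P_1$ of Theorem~\ref{T22} acting on $\Omega^{r+1,m-r}_1$ --- the homotopy formula turns the defining relation $\ce_r=\dt\Theta_r-(-1)^r\dT\Theta_{r+1}$ into the reverse recurrence $\ce_{r+1}=(-1)^{r+1}P_2\,d\ce_r$, together with $P_2\,d\ce_{m-1}=(-1)^m\ce_m$, so that $\ce_0=0$ propagates to $\ce_m=0$ by a one-line induction with no coordinate computation at all. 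This is precisely the structural fact that makes the ``wrong-way'' direction tractable for the fundamental equivalent specifically, where Lemma~\ref{L26} and the non-injectivity of $\iT$ give no purchase. Your coordinate approach is salvageable, but only with the supplementary combinatorial arguments sketched above, which your proposal identifies rather than supplies.
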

\begin{proof}
If $\ce_m = 0$ then $\ce_0 = 0$ by the recurrence relation of Lemma~\ref{L26}. So show the converse, we use the definition
\[
\Theta_{r+1} = \frac{(-1)^r}{(r+1)^2} Sd\Theta_r 
\]
and the fact that $d\Theta_r \in \Omega^{r+1,m-r}_1$ to see that the homotopy operator $P_1$ from Theorem~\ref{T22} takes the form
\[
P_1 \bigl( \chi_{i_1 \cdots i_{m-r}} \otimes dt^{i_1} \wedge \cdots \wedge dt^{i_{m-r}} \bigr)
= \frac{1}{(r+1)^2} S^j \chi_{i_1 \cdots i_{m-r}} \otimes \biggl( \vf{t^j} \hook dt^{i_1} \wedge \cdots \wedge dt^{i_{m-r}} \biggr)
\]
(the formula in the proof of Theorem~\ref{T22} was for an element of $\Omega^{r,s+1}_1$); thus we may rewrite the definition of $\Theta_{r+1}$ as
\[
\Theta_{r+1} = (-1)^r Pd\Theta_r \, .
\]
Now from
\[
\ce_r = d\Theta_r - (-1)^r \dT\Theta_{r+1}
\]
we obtain
\begin{align*}
P_2 d\ce_r & = - (-1)^r P_2 \dT d\Theta_{r+1} \\
& = (-1)^r (\dT P_1 d\Theta_{r+1} - d\Theta_{r+1})
\intertext{so that}
(-1)^{r+1} P_2 d\ce_r & = d\Theta_{r+1} - \dT P_1 d\Theta_{r+1}
\end{align*}
using the homotopy formula of Theorem~\ref{T22}; but
\begin{align*}
\ce_{r+1} & = d\Theta_{r+1} - (-1)^{r+1} \dT\Theta_{r+2} \\
& = d\Theta_{r+1} - \dT P_1 d\Theta_{r+1}
\end{align*}
so that
\[
\ce_{r+1} = (-1)^{r+1} P_2 d\ce_r \, .
\]
Similarly,
\begin{align*}
P_2 d\ce_{m-1} & = - (-1)^{m-1} P_2 \dT d\Theta_m \\
& = (-1)^m d\Theta_m \\
& = (-1)^m \ce_m \, .
\end{align*}
It follows that if $\ce_0 = 0$ then $\ce_m = 0$.
\end{proof}


\bigskip\bigskip\bigskip
\noindent
Department of Mathematics, Faculty of Science\\
The University of Ostrava\\
30.\ dubna 22\\
701 03 Ostrava\\
Czech Republic

\bigskip
\noindent
Email: \url{david@symplectic.demon.co.uk}

\end{document}